\newcommand{\mathhl}[1]{\colorbox{cyan}{$\ds #1$}}
\renewcommand{\hl}[1]{#1} 
\renewcommand{\mathhl}[1]{#1}
\numberwithin{equation}{section}
\newcommand{\nm}{\noalign{\smallskip}}
\newcommand{\beq}{\begin{equation}}
\newcommand{\eeq}{\end{equation}}
\newcommand{\eqnref}[1]{(\ref {#1})}
\newcommand{\ds}{\displaystyle}
\newcommand{\pf}{\noindent {\sl Proof}. \ }
\newcommand{\p}{\partial}
\newcommand{\pd}[2]{\frac {\p #1}{\p #2}}
\newcommand{\ie}{\textit{i.e.}}
\newtheorem{theorem}{Theorem}[section]
\newtheorem{lemma}[theorem]{Lemma}
\newtheorem{prop}[theorem]{Proposition}
\newtheorem{remark}{Remark}
\newtheorem{asump}{Assumption}[section]
\newcommand{\R}{\mathbb{R}}
\newcommand{\Bx}{{x}}
\newcommand{\By}{{y}}
\newcommand{\C}{\mathcal{C}}
\newcommand{\Scal}{\mathcal{S}}
\newcommand{\Kcal}{\mathcal{K}}
\newcommand{\Acal}{\mathcal{A}}
\newcommand{\B}{\mathcal{B}}
\newcommand{\iu}{\mathrm{i}\mkern1mu}
\newcommand{\dx}{\: \mathrm{d}}
\title{Honeycomb-lattice Minnaert bubbles\thanks{\footnotesize The work of Hyundae Lee was supported by National Research Fund of Korea (NRF-2015R1D1A1A01059357, NRF-2018R1D1A1B07042678).}}
\author{
Habib Ammari\thanks{\footnotesize Department of Mathematics, 
	ETH Z\"urich, 
	R\"amistrasse 101, CH-8092 Z\"urich, Switzerland (habib.ammari@math.ethz.ch, brian.fitzpatrick@sam.math.ethz.ch, erik.orvehed.hiltunen@sam.math.ethz.ch, sanghyeon.yu@sam.math.ethz.ch).} \and Brian Fitzpatrick\footnotemark[2] \and Erik Orvehed Hiltunen\footnotemark[2] \and  Hyundae Lee\thanks{\footnotesize  Department of Mathematics, Inha University,  253 Yonghyun-dong Nam-gu,  Incheon 402-751,  Korea (hdlee@inha.ac.kr).}   \and Sanghyeon Yu\footnotemark[2]}
\date{} 
\begin{document}
\maketitle
\begin{abstract}
The ability to manipulate the propagation of waves on subwavelength scales is important for many different physical applications. In this paper, we consider a honeycomb lattice of subwavelength resonators and prove, for the first time,  the existence of a Dirac dispersion cone at subwavelength scales. As shown in [H. Ammari et al., A high-frequency homogenization approach near the Dirac points in bubbly honeycomb crystals, arXiv:1812.06178], near the Dirac points, honeycomb crystals of subwavelength resonators has a great potential to be used as near-zero materials.  Here, we perform the analysis for the example of bubbly crystals, which is a classic example of subwavelength resonance, where the resonant frequency of a single bubble is known as the {Minnaert resonant frequency}. Our first result is an asymptotic formula for the quasi-periodic Minnaert resonant frequencies. We then prove the linear dispersion relation of a Dirac cone. Our findings in this paper are numerically illustrated in the case of circular bubbles, where the multipole expansion method provides an efficient technique for computing the band structure.

\end{abstract}

\def\keywords2{\vspace{.5em}{\textbf{  Mathematics Subject Classification
(MSC2000).}~\,\relax}}
\def\endkeywords2{\par}
\keywords2{35R30, 35C20.}

\def\keywords{\vspace{.5em}{\textbf{ Keywords.}~\,\relax}}
\def\endkeywords{\par}
\keywords{Honeycomb lattice, Dirac cone, bubble, Minnaert resonance, subwavelength bandgap.}

\section{Introduction}

Subwavelength crystals are based on locally resonant structures with large material contrasts, repeated periodically. They are a novel group of synthetic materials which enables manipulation of waves on very small spatial scales, known as \emph{subwavelength} scales.  In this setting, a \emph{crystal} refers to a large structure with a periodically repeated microstructure. Due to their small scales, subwavelength crystals are very useful in physical applications, especially situations where the operating wavelengths are very large. They have been investigated both numerically and experimentally in \cite{rev1,leroy2, fink,  leroy1, leroy3, rev2, rev3}.

Recently, there have been many discoveries involving materials that exhibit intriguing wave propagation properties due to the presence of a Dirac cone in their band structures \cite{jmp, Dirac4, Dirac5,Dirac7,Dirac8,Dirac3, torrent, Dirac1, Dirac2}. A Dirac cone is a linear intersection of two curves in the dispersion diagram, and is a consequence of non-trivial symmetry of the lattice. Dirac cones have typically been studied in the context of electron bands in graphene, where peculiar effects such as Klein tunnelling and Zitterbewegung have been observed. Moreover, Dirac cones have been demonstrated in acoustic analogues of graphene, which can give effective zero refractive index materials \cite{torrent, NZ2012, dai2017dirac, zheng2020acoustic}. Typically, by breaking the symmetry of the lattice, the Dirac cone can be opened to a bandgap. This is a fundamental mechanism to create topologically robust guiding of waves \cite{Dirac4, Dirac5, Dirac6, Dirac2018, Dirac2, yves2017crytalline}.

\hl{In this work, the acoustic resonant structure is an inclusion embedded in a surrounding material with significantly higher density. Inspired by an air bubble in water, which possesses a resonant frequency known as the Minnaert resonant frequency} \cite{prsa,H3a,Minnaert1933}, \hl{we think of the system as a periodic array of bubbles. Physically, such systems can be stabilized (preserving the resonant behaviour) by replacing the liquid with a soft, elastic medium} \cite{leroy1,leroy3}. The high density contrast is crucial for the resonance to occur at subwavelength scales. Due to its simplicity, the Minnaert bubble is an ideal component for constructing subwavelength scale metamaterials, and has been studied, for instance, in \cite{prsa, H3a, AFLYZ, Ammari_David,defect,homogenization}. In particular, it was proved in \cite{AFLYZ} that a bubbly crystal features a bandgap opening in the subwavelength regime. Similarly, it is possible to create materials with Dirac cones at subwavelength scales, resulting in small scale metamaterials with Dirac singularities. Such materials have been experimentally and numerically studied in \cite{Dirac1,Dirac2,yves2018measuring}. General overviews of acoustic metamaterials are presented in \cite{alu,rev2,wu2018perspective}. 

The goal of this work is to study an acoustic analogue of graphene, composed of bubbles in a hexagonal honeycomb structure. Wave propagation in the crystal is modelled by a high-contrast Helmholtz problem. The main objective is to rigorously prove the existence of a Dirac cone in the subwavelength scale in a bubbly honeycomb crystal. 

The mathematical analysis of the band structure of graphene was originally based on a tight-binding model under certain nearest-neighbour approximations \cite{tightbind2,tightbind1}, and later generalized to a broad class of Schrödinger operators with honeycomb lattice potentials \cite{Dirac6, Dirac7, Dirac2018, Dirac3}. In this work, using layer potential theory and Gohberg-Sigal theory, we  demonstrate an original and powerful method for analysing metamaterials with honeycomb structures. 
The method is well-suited for investigating wave propagation in  media with discontinuous material parameters. Such materials arise naturally when designing subwavelength metamaterials, but are mathematically challenging to analyse. We consider a general shape of the scatterer, only assuming some natural symmetry assumptions. This generalizes previous works done with circular scatterers \cite{torrent}. Moreover, we derive an original formula for the slope of the Dirac cone in a bubbly honeycomb crystal and give explicitly the behaviour of the error term in terms of the contrast in the material parameters.

The paper is organised as follows. In Section \ref{sec:setup}, we define the geometry of the bubbly honeycomb crystal and formulate the spectral resonance problem. We also introduce some well-known results regarding the quasi-periodic Green's function on the honeycomb lattice. The computation of the Dirac cone is performed in Section \ref{sec:pre} and Section \ref{sec:dirac}. In Section \ref{sec:pre}, we derive an asymptotic formula for the quasi-periodic Minnaert resonant frequency in terms of the density contrast. In Section \ref{sec:dirac}, we rigorously show the existence of a Dirac dispersion cone. \hl{The two sections complement each other in the following way: in Section} \ref{sec:pre} \hl{we compute explicit approximations of the band functions, but cannot prove the existence of an exact Dirac cone. Conversely, in Section} \ref{sec:dirac} \hl{we  prove the existence of a Dirac cone, but we cannot explicitly compute the slope and centre of the cone with the method used there.} Also, it is worth emphasizing that the high-contrast condition is needed to guarantee the two-fold degeneracy found in a Dirac cone, which could fail without this condition. Thus, having high-contrast parameters is not required due to a limitation of the proof, but should be viewed as a method to create subwavelength Dirac cones. In Section \ref{sec:num}, we numerically compute the Dirac cones in the case of circular bubbles using the multipole expansion method. The paper ends with some concluding remarks in Section \ref{sec:conclusion}.

%%%%%%%%%%%%%%%%%%%%%%%%%%%%%%%%%%%%%%%%%%%%%%%%%%%%%%%%%%%%%%%%%%%%%%%%%%%%%%%%%%%%%
\section{Problem statement and preliminaries} \label{sec:setup}
In this section, we formulate the resonance problem for the honeycomb crystal and briefly describe the layer potential theory that will be used in the subsequent analysis. 

\subsection{Problem formulation}
In order to \hl{formulate} the problem under consideration, we start by describing the bubbly honeycomb crystal depicted in Figure \ref{fig:honeycomb}. We consider a two-dimensional, infinite crystal in two dimensions. \hl{In possible physical realisations, this corresponds to a structure that is invariant along the third spatial dimension.} Define a hexagonal lattice $\Lambda$ as the lattice generated by the lattice vectors:
$$ l_1 = a\left( \frac{\sqrt{3}}{2}, \frac{1}{2} \right),~~l_2 = a\left( \frac{\sqrt{3}}{2}, -\frac{1}{2}\right).$$
Here, $a$ denotes the lattice constant. Denote by $Y$ a fundamental domain of the given lattice. Here, we take 
$$ Y:= \left\{ s l_1+ t l_2 ~|~ 0 \le s,t \le 1 \right\}. $$
We decompose the fundamental domain $Y$ into two parts:
$$ Y= Y_1\cup Y_2 , $$
where
$$ Y_1 = \left\{ s l_1+ t l_2 ~|~ 0 \le s,t, \ \text{and} \  t + s \leq 1 \right\},~ Y_2=Y\setminus Y_1.$$
We denote the centres of $Y$, $Y_1$, and $Y_2$, respectively, by $x_0, x_1$, and $x_2$, \ie{},
$$x_0 = \frac{l_1 + l_2}{2}, \quad x_1 = \frac{l_1+l_2}{3}, \quad x_2 = \frac{2(l_1 + l_2)}{3} .$$

\begin{figure}[tb]
	\centering
	\begin{tikzpicture}
	\begin{scope}[xshift=-5cm,scale=1]
	\coordinate (a) at (1,{1/sqrt(3)});		
	\coordinate (b) at (1,{-1/sqrt(3)});	
	\pgfmathsetmacro{\rb}{0.25pt}
	\pgfmathsetmacro{\rs}{0.2pt}
	
	\draw (0,0) -- (1,{1/sqrt(3)}) -- (2,0) -- (1,{-1/sqrt(3)}) -- cycle; 
	\begin{scope}[xshift = 1.33333cm]
	\draw plot [smooth cycle] coordinates {(0:\rb) (60:\rs) (120:\rb) (180:\rs) (240:\rb) (300:\rs) };
	\end{scope}
	\begin{scope}[xshift = 0.666667cm, rotate=60]
	\draw plot [smooth cycle] coordinates {(0:\rb) (60:\rs) (120:\rb) (180:\rs) (240:\rb) (300:\rs) };
	\end{scope}
	
	\draw[opacity=0.2] ({2/3},0) -- ({4/3},0)
		($0.5*(1,{1/sqrt(3)})$) -- ({2/3},0)
		($0.5*(1,{-1/sqrt(3)})$) -- ({2/3},0)
		($(1,{1/sqrt(3)})+0.5*(1,{-1/sqrt(3)})$) -- ({4/3},0)
		($0.5*(1,{1/sqrt(3)})+(1,{-1/sqrt(3)})$) -- ({4/3},0);

\begin{scope}[shift = (a)]
	\begin{scope}[xshift = 1.33333cm]
	\draw plot [smooth cycle] coordinates {(0:\rb) (60:\rs) (120:\rb) (180:\rs) (240:\rb) (300:\rs) };
	\end{scope}
	\begin{scope}[xshift = 0.666667cm, rotate=60]
	\draw plot [smooth cycle] coordinates {(0:\rb) (60:\rs) (120:\rb) (180:\rs) (240:\rb) (300:\rs) };
	\end{scope}	
	\draw[opacity=0.2] ({2/3},0) -- ({4/3},0)
		($0.5*(1,{1/sqrt(3)})$) -- ({2/3},0)
		($0.5*(1,{-1/sqrt(3)})$) -- ({2/3},0)
		($(1,{1/sqrt(3)})+0.5*(1,{-1/sqrt(3)})$) -- ({4/3},0)
		($0.5*(1,{1/sqrt(3)})+(1,{-1/sqrt(3)})$) -- ({4/3},0);
\end{scope}
\begin{scope}[shift = (b)]
	\begin{scope}[xshift = 1.33333cm]
	\draw plot [smooth cycle] coordinates {(0:\rb) (60:\rs) (120:\rb) (180:\rs) (240:\rb) (300:\rs) };
	\end{scope}
	\begin{scope}[xshift = 0.666667cm, rotate=60]
	\draw plot [smooth cycle] coordinates {(0:\rb) (60:\rs) (120:\rb) (180:\rs) (240:\rb) (300:\rs) };
	\end{scope}
	\draw[opacity=0.2] ({2/3},0) -- ({4/3},0)
		($0.5*(1,{1/sqrt(3)})$) -- ({2/3},0)
		($0.5*(1,{-1/sqrt(3)})$) -- ({2/3},0)
		($(1,{1/sqrt(3)})+0.5*(1,{-1/sqrt(3)})$) -- ({4/3},0)
		($0.5*(1,{1/sqrt(3)})+(1,{-1/sqrt(3)})$) -- ({4/3},0);
\end{scope}
\begin{scope}[shift = ($-1*(a)$)]
	\begin{scope}[xshift = 1.33333cm]
	\draw plot [smooth cycle] coordinates {(0:\rb) (60:\rs) (120:\rb) (180:\rs) (240:\rb) (300:\rs) };
	\end{scope}
	\begin{scope}[xshift = 0.666667cm, rotate=60]
	\draw plot [smooth cycle] coordinates {(0:\rb) (60:\rs) (120:\rb) (180:\rs) (240:\rb) (300:\rs) };
	\end{scope}
	\draw[opacity=0.2] ({2/3},0) -- ({4/3},0)
	($0.5*(1,{1/sqrt(3)})$) -- ({2/3},0)
	($0.5*(1,{-1/sqrt(3)})$) -- ({2/3},0)
	($(1,{1/sqrt(3)})+0.5*(1,{-1/sqrt(3)})$) -- ({4/3},0)
	($0.5*(1,{1/sqrt(3)})+(1,{-1/sqrt(3)})$) -- ({4/3},0);
	\end{scope}
	\begin{scope}[shift = ($-1*(b)$)]
	\begin{scope}[xshift = 1.33333cm]
	\draw plot [smooth cycle] coordinates {(0:\rb) (60:\rs) (120:\rb) (180:\rs) (240:\rb) (300:\rs) };
	\end{scope}
	\begin{scope}[xshift = 0.666667cm, rotate=60]
	\draw plot [smooth cycle] coordinates {(0:\rb) (60:\rs) (120:\rb) (180:\rs) (240:\rb) (300:\rs) };
	\end{scope}
	\draw[opacity=0.2] ({2/3},0) -- ({4/3},0)
	($0.5*(1,{1/sqrt(3)})$) -- ({2/3},0)
	($0.5*(1,{-1/sqrt(3)})$) -- ({2/3},0)
	($(1,{1/sqrt(3)})+0.5*(1,{-1/sqrt(3)})$) -- ({4/3},0)
	($0.5*(1,{1/sqrt(3)})+(1,{-1/sqrt(3)})$) -- ({4/3},0);
	\end{scope}
\begin{scope}[shift = ($(a)+(b)$)]
	\begin{scope}[xshift = 1.33333cm]
	\draw plot [smooth cycle] coordinates {(0:\rb) (60:\rs) (120:\rb) (180:\rs) (240:\rb) (300:\rs) };
	\end{scope}
	\begin{scope}[xshift = 0.666667cm, rotate=60]
	\draw plot [smooth cycle] coordinates {(0:\rb) (60:\rs) (120:\rb) (180:\rs) (240:\rb) (300:\rs) };
	\end{scope}
	\draw[opacity=0.2] ({2/3},0) -- ({4/3},0)
	($0.5*(1,{1/sqrt(3)})$) -- ({2/3},0)
	($0.5*(1,{-1/sqrt(3)})$) -- ({2/3},0)
	($(1,{1/sqrt(3)})+0.5*(1,{-1/sqrt(3)})$) -- ({4/3},0)
	($0.5*(1,{1/sqrt(3)})+(1,{-1/sqrt(3)})$) -- ({4/3},0);
	\end{scope}
	\begin{scope}[shift = ($-1*(a)-(b)$)]
	\begin{scope}[xshift = 1.33333cm]
	\draw plot [smooth cycle] coordinates {(0:\rb) (60:\rs) (120:\rb) (180:\rs) (240:\rb) (300:\rs) };
	\end{scope}
	\begin{scope}[xshift = 0.666667cm, rotate=60]
	\draw plot [smooth cycle] coordinates {(0:\rb) (60:\rs) (120:\rb) (180:\rs) (240:\rb) (300:\rs) };
	\end{scope}
	\draw[opacity=0.2] ({2/3},0) -- ({4/3},0)
	($0.5*(1,{1/sqrt(3)})$) -- ({2/3},0)
	($0.5*(1,{-1/sqrt(3)})$) -- ({2/3},0)
	($(1,{1/sqrt(3)})+0.5*(1,{-1/sqrt(3)})$) -- ({4/3},0)
	($0.5*(1,{1/sqrt(3)})+(1,{-1/sqrt(3)})$) -- ({4/3},0);
	\end{scope}
\begin{scope}[shift = ($(a)-(b)$)]
	\begin{scope}[xshift = 1.33333cm]
	\draw plot [smooth cycle] coordinates {(0:\rb) (60:\rs) (120:\rb) (180:\rs) (240:\rb) (300:\rs) };
	\end{scope}
	\begin{scope}[xshift = 0.666667cm, rotate=60]
	\draw plot [smooth cycle] coordinates {(0:\rb) (60:\rs) (120:\rb) (180:\rs) (240:\rb) (300:\rs) };
	\end{scope}
	\draw[opacity=0.2] ({2/3},0) -- ({4/3},0)
	($0.5*(1,{1/sqrt(3)})$) -- ({2/3},0)
	($0.5*(1,{-1/sqrt(3)})$) -- ({2/3},0)
	($(1,{1/sqrt(3)})+0.5*(1,{-1/sqrt(3)})$) -- ({4/3},0)
	($0.5*(1,{1/sqrt(3)})+(1,{-1/sqrt(3)})$) -- ({4/3},0);
	\end{scope}
	\begin{scope}[shift = ($-1*(a)+(b)$)]
	\begin{scope}[xshift = 1.33333cm]
	\draw plot [smooth cycle] coordinates {(0:\rb) (60:\rs) (120:\rb) (180:\rs) (240:\rb) (300:\rs) };
	\end{scope}
	\begin{scope}[xshift = 0.666667cm, rotate=60]
	\draw plot [smooth cycle] coordinates {(0:\rb) (60:\rs) (120:\rb) (180:\rs) (240:\rb) (300:\rs) };
	\end{scope}
	\draw[opacity=0.2] ({2/3},0) -- ({4/3},0)
	($0.5*(1,{1/sqrt(3)})$) -- ({2/3},0)
	($0.5*(1,{-1/sqrt(3)})$) -- ({2/3},0)
	($(1,{1/sqrt(3)})+0.5*(1,{-1/sqrt(3)})$) -- ({4/3},0)
	($0.5*(1,{1/sqrt(3)})+(1,{-1/sqrt(3)})$) -- ({4/3},0);
\end{scope}
\end{scope}

\draw[dashed,opacity=0.5,->] (-3.9,0.65) .. controls(-1.8,1.5) .. (1,0.7);
	\begin{scope}[scale=2.8]	
	\coordinate (a) at (1,{1/sqrt(3)});		
	\coordinate (b) at (1,{-1/sqrt(3)});	
	\coordinate (Y) at (1.8,0.45);
	\coordinate (c) at (2,0);
	\coordinate (x1) at ({2/3},0);
	\coordinate (x0) at (1,0);
	\coordinate (x2) at ({4/3},0);

	\pgfmathsetmacro{\rb}{0.25pt}
	\pgfmathsetmacro{\rs}{0.2pt}
	
	\begin{scope}[xshift = 1.33333cm]
	\draw plot [smooth cycle] coordinates {(0:\rb) (60:\rs) (120:\rb) (180:\rs) (240:\rb) (300:\rs) };
	\draw (0:\rb) node[xshift=7pt] {$D_2$};
	\end{scope}
	\begin{scope}[xshift = 0.666667cm, rotate=60]
	\draw plot [smooth cycle] coordinates {(0:\rb) (60:\rs) (120:\rb) (180:\rs) (240:\rb) (300:\rs) };
	\end{scope}
	\draw ({0.6666667-\rb},0) node[xshift=-7pt] {$D_1$};
	
	\draw (Y) node{$Y$};
	\draw[->] (0,0) -- (a) node[above]{$l_1$};
	\draw[->] (0,0) -- (b) node[pos=0.4,below left]{$Y_1$} node[below]{$l_2$};
	\draw[dashed] (a) -- (b) node[below]{$l_2$};
	\draw (a) -- (c) -- (b) node[pos=0.4,below right]{$Y_2$};
	\draw[fill] (x1) circle(0.5pt) node[xshift=6pt,yshift=-6pt]{$x_1$}; 
	\draw[fill] (x0) circle(0.5pt) node[yshift=4pt]{$x_0$}; 
	\draw[fill] (x2) circle(0.5pt) node[xshift=6pt,yshift=4pt]{$x_2$}; 
	\end{scope}
\end{tikzpicture}
	\caption{Illustration of the bubbly honeycomb crystal and quantities in the fundamental domain $Y$.} \label{fig:honeycomb}
\end{figure}
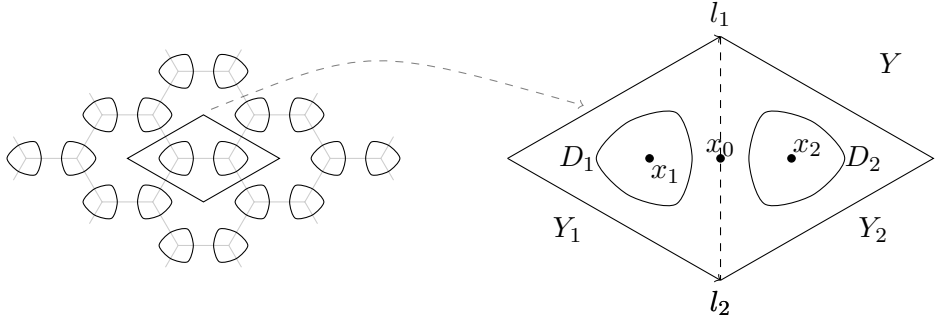

\hl{We assume that the bubbles are static and placed in a periodic crystal. Physically, this is achievable by placing the bubbles in a soft elastic medium} \cite{leroy1,leroy3}. We will assume that each bubble in the crystal has a three-fold rotational symmetry and that each pair of adjacent bubbles has a two-fold rotational symmetry. More precisely, let $R_1$ and $R_2$ be the rotations by $-\frac{2\pi}{3}$ around $x_1$ and $x_2$, respectively, and let $R_0$ be the rotation by $\pi$ around $x_0$. These rotations can be written as
$$ R_1 x = Rx+l_1, \quad R_2 x = Rx + 2l_1, \quad R_0 x = 2x_0 - x , $$
where $R$ is the rotation by  $-\frac{2\pi}{3}$ around the origin. Assume that each fundamental domain $Y_j, \ j=1,2$ contains one bubble $D_j$, which is a connected domain of Hölder class $\p D_j \in  C^{1,s}, \ 0<s<1$, satisfying 
$$ R_1 D_1 = D_1,\quad R_2 D_2 = D_2, \quad R_0 D_1 = D_2.$$
Denote the pair of bubbles, the bubble \emph{dimer}, by $D=D_1 \cup D_2$. Moreover, the full honeycomb crystal $\C$ is given by
$$\mathhl{\C = \bigcup_{m\in \Lambda} D + m.}$$ 

The dual lattice of $\Lambda$, denoted $\Lambda^*$, is generated by $\alpha_1$ and $\alpha_2$ satisfying $ \alpha_i\cdot l_j = 2\pi \delta_{ij}$,  for $i,j = 1,2.$ Then 
$$ \alpha_1 = \frac{2\pi}{a}\left( \frac{1}{\sqrt{3}}, 1\right),~~\alpha_2 = \frac{2\pi}{a}\left(\frac{1}{\sqrt{3}}, -1 \right).$$

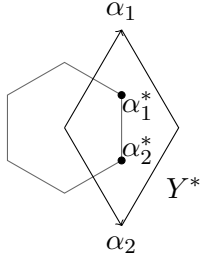
\begin{figure}
	\centering
	\begin{tikzpicture}[scale=1.3]	
	\coordinate (a) at ({1/sqrt(3)},1);		
	\coordinate (b) at ({1/sqrt(3)},-1);
	\coordinate (c) at ({2/sqrt(3)},0);
	\coordinate (K1) at ({1/sqrt(3)},{1/3});
	\coordinate (K2) at ({1/sqrt(3)},{-1/3});
	\coordinate (K3) at (0,{-2/3});
	\coordinate (K4) at ({-1/sqrt(3)},{-1/3});
	\coordinate (K5) at ({-1/sqrt(3)},{1/3});
	\coordinate (K6) at (0,{2/3});
	
	\draw[->] (0,0) -- (a) node[above]{$\alpha_1$};
	\draw[->] (0,0) -- (b) node[below]{$\alpha_2$};
	\draw (a) -- (c) -- (b) node[pos=0.4,below right]{$Y^*$};
	\draw[fill] (K1) circle(1pt) node[xshift=6pt,yshift=-4pt]{$\alpha_1^*$}; 
	\draw[fill] (K2) circle(1pt) node[xshift=6pt,yshift=4pt]{$\alpha_2^*$}; 
	%\draw[fill] (0,0) circle(1pt) node[left]{$\Gamma$}; 
	
	\draw[opacity=0.6] (K1) -- (K2) -- (K3) -- (K4) -- (K5) -- (K6) -- cycle; 
	\end{tikzpicture}
	\caption{Different representations of the Brillouin zone, generating vectors $\alpha_1, \alpha_2$, and the Dirac points $\alpha_1^*$, $\alpha_2^*$.} \label{fig:BZ}
\end{figure}
The Brillouin zone $Y^*$ is defined as the torus $Y^*:= {\R^2}/{\Lambda^*}$ and can be represented either as the unit cell
$$Y^* \simeq \left\{ s \alpha_1+ t \alpha_2 ~|~ 0 \le s,t \le 1 \right\}, $$
or as the \emph{first Brillouin zone}, which is the hexagon depicted in Figure \ref{fig:BZ}. \hl{As usual, for equivalence classes $\alpha, \beta \in Y^*$, with representatives $\alpha_0, \beta_0 \in \R^2$, we write $\alpha=\beta$ to denote $\alpha_0 = \beta_0 + q$ for some $q\in \Lambda^*$.}
The points $$\alpha_1^*= \frac{2\alpha_1+\alpha_2}{3}, \quad \alpha^*_2 = \frac{\alpha_1+2\alpha_2}{3} ,$$ in the Brillouin zone are called \emph{Dirac points}. Observe that, since $\Lambda^*$ is invariant under $R$, we have that $R:Y^*\rightarrow Y^*$ is a well-defined map. Moreover, we have 
$$
R\alpha_1^* = \frac{-\alpha_1+\alpha_2}{3} = \frac{2\alpha_1+\alpha_2}{3} = \alpha_1^*,$$
\hl{where the second equality follows due to the periodic nature of $Y^*$.} Similarly, $R\alpha_2^* = \alpha_2^*$.

Having defined the geometry, we now define the wave scattering problem in the bubbly honeycomb crystal. We denote by
$$\rho(x) = \rho_0\chi_{\R^2\setminus \C}(x) + \rho_b\chi_{\C}(x), \qquad \kappa(x) = \kappa_0\chi_{\R^2\setminus \C}(x) + \kappa_b\chi_{\C}(x),$$
where $\rho_0, \kappa_0$ and $\rho_b, \kappa_b$ denote the densities and bulk moduli \hl{outside and inside} the bubbles, respectively. Here, $\chi_{A}$ denotes the characteristic function of a set $A\in \R^2$. For a quasi-periodicity $\alpha \in Y^*$, we will study the $\alpha$-quasi-periodic Floquet component $u$ of the total wave field.  We therefore consider the following $\alpha$-quasi-periodic acoustic wave problem in $Y$:
\begin{equation}  \label{eq:pde}
\left\{
\begin{array} {lll}
\ds \nabla \cdot \frac{1}{\rho(x)} \nabla u(x)+ \frac{\omega^2}{\kappa(x)} u(x)  = 0 \quad &\text{in} \ \mathhl{\R^2}, \\[1em]
u(x+l)= e^{\iu\alpha\cdot l} u(x) \quad & \text{for all} \ l\in \Lambda.
\end{array}
\right.
\end{equation}
The differential equation in \eqnref{eq:pde} is the acoustic wave equation applied to time-harmonic waves of frequency $\omega$. The values of $\omega$ with positive real part such that there is a non-zero solution to \eqnref{eq:pde} are known as \emph{Bloch resonant frequencies}, or, seen as functions of $\alpha$, as \emph{band functions}. Since \eqnref{eq:pde} correspond to the Floquet transform of a self-adjoint operator, it is well-known that the band functions are real (see, for example, \cite[Chapter 5.2]{surv235} for an introduction to Floquet theory with applications to scattering problems). Let 
\begin{equation*} \label{v}
v:=\sqrt{\frac{\kappa_0}{\rho_0}}, \ ~  v_b:=\sqrt{\frac{\kappa_b}{\rho_b}}, \ ~k:=\frac{\omega}{v}, \ ~  k_b:=\frac{\omega}{ v_b}, \ ~ \delta:=\frac{\rho_b}{\rho_0}.
\end{equation*}
The parameter $\delta$ describes the contrast in the density and will be the key asymptotic parameter. We assume that $\rho_b$ is small compared to $\rho_0$ while the wave speeds are comparable and of order $1$, \ie{}, 
\begin{equation*} 
\delta  \ll 1 \quad \text{and} \quad v,  v_b=O(1).
\end{equation*}
In the case of air bubbles in water, $\delta \approx 10^{-3}$. The subwavelength frequency regime corresponds to frequencies $\omega$ which are considerably smaller than the lattice constant $a$. \hl{The motivation for studying systems of high-contrast bubbles is that such systems have resonant frequencies $\omega$ satisfying $\omega\rightarrow 0$ as $\delta\rightarrow 0$. In this work we say that a frequency $\omega$ is a subwavelength frequency if $\omega$ scales as $\omega = O(\delta^{1/2})$.}

With the notation as above, \eqnref{eq:pde} reads
\begin{equation}  \label{HP1}
\mathhl{\left\{
\begin{array} {lll}
&\ds \Delta  u+ k^2 u  = 0 \quad &\text{in} \ \R^2 \backslash \C, \\
\nm
&\ds \Delta u+ k_b^2 u  = 0 \quad &\text{in} \ \C, \\
\nm
&\ds  u_{+} -u_{-}  =0   \quad &\text{on} \ \p \C, \\
\nm
& \ds  \delta \frac{\partial u}{\partial \nu} \bigg|_{+} - \frac{\partial u}{\partial \nu} \bigg|_{-} =0 \quad &\text{on} \ \p \C, \\ 
& u(x+l)= e^{\iu\alpha\cdot l} u(x) \quad & \text{for all} \ l\in \Lambda.
\end{array}
\right.}
\end{equation}
Here, $\pd{}{\nu}$ denotes the normal derivative on $\partial \C$, and the subscripts $+$ and $-$ indicate the limits from outside and inside \hl{of} $\C$, respectively.

%%%%%%%%%%%%%%%%%%%%%%%%%%%%%%%%%%%%%%%%%%%%%%%%%%%%%%%%%%%%%%%%%%%%%%%%%%%%%%%%%%%%%
\subsection{Quasi-periodic Green's function for the honeycomb lattice}
In this section, we introduce the Green's function and the layer potentials that will be used in the sequel. A more detailed discussion can, for example, be found in \cite[Chapter 2.12]{surv235}.

Define the $\alpha$-quasi-periodic Green's function $G^{\alpha,k}$ to satisfy
\begin{equation*}
\Delta G^{\alpha,k} + k^2G^{\alpha,k} = \sum_{n \in \Lambda} \delta(x-n)e^{\iu\alpha\cdot n}.
\end{equation*}
\hl{If $k \neq |\alpha+q|$ for all $q\in \Lambda^*$, it can be shown} \cite{surv235,akl} that $G^{\alpha,k}$ is given by 
\begin{equation*}
G^{\alpha,k}(x)= \frac{1}{|Y|}\sum_{q\in \Lambda^*} \frac{e^{\iu(\alpha+q)\cdot x}}{ k^2-|\alpha+q|^2}.
\end{equation*}
From now on, we assume that the lattice constant $a$ is chosen so that $|Y|=1$.

For a given bounded domain $D$ in  $Y$, with Lipschitz boundary $\partial D$, the single layer potential of the density function $\varphi\in L^{2}(\partial D)$ is defined by
\begin{equation*}
\mathcal{S}_{D}^{\alpha,k}[\varphi](\Bx):=\int_{\partial D} G^{\alpha,k}( \Bx-\By ) \varphi(\By)\dx\sigma(\By),~~~\Bx\in \mathbb{R}^{2}.
\end{equation*}
The following jump relations are well-known \cite{surv235,akl}:
\begin{equation} \label{jump1}
\left. \frac{\partial}{\partial\nu} \mathcal{S}_{D}^{\alpha,k}[\varphi]\right|_{\pm}(\Bx)=\left (\pm\frac{1}{2}I+(\mathcal{K}_{D}^{-\alpha,k})^*\right)[\varphi](\Bx),~~~\Bx\in \partial D,
\end{equation}
where the Neumann-Poincar\'e operator $(\mathcal{K}_{D}^{-\alpha,k})^*$ is defined by 
\begin{equation*}
(\mathcal{K}_{D}^{-\alpha,k})^*[\varphi](x)=\text{p.v.}\int_{\partial D}\frac{\p}{\p \nu_x} G^{\alpha,k}(\Bx -\By) \varphi(\By)\dx\sigma(\By),~~~\Bx\in \partial D.
\end{equation*}
From \cite[Lemma 2.9]{surv235}, we know that $\Scal_D^{\alpha,\omega}: L^2(\partial D) \rightarrow H^1(\p D)$ is invertible when $\alpha \neq 0$ and for $\omega$ small enough.
 
In the rest of this work, we will assume that $\alpha$ is bounded away from zero. For some fixed $\alpha_0 >0, \alpha_0 \in \R$, we denote by $Y^*_0 = \{\alpha \in Y^* : |\alpha| \geq \alpha_0\}$. For $\alpha \in Y^*_0$ and $k$ small enough, we have $k \neq |\alpha+q|$ for all $q\in \Lambda^*$. Then, we can expand $G^{\alpha,k}$ with respect to $k$ as follows:
$$ G^{\alpha,k}= G^{\alpha,0}+ \sum_{j=1}^\infty k^{2j} G_j^{\alpha,0}, \quad \text{where} \quad G_j^{\alpha,0}(x):=-\sum_{q\in\Lambda^*}\mathhl{\frac{e^{\iu(\alpha+q)\cdot x}}{ |\alpha+q|^{2(j+1)}}}.$$
Using this expansion, we can expand the single layer potentials and the Neumann-Poincar\'e operators as
\begin{equation*}
\Scal_D^{\alpha,k} = \Scal_D^{\alpha,0} + \sum_{j = 1}^\infty k^{2j} \mathcal{S}^{\alpha,0}_{D,j}, \quad \text{where} \quad \mathcal{S}^{\alpha,0}_{D,j}[\phi](x):= \int_{\p D} G_j^{\alpha,0}(x-y)\phi(y)\dx\sigma(y),
\end{equation*}
and
\begin{equation*}
(\Kcal_D^{-\alpha,k})^* = (\Kcal_D^{-\alpha,0})^* + \sum_{j = 1}^\infty k^{2j} \mathcal{K}^{\alpha,0}_{D,j}, \quad \text{where} \quad \mathcal{K}^{\alpha,0}_{D,j} [\phi](x) := \int_{\p D} \frac{\p}{\p \nu_x} G_j^{\alpha,0}(x-y)\phi(y)\dx\sigma(y).
\end{equation*}
Since $G_j^{\alpha,0}$ is uniformly bounded for $x\in \R^2, \alpha \in Y^*_0$ and $j=1,2,..$, the operators $\Scal_{D,j}^{\alpha,0}$ and $\Kcal_{D,j}^{\alpha,0}$ are bounded operators for all $j$, and we have that $\|\Scal_{D,j}^{\alpha,0}\|_{\B(L^2(\p D), H^1(\p D))}$ and $\|\Kcal_{D,j}^{\alpha,0}\|_{\B(L^2(\p D), L^2(\p D))}$ are uniformly bounded for $\alpha \in Y^*, j=1,2,...$ Here, $\B(A,B)$ denotes the space of bounded operators between the normed spaces $A,B$ together with corresponding operator norm. We therefore have asymptotic expansions 
\begin{equation} \label{eq:SKexp}
\Scal_D^{\alpha,k} = \Scal_D^{\alpha,0} + k^2 \mathcal{S}^{\alpha,0}_{D,1} + O(k^4), \qquad 
(\Kcal_D^{-\alpha,k})^* = (\Kcal_D^{-\alpha,0})^* + k^{2} \mathcal{K}^{\alpha,0}_{D,1}+O(k^4),
\end{equation}
\hl{uniformly for $\alpha \in Y^*_0$, where the error terms are with respect to corresponding operator norm.} For $\alpha \in Y^*_0$, let $\psi_i\in L^2(\p D)$  be given by
\begin{align} \label{psi_def}
\psi_i = \left(\mathcal{S}_D^{\alpha,0}\right)^{-1}[\chi_{\p D_i}],\quad i=1, 2. 
\end{align}
In the following lemma, we collect some key properties of the layer potentials. The proof is analogous to proofs of similar results, in slightly different settings, found for example in \cite{surv235}.
\begin{lemma}\label{lem:layer}
	We assume $\alpha \in Y^*_0$.
	\begin{itemize}
		\item[(i)] We denote the $L^2(\p D)$-adjoint of the Neumann-Poincar\'e operator by $\Kcal_D^{\alpha,0}$. Then
		$$\ker\left(-\frac{1}{2}I + (\Kcal_D^{-\alpha,0})^*\right) = \mathrm{span}\{\psi_1, \psi_2\},\quad \ker\left(-\frac{1}{2}I + \Kcal_D^{\alpha,0}\right) = \mathrm{span}\{\chi_{\p D_1}, \chi_{\p D_2}\}.$$
		\item[(ii)] For any $\phi \in L^2(\p D)$ and for $j=1,2,$ we have		
		\begin{equation} \label{eq:Kint}
		\int_{\p D_j}\left(-\frac{1}{2}I +  (\Kcal_D^{-\alpha,0})^*\right)[\phi] \dx\sigma = 0.
		\end{equation}
		\item[(iii)] For any $\phi \in L^2(\p D)$ and for $j=1,2,$ we have		
		\begin{equation} \label{eq:K1int}
		\int_{\p D_j} \Kcal^{\alpha,0}_{D,1} [\phi](y) \dx\sigma(y) = -\int_{D_j}\Scal_D^{\alpha,0} [\phi](x) \dx x.
		\end{equation}
	\end{itemize}
\end{lemma}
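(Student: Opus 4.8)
The plan is to derive all three statements from the divergence theorem applied on the individual bubbles $D_j$, combined with two PDE properties of the static Green's functions and the jump relation \eqnref{jump1} at $k=0$. The key facts are that $G^{\alpha,0}$ is harmonic on $\R^2\setminus\Lambda$, and that a term-by-term computation on the lattice sums gives $\Delta G_1^{\alpha,0} = -G^{\alpha,0}$ (recalling $|Y|=1$). Since the bubble centres $x_1,x_2$ are interior points of $Y$ and the closed bubbles avoid the lattice $\Lambda$, for $x$ in the interior of $D_j$ and $y\in\p D$ the argument $x-y$ never hits a lattice point; hence $\Scal_D^{\alpha,0}[\phi]$ is harmonic in each $D_j$, while $\Scal_{D,1}^{\alpha,0}[\phi]$ is smooth there with Laplacian $-\Scal_D^{\alpha,0}[\phi]$. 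I will also use throughout the invertibility of $\Scal_D^{\alpha,0}$ on $Y^*_0$, which holds precisely because $\alpha$ is bounded away from $0$. It is convenient to establish (ii) and (iii) first, since the second identity of (i) will use (ii); there is no circularity, as (ii) is proved directly.

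Parts (ii) and (iii) are then immediate. For (ii), the jump relation identifies $\left(-\frac{1}{2}I + (\Kcal_D^{-\alpha,0})^*\right)[\phi]$ with $\p\Scal_D^{\alpha,0}[\phi]/\p\nu|_-$, so its integral over $\p D_j$ equals $\int_{D_j}\Delta\Scal_D^{\alpha,0}[\phi]\dx x = 0$ by harmonicity. For (iii), I set $w=\Scal_{D,1}^{\alpha,0}[\phi]$; because $G_1^{\alpha,0}$ is $C^1$ across $\p D$ (it is smooth away from lattice points), its normal derivative has no jump and $\Kcal_{D,1}^{\alpha,0}[\phi]=\p w/\p\nu$ on $\p D$. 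The divergence theorem then gives $\int_{\p D_j}\Kcal_{D,1}^{\alpha,0}[\phi]\dx\sigma=\int_{D_j}\Delta w\dx x$, and inserting $\Delta w=-\Scal_D^{\alpha,0}[\phi]$ yields the claimed formula.

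For the first identity in (i), set $u=\Scal_D^{\alpha,0}[\psi_i]$. By \eqnref{psi_def}, $u=\chi_{\p D_i}$ on $\p D$, so $u$ is the harmonic function in $D$ with boundary data $1$ on $\p D_i$ and $0$ on $\p D_{3-i}$; by uniqueness of the Dirichlet problem on each component, $u$ is locally constant in $D$, hence $\p u/\p\nu|_-=0$ and the jump relation gives $\left(-\frac{1}{2}I + (\Kcal_D^{-\alpha,0})^*\right)[\psi_i]=0$. Conversely, if $\psi$ lies in this kernel then $w:=\Scal_D^{\alpha,0}[\psi]$ is harmonic in $D$ with vanishing interior Neumann data, so $w$ is a constant $c_j$ on each $D_j$; continuity of the single layer potential gives $w|_{\p D}=c_1\chi_{\p D_1}+c_2\chi_{\p D_2}$, and applying $(\Scal_D^{\alpha,0})^{-1}$ yields $\psi=c_1\psi_1+c_2\psi_2$. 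Invertibility of $\Scal_D^{\alpha,0}$ makes $\psi_1,\psi_2$ independent, so the kernel is exactly $\mathrm{span}\{\psi_1,\psi_2\}$. For the second identity, I argue by duality: with $\Kcal_D^{\alpha,0}$ the $L^2(\p D)$-adjoint of $(\Kcal_D^{-\alpha,0})^*$, part (ii) rewritten as $\langle\left(-\frac{1}{2}I + (\Kcal_D^{-\alpha,0})^*\right)[\phi],\chi_{\p D_j}\rangle=0$ for all $\phi$ forces $\left(-\frac{1}{2}I + \Kcal_D^{\alpha,0}\right)[\chi_{\p D_j}]=0$; compactness of the Neumann--Poincar\'e operator on the $C^{1,s}$ boundary makes $-\frac{1}{2}I + \Kcal_D^{\alpha,0}$ Fredholm of index zero, so its kernel is equidimensional with that of its adjoint, fixing the dimension at two.

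I expect the main obstacle to be the \emph{sharp} dimension count in (i), namely showing the kernels are no larger than two-dimensional. This is where uniqueness of the interior Neumann problem on each connected component, the invertibility of $\Scal_D^{\alpha,0}$, and the Fredholm theory for the adjoint operator all have to be invoked together; the remaining steps are routine bookkeeping with the divergence theorem. One should also take care to confirm at the outset that no lattice point lies in the closed bubbles, as this is what guarantees the harmonicity and smoothness used at every stage.
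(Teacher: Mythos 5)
Your proof is correct, and parts (i)--(forward and converse) and (ii) follow essentially the same route as the paper: the jump relation \eqref{jump1}, harmonicity of $\Scal_D^{\alpha,0}[\psi]$ in $D$, and uniqueness for the interior Dirichlet/Neumann problems on each component. The two places where you genuinely diverge are (iii) and the second identity of (i). For (iii), the paper never computes $\Delta G_1^{\alpha,0}$; instead it integrates $\left(-\frac{1}{2}I+(\Kcal_D^{-\alpha,k})^*\right)[\phi]$ over $\p D_j$ in two ways, expands both sides in $k$ using \eqref{eq:SKexp}, and matches the leading (k-independent) coefficients. Your direct route --- observing that the lattice sum for $G_1^{\alpha,0}$ gives $\Delta G_1^{\alpha,0}=-G^{\alpha,0}$ and that $\Kcal_{D,1}^{\alpha,0}[\phi]$ is the (jump-free) normal derivative of $\Scal_{D,1}^{\alpha,0}[\phi]$ --- is more self-contained and explains \emph{why} the identity holds, at the cost of having to verify regularity of $G_1^{\alpha,0}$ by hand. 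One imprecision there: the reason $\p_\nu\Scal_{D,1}^{\alpha,0}[\phi]$ has no jump is not that $G_1^{\alpha,0}$ is "smooth away from lattice points" (the relevant issue is precisely the near-diagonal behaviour $x-y\to 0\in\Lambda$), but that the differentiated lattice sum has terms of size $O(|q|^{-3})$, which is absolutely summable over the two-dimensional dual lattice, so $G_1^{\alpha,0}\in C^1(\R^2)$ globally, including at lattice points; you state the right conclusion but the parenthetical justification should be fixed. For the second identity of (i), the paper uses the Calder\'on identity $\Scal_D^{\alpha,0}(\Kcal_D^{-\alpha,0})^*=\Kcal_D^{\alpha,0}\Scal_D^{\alpha,0}$ to transport the kernel directly via $\Scal_D^{\alpha,0}$, which immediately gives both the inclusion and the dimension; your duality-plus-Fredholm argument (part (ii) forces $\chi_{\p D_j}\in\ker(-\frac12 I+\Kcal_D^{\alpha,0})$, and index zero pins the dimension at two) is equally valid and avoids invoking the Calder\'on identity, at the cost of invoking compactness of the Neumann--Poincar\'e operator, which the paper only uses later.
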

\begin{proof}
	To prove (i), we first observe that the jump relation \eqnref{jump1} implies that $ \mathrm{span}\{\psi_1, \psi_2\} \subset \ker\left(-\frac{1}{2}I + (\Kcal_D^{-\alpha,0})^*\right)$. Conversely, if $\left(-\frac{1}{2}I + (\Kcal_D^{-\alpha,0})^*\right)[\psi] = 0$, we define $u(x) = \Scal_D^{\alpha,0}[\psi]$ and conclude from the jump relations that 
	$$
	\begin{cases}
	\Delta u =0 \quad &\mbox{in}~D,\\
	\ds \frac{\p u}{\p \nu} = 0 &\mbox{on}~\partial D.
	\end{cases}
	$$
	It follows that $u|_{D_i}$ is constant, so $u|_{\p D} = a_1\chi_{\p D_1} + a_2\chi_{\p D_2}$. Therefore $\psi = a_1\psi_1 + a_2\psi_2$, which proves $\ker\left(-\frac{1}{2}I + (\Kcal_D^{-\alpha,0})^*\right)  \subset \mathrm{span}\{\psi_1, \psi_2\} $. The second equality of (i) follows from the first, combined with the well-known Calder\'on identity $\Scal_D^{\alpha,0}(\Kcal_D^{-\alpha,0})^* =\Kcal_D^{\alpha,0}\Scal_D^{\alpha,0}$ \cite{surv235}.
	
	For the proof of (ii), we use the jump relation \eqnref{jump1} and integration by parts. Then
	$$\int_{\p D_j}\left(-\frac{1}{2}I +  (\Kcal_D^{-\alpha,0})^*\right)[\phi] \dx\sigma = \int_{\p D_j}\frac{\p \Scal_D^{\alpha,0}}{\p \nu}\Bigg|_-[\phi] \dx\sigma = \int_{D_j} \Delta \Scal_D^{\alpha,0}[\phi] \dx x = 0.$$
	To prove (iii), we use (ii) to conclude that on one hand
	$$\int_{\p D_j}\left(-\frac{1}{2}I +  (\Kcal_D^{-\alpha,k})^*\right)[\phi] \dx\sigma = 	k^2\int_{\p D_j} \Kcal^{\alpha,0}_{D,1} [\phi](y) \dx\sigma(y) + O(k^4\|\phi\|_{L^2(\p D)}).$$
	On the other hand, as in the proof of (ii) we have
	$$\int_{\p D_j}\left(-\frac{1}{2}I +  (\Kcal_D^{-\alpha,k})^*\right)[\phi] \dx\sigma = -k^2 \int_{D_j} \Scal_D^{\alpha,k}[\phi] \dx x = -k^2 \int_{D_j} \Scal_D^{\alpha,0}[\phi] \dx x + O(k^3\|\phi\|_{L^2(\p D)}),$$
	where we have used the expansion \eqnref{eq:SKexp}. Combined, we have
	$$
	\int_{\p D_j} \Kcal^{\alpha,0}_{D,1} [\phi](y) \dx\sigma(y) = -\int_{D_j}\Scal_D^{\alpha,0} [\phi](x) \dx x + O(k\|\phi\|_{L^2(\p D)}),
	$$
	and since the leading orders on the left-hand side and right-hand side are independent of $k$, they must coincide.
\end{proof}

Next, we derive asymptotic expansions for $\alpha$ near a Dirac point $\alpha^*$. From \cite{surv235}, we know that $G^{\alpha,k}(x) - G^{\alpha^*,k}(x)$ is continuously differentiable in $\alpha$ for $\alpha\in Y^*_0$, and bounded for $x\in Y$ and for $k$ in a neighbourhood of $0$. We therefore have the following asymptotic expansion of $G^{\alpha,k}$
\begin{align} \label{eq:exp}
G^{\alpha,k}(x) &= G^{\alpha^*,k}(x)+ \sum_{q\in\Lambda^*} \frac{e^{\iu(\alpha^*+q)\cdot x}}{ k^2-|\alpha^*+q|^2}  \left( \iu x \cdot (\alpha-\alpha^*) + 2\frac{(\alpha^*+q)\cdot(\alpha-\alpha^*)}{k^2-|\alpha^*+q|^2}\right) \nonumber\\
&  \qquad +O(|\alpha-\alpha^*|^2),
\end{align} 
uniformly for $k$ in a neighbourhood of $0$ and for $x\in Y$. We define $G_1^{k}$ by
$$ G_1^{k}(x):=\sum_{q\in\Lambda^*}\frac{e^{\iu(\alpha^*+q)\cdot x}}{ k^2-|\alpha^*+q|^2}  \left( \iu x + \frac{2(\alpha^*+q)}{k^2-|\alpha^*+q|^2}\right).$$
and the integral operators \hl{$\mathcal{S}^{k}_{1}$ and $\mathcal{K}_{1}^k$} as
\begin{align*}
\mathcal{S}^{k}_{1}[\phi](x):= \int_{\p D} G_1^{k}(x-y)\phi(y)\dx\sigma(y),\qquad \mathcal{K}_{1}^k [\phi](x):= \int_{\p D} \frac{\p}{\p \nu_x} G_1^k(x-y)\phi(y)\dx\sigma(y).
\end{align*}
We then have the expansions
\begin{align}
\Scal_D^{\alpha,k} &= \Scal_D^{\alpha^*,k} + \mathcal{S}^{k}_{1}\cdot (\alpha-\alpha^*) + O(|\alpha-\alpha^*|^2), \label{eq:S1}\\
(\Kcal_D^{-\alpha,k})^* &= (\Kcal_D^{-\alpha^*,k})^* + \Kcal^{k}_{1}\cdot (\alpha-\alpha^*) + O(|\alpha-\alpha^*|^2), \label{eq:K1}
\end{align}
\hl{uniformly for $k$ in a neighbourhood of $0$, where the error terms are with respect to the corresponding operator norm.}

%%%%%%%%%%%%%%%%%%%%%%%%%%%%%%%%%%%%%%%%%%%%%%%%%%%%%%%%%%%%%%%%%%%%%%%%%%%%%%%%%%%%%
 \subsection{Quasi-periodic capacitance matrix}\label{subsec:cap}
Let $V_i^\alpha, \ i=1,2,$ be the solution to
\begin{equation*} \label{cap_eq}
\begin{cases}
\Delta V_i^\alpha =0 \quad &\mbox{in}~\R^2\setminus \C,\\
V_i^\alpha = \delta_{ij} &\mbox{on}~\partial D_j,\\
V_i^\alpha(x+l)= e^{\iu\alpha\cdot l} V_i^\alpha(x) \quad &\forall l\in \Lambda.
\end{cases}
\end{equation*}
\hl{The intuitive idea for defining these functions is as follows. In the asymptotic limit $\delta \rightarrow 0$, the differential problem} \eqnref{HP1} \hl{decouples into a Neumann problem inside $D$ and a Dirichlet problem outside $D$. In the subwavelength limit $\omega \rightarrow 0$, this Neumann problem is solved by constant functions, and thus the Dirichlet data will be constant on $\p D$. Therefore, outside $D$, a solution $u$ to} \eqnref{HP1} \hl{can be approximated by a linear combination of $V_1^\alpha$ and $V_2^\alpha$. The fact that $u$ is approximately constant on $D$, and is thus (approximately) determined by these constant values, turns the continuous spectral problem} \eqnref{HP1} \hl{into a discrete eigenvalue problem in terms of the capacitance matrix $C^\alpha$ defined below. This underlying idea is made precise in the proof of Theorem} \ref{thm:delta}, \hl{in order to compute the Bloch resonant frequencies.}

We define the quasi-periodic capacitance coefficients $(C_{ij}^\alpha)$ by
$$ C_{ij}^\alpha := \int_{Y\setminus D} \overline{\nabla V_i^\alpha} \cdot \nabla V_j^\alpha \dx x,\quad i,j=1, 2.$$
\hl{Here, \emph{quasi-periodic} refers to the fact that  $(C_{ij}^\alpha)$ depend on the quasi-periodicity $\alpha\in Y^*$. In related work} \cite{H3a, ammari2017double}, \hl{analogous quantities, without the quasi-periodic assumption of $V_i^\alpha$, have been used to study the resonant frequencies of finite systems of resonators.} The quasi-periodic capacitance matrix $C^\alpha$ is defined as
$$C^\alpha =  \begin{pmatrix} C_{11}^\alpha & C_{12}^\alpha\\ C_{21}^\alpha & C_{22}^\alpha \end{pmatrix}.$$
The following lemma gives an equivalent description of $C_{ij}^\alpha$.
\begin{lemma}
For $\alpha \in Y^*_0$, the quasi-periodic capacitance coefficients $C_{ij}^\alpha$ are given by
\begin{equation}\label{eq:Cpsi}
C_{ij}^\alpha = - \int_{\partial D_i} \psi_j \dx \sigma,\quad i,j=1, 2,
\end{equation}
with $\psi_j$ as defined in \eqnref{psi_def}.
\end{lemma}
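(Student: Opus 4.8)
The plan is to express the capacitance coefficient $C_{ij}^\alpha = \int_{Y\setminus D} \overline{\nabla V_i^\alpha}\cdot \nabla V_j^\alpha \dx x$ through a boundary integral by first rewriting $V_i^\alpha$ itself as a single layer potential. The key observation is that $V_i^\alpha$ is harmonic in $\R^2\setminus \C$, equals $\delta_{ij}$ on each $\p D_j$, and is $\alpha$-quasi-periodic; this is exactly the behaviour one gets by applying $\Scal_D^{\alpha,0}$ to the density $\psi_i = (\Scal_D^{\alpha,0})^{-1}[\chi_{\p D_i}]$ defined in \eqnref{psi_def}. Indeed, $\Scal_D^{\alpha,0}[\psi_i]$ is $\alpha$-quasi-periodic, harmonic away from $\p D$, and by construction satisfies $\Scal_D^{\alpha,0}[\psi_i]|_{\p D} = \chi_{\p D_i}$, i.e. it takes the boundary value $\delta_{ij}$ on $\p D_j$. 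So first I would argue, by uniqueness for this quasi-periodic exterior Dirichlet problem, that $V_i^\alpha = \Scal_D^{\alpha,0}[\psi_i]$ in $\R^2\setminus \C$ (and that the interior values are the constants $\delta_{ij}$, so $\nabla V_i^\alpha = 0$ inside $D$).

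Next I would apply Green's identity (integration by parts) to the capacitance integral. Since $V_i^\alpha$ and $V_j^\alpha$ are both harmonic in $Y\setminus D$,
\begin{equation*}
C_{ij}^\alpha = \int_{Y\setminus D} \overline{\nabla V_i^\alpha}\cdot \nabla V_j^\alpha \dx x = \int_{\p(Y\setminus D)} \overline{V_i^\alpha}\, \frac{\p V_j^\alpha}{\p \nu} \dx\sigma,
\end{equation*}
with outward normal. The crucial point is that the contributions from the outer boundary $\p Y$ cancel: by the $\alpha$-quasi-periodicity of $V_i^\alpha$ and $V_j^\alpha$, the values on opposite faces of the fundamental cell differ by conjugate phase factors whose product is $1$, while the outward normals are opposite, so the integrals over paired faces cancel. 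Hence only the integral over $\p D$ survives, and there (with $\nu$ pointing out of $Y\setminus D$, i.e. into $D$) the exterior Dirichlet data $\overline{V_i^\alpha} = \delta_{ij}$ restricts the boundary integral to $\p D_i$, giving $C_{ij}^\alpha = -\int_{\p D_i} \frac{\p V_j^\alpha}{\p \nu}\big|_+ \dx\sigma$, where the sign accounts for orienting $\nu$ outward from $D$.

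Finally I would evaluate the remaining normal derivative using the jump relation \eqnref{jump1}. Writing $V_j^\alpha = \Scal_D^{\alpha,0}[\psi_j]$ and using $\frac{\p}{\p\nu}\Scal_D^{\alpha,0}[\psi_j]|_+ = (\tfrac12 I + (\Kcal_D^{-\alpha,0})^*)[\psi_j]$, we get $C_{ij}^\alpha = -\int_{\p D_i}(\tfrac12 I + (\Kcal_D^{-\alpha,0})^*)[\psi_j]\dx\sigma$. Now I would invoke Lemma \ref{lem:layer}(ii), equation \eqnref{eq:Kint}, which says $\int_{\p D_i}(-\tfrac12 I + (\Kcal_D^{-\alpha,0})^*)[\psi_j]\dx\sigma = 0$; subtracting this identity leaves $C_{ij}^\alpha = -\int_{\p D_i}\psi_j\dx\sigma$, which is exactly \eqnref{eq:Cpsi}.

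The routine parts are the Green's-identity computation and the jump-relation bookkeeping. I expect the main obstacle to be the careful justification that $V_i^\alpha = \Scal_D^{\alpha,0}[\psi_i]$ — this requires a uniqueness statement for the quasi-periodic exterior problem (which uses the invertibility of $\Scal_D^{\alpha,0}$ on $H^1(\p D)$ for $\alpha\in Y^*_0$ stated earlier) — together with the sign/orientation tracking in the boundary integral and the verification that the $\p Y$ contributions genuinely vanish under quasi-periodicity. The latter is the one place where the $\alpha$-quasi-periodicity, rather than periodicity, must be used correctly, since the phase factors from $\overline{V_i^\alpha}$ and $\frac{\p V_j^\alpha}{\p\nu}$ must combine to cancel across paired faces.
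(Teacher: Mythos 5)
Your proposal is correct and follows essentially the same route as the paper: represent $V_i^\alpha$ as $\Scal_D^{\alpha,0}[\psi_i]$, integrate by parts, kill the $\p Y$ contribution by quasi-periodicity and the $\p D_j$ ($j\neq i$) contribution by the Dirichlet data, then apply the jump relation \eqnref{jump1}. The only cosmetic difference is the last step, where you invoke Lemma \ref{lem:layer}(ii) to evaluate $\int_{\p D_i}\bigl(\tfrac12 I + (\Kcal_D^{-\alpha,0})^*\bigr)[\psi_j]\dx\sigma$, while the paper uses part (i) of the same lemma (namely $(\Kcal_D^{-\alpha,0})^*[\psi_j]=\tfrac12\psi_j$); both yield $-\int_{\p D_i}\psi_j\dx\sigma$ immediately.
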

\begin{proof}
We will use the general fact that for quasi-periodic functions $v_1, v_2$ we have \cite[eq. (2.298)]{surv235}
\begin{equation} \label{eq:quasiint}
\int_{\p Y}  \overline{ \pd{v_1}{\nu}}v_2\dx\sigma = 0.
\end{equation}	
\hl{With $\psi_i$ as defined in} \eqnref{psi_def}, \hl{we have $V_i^\alpha = \Scal_D^{\alpha,0}[\psi_i]$ outside $\C$.} Then, using integration by parts, we have
$$ C_{ij}^\alpha = -\left(\int_{\p Y} \frac{\p V_j^\alpha }{\p \nu} \overline{V_i^\alpha}   \dx \sigma + \int_{\p D_j} \frac{\p V_j^\alpha }{\p \nu} \overline{V_i^\alpha}   \dx \sigma + \int_{\p D_i}\frac{\p V_j^\alpha }{\p \nu}  \overline{V_i^\alpha}  \dx \sigma\right).$$
The first integral vanishes due to \eqnref{eq:quasiint} while the second integral vanishes since $ \overline{V_i^\alpha} = 0$ on $\p D_j$. Then, since $ \overline{V_i^\alpha} = 1$ on $\p D_i$,
$$ C_{ij}^\alpha = - \int_{\p D_i}\frac{\p V_j^\alpha }{\p \nu} \dx \sigma.$$
From Lemma \ref{lem:layer} we have $\left(\frac{1}{2}I + (\Kcal_D^{\alpha,0})^*\right)[\psi_j] = \psi_j$. Then, using the jump relations \eqnref{jump1}, we find the desired expression for $C_{ij}^\alpha$.
\end{proof}

%%%%%%%%%%%%%%%%%%%%%%%%%%%%%%%%%%%%%%%%%%%%%%%%%%%%%%%%%%%%%%%%%%%%%%%%%%%%%%%%%%%%%%%%
\section{High-contrast subwavelength bands} \label{sec:pre}
In this section, we investigate the asymptotic behaviour of the band structure in the case of small $\delta$. As we shall see, this asymptotic limit enables explicit, approximate, computations of the band functions. The main results are given in Theorem \ref{thm:delta} and in equation (\ref{eq:asymptotic}). Throughout this section, we fix $ \alpha^*= \alpha^*_1 = \frac{2\alpha_1+\alpha_2}{3}$ and consider $\alpha \in Y^*_0$.

From \cite[Theorem 5.13]{surv235}, we know that the solution to \eqnref{HP1} can be represented using
the single layer potentials $\Scal_D^{\alpha, k_b}$ and $\Scal_D^{\alpha,k}$ as 
follows:
\begin{equation}\label{eq:rep}
u(\Bx) = \begin{cases}
\Scal_D^{\alpha,k_b} [\phi](\Bx),\quad \Bx\in D, \\
\Scal_D^{ \alpha,k}[\psi](\Bx),\quad \Bx\in Y\setminus\overline{D},\\
\end{cases}
\end{equation}
where, due to the jump conditions \eqnref{jump1}, the pair $(\phi,\psi)\in L^2(\p D)\times L^2(\p D)$ satisfies \beq\label{phipsi} \ \left\{
\begin{array}{l}
\ds \Scal^{\alpha, k_b}_D[\phi] - \Scal_D^{\alpha,k}[\psi] = 0\\
\nm
\ds \left (-\frac{1}{2}I+(\mathcal{K}_{ D}^{-\alpha, k_b})^*\right)[\phi]
-\delta\left (\frac{1}{2}I+(\mathcal{K}_{ D}^{-\alpha,k})^*\right)[\psi]=0
\end{array}\right.
\quad\mbox{on }\p D. \eeq  
We denote by
 \beq \label{eq:A}
 \Acal_\delta^{\alpha,\omega} := 
 \begin{pmatrix}
 \Scal^{\alpha, k_b}_D & -\Scal^{\alpha, k}_D\\
-\frac{1}{2}I+(\mathcal{K}_{D}^{-\alpha, k_b})^*  & -\delta \left (\frac{1}{2}I+(\mathcal{K}_{D}^{-\alpha,k})^*\right) 
 \end{pmatrix}.
 \eeq
We emphasise that $\Acal_{\delta}^{\alpha,\omega}$ is a function of $\omega$, since $k$ and $k_b$ depends on $\omega$. With this definition, \eqnref{HP1} is equivalent to
\begin{equation} \label{int_eq12}
\mathcal{A}_\delta^{\alpha,\omega} \begin{pmatrix} \phi \\ \psi \end{pmatrix}=0.
\end{equation}
It is well-known that the above integral equation has non-trivial solutions for some discrete frequencies $\omega$. 
These can be viewed as the characteristic values of the operator-valued analytic function $\mathcal{A}_\delta^{\alpha,\omega}$ (with respect to $\omega$); see \cite{surv235,akl} for the definition and properties of characteristic values. The Bloch resonant frequencies $\omega_j^\alpha$ are precisely the positive characteristic values. Moreover, from the original differential problem \eqnref{HP1}, we observe that the characteristic values are symmetric around the origin: if $\omega$ is a characteristic value, we have that $-\omega$ is also a characteristic value.

\subsection{Asymptotic computation of the band structure}
Here we compute the asymptotic band structure for a general $\alpha \in Y^*_0$, not necessarily close to a Dirac point.
\begin{lemma} \label{lem:first}
For $\alpha \in Y_0^*$, there are precisely two Bloch resonant frequencies $\omega_j^\alpha = \omega_j^\alpha(\delta), \ j=1,2,$ such that $\omega_j^\alpha(0)=0$ and $\omega_j^\alpha$ depends on $\delta$ continuously. 
\end{lemma}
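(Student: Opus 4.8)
The plan is to realise the Bloch resonant frequencies as the positive characteristic values of the holomorphic operator family $\omega \mapsto \Acal_\delta^{\alpha,\omega}$ from \eqnref{eq:A}, and to track them as $\delta$ decreases to $0$ by means of Gohberg--Sigal theory (the generalised Rouch\'e theorem for operator-valued functions). Two structural observations drive everything. First, since $G^{\alpha,k}$ enters $\Acal_\delta^{\alpha,\omega}$ only through $k^2=\omega^2/v^2$ and $k_b^2=\omega^2/v_b^2$, the family is even in $\omega$; hence its characteristic values come in pairs $\pm\omega$, and it is natural to work in the variable $\lambda=\omega^2$. Second, the only explicit $\delta$-dependence is the scalar factor $\delta$ multiplying the $(2,2)$-block, so on any fixed contour in the $\omega$-plane one has $\|\Acal_\delta^{\alpha,\omega}-\Acal_0^{\alpha,\omega}\|\to 0$ uniformly as $\delta\to 0$. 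This makes $\Acal_0^{\alpha,\omega}$ the natural unperturbed family.

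Next I would analyse $\Acal_0^{\alpha,\omega}$ near $\omega=0$. Because $\Scal_D^{\alpha,k}$ is invertible for $\alpha\in Y^*_0$ and $\omega$ small, multiplying on the right by a holomorphic invertible factor block-diagonalises the family:
\[
\Acal_0^{\alpha,\omega}\begin{pmatrix} 0 & I \\ -I & (\Scal_D^{\alpha,k})^{-1}\Scal_D^{\alpha,k_b}\end{pmatrix}
=\begin{pmatrix}\Scal_D^{\alpha,k} & 0 \\ 0 & B(\omega)\end{pmatrix},\qquad B(\omega):=-\tfrac12 I+(\Kcal_D^{-\alpha,k_b})^*.
\]
Since multiplication by a holomorphic invertible family and by the invertible block $\Scal_D^{\alpha,k}$ leaves characteristic-value multiplicities unchanged, the multiplicity of $\omega=0$ as a characteristic value of $\Acal_0^{\alpha,\omega}$ equals that of $B(\omega)$. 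Using \eqnref{eq:SKexp} I expand $B(\omega)=B_0+\tfrac{\omega^2}{v_b^2}\Kcal_{D,1}^{\alpha,0}+O(\omega^4)$ with $B_0=-\tfrac12 I+(\Kcal_D^{-\alpha,0})^*$, whose kernel is the two-dimensional space $\mathrm{span}\{\psi_1,\psi_2\}$ by Lemma \ref{lem:layer}(i), and whose cokernel is spanned by the functionals $\phi\mapsto\int_{\p D_j}\phi\dx\sigma$ in view of Lemma \ref{lem:layer}(ii). A Lyapunov--Schmidt reduction then reduces invertibility of $B(\omega)$ to that of a $2\times 2$ matrix $M(\lambda)=\lambda\,M_1+O(\lambda^2)$ in $\lambda=\omega^2$, whose leading coefficient has entries $\tfrac{1}{v_b^2}\int_{\p D_j}\Kcal_{D,1}^{\alpha,0}[\psi_i]\dx\sigma=-\tfrac{1}{v_b^2}\int_{D_j}\Scal_D^{\alpha,0}[\psi_i]\dx x=-\tfrac{|D_j|}{v_b^2}\,\delta_{ij}$, where I use Lemma \ref{lem:layer}(iii) together with the fact that $\Scal_D^{\alpha,0}[\psi_i]=\delta_{ij}$ on $D_j$. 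Thus $M_1$ is diagonal and invertible, so $\lambda=0$ is a characteristic value of the reduced problem of multiplicity exactly $2$, and $\omega=0$ is an \emph{isolated} characteristic value of $\Acal_0^{\alpha,\omega}$ of multiplicity $4$.

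Finally I would transfer this count to $\delta>0$. Fixing a small circle $\gamma$ about $\omega=0$ on which $\Acal_0^{\alpha,\omega}$ is invertible, the uniform convergence above guarantees that for $\delta$ small enough $\Acal_\delta^{\alpha,\omega}$ is invertible on $\gamma$ and, by the generalised Rouch\'e theorem, has total characteristic-value multiplicity $4$ inside $\gamma$. Evenness in $\omega$ forces these to appear as $\pm\omega_1^\alpha,\pm\omega_2^\alpha$, and the reality of the band functions ensures the frequencies are real; the two positive ones are the claimed Bloch resonant frequencies. Continuity in $\delta$, and the vanishing $\omega_j^\alpha(0)=0$, then follow by reducing once more to the finite-dimensional characteristic equation of the perturbed reduced matrix, whose coefficients depend analytically on $\delta$ and whose roots $\lambda_j(\delta)=\omega_j^\alpha(\delta)^2$ therefore depend continuously on $\delta$ and collapse to $0$ as $\delta\to0$.

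The main obstacle is the multiplicity computation at $(\delta,\omega)=(0,0)$: one must show that the first-order-in-$\omega^2$ correction to $B_0$ acts nondegenerately on the two-dimensional kernel, i.e.\ that $M_1$ is invertible. This is precisely where the identities of Lemma \ref{lem:layer}(ii)--(iii) are essential, as they pin the reduced matrix down to the explicit diagonal form above. The accompanying technical point is to justify rigorously that the block reduction preserves multiplicities, so that the count for $B(\omega)$ genuinely controls that for the full system $\Acal_0^{\alpha,\omega}$.
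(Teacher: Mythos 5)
Your proof is correct and shares the paper's skeleton: realise the Bloch frequencies as characteristic values of $\omega\mapsto\Acal_\delta^{\alpha,\omega}$, show $\omega=0$ is a characteristic value of the unperturbed family $\Acal_0^{\alpha,\omega}$ of multiplicity $4$, transfer the count to small $\delta>0$ by the generalised Rouch\'e theorem, and use the $\pm\omega$ symmetry (and reality of the bands) to extract the two positive frequencies. Where you genuinely diverge is the multiplicity computation at $(\delta,\omega)=(0,0)$. The paper exhibits the kernel of $\Acal_0^{\alpha,0}$ as $\mathrm{span}\{\Psi_1,\Psi_2\}$ with $\Psi_i=(\psi_i,\psi_i)^T$ and shows directly that each $\Psi_i$ has rank $2$, by writing $\left(-\frac{1}{2}I+(\Kcal_D^{-\alpha,k_b})^*\right)[\psi_i]=\omega^2h(x,\omega)$ and using Lemma \ref{lem:layer}(iii) to see $h(\cdot,0)\not\equiv 0$. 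You instead block-diagonalise $\Acal_0^{\alpha,\omega}$ by an invertible holomorphic right factor (your factorisation checks out), reduce to the family $-\frac{1}{2}I+(\Kcal_D^{-\alpha,k_b})^*$, and run a Lyapunov--Schmidt reduction to the $2\times2$ matrix $M(\lambda)=\lambda M_1+O(\lambda^2)$ with $M_1=-v_b^{-2}\operatorname{diag}(|D_1|,|D_2|)$; the multiplicity $4$ then falls out of $\det M(\omega^2)$. Both routes hinge on exactly the same identity, Lemma \ref{lem:layer}(ii)--(iii) together with $\Scal_D^{\alpha,0}[\psi_i]=\delta_{ij}$ in $D_j$. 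What your version buys is a single finite-dimensional characteristic equation, jointly analytic in $(\omega,\delta)$, from which the continuity of $\omega_j^\alpha(\delta)$ follows at once (and which anticipates the capacitance-matrix reduction of Theorem \ref{thm:delta}); the paper instead reruns Rouch\'e's theorem for the continuity statement, which is marginally lighter on bookkeeping. The only point worth spelling out in a final write-up is the standard fact that multiplication by an invertible holomorphic family preserves characteristic-value multiplicities, and that the reduction for $\delta>0$ produces a lower-triangular (rather than diagonal) block form, which still suffices since the $(1,1)$ block $\Scal_D^{\alpha,k}$ is invertible. No gap.
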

\begin{proof}
In order to prove the lemma, we will apply the Gohberg-Sigal theory (see, for example, \cite{surv235, akl, Gohberg1971}). We will use the terminology and refer to the results presented in \cite[Chapter 1]{surv235}.

It is clear that $\omega=0$  is a characteristic value of 
$\Acal_0^{\alpha,\omega}$ because $$\Acal_0^{\alpha,0} = \begin{pmatrix}
 \Scal^{\alpha,0}_D & -\Scal^{\alpha, 0}_D\\
-\frac{1}{2}I+(\mathcal{K}_{D}^{-\alpha,0})^*  & 0
 \end{pmatrix}$$ has a non-trivial kernel of two dimensions, which is generated by 
\begin{equation} \label{eq:psi}
\Psi_1 = \begin{pmatrix}
\psi_1 \\
\psi_1
\end{pmatrix} 
\quad \text{and} \quad \Psi_2 = \begin{pmatrix}
\psi_2 \\
\psi_2
\end{pmatrix},
\end{equation}
where $\psi_1$ and $\psi_2$ are defined in \eqnref{psi_def}. From \eqnref{eq:SKexp}, we have that 
\begin{equation}\label{eq:sing}
\left(-\frac{1}{2}I+(\mathcal{K}_{D}^{-\alpha,k_b})^*\right)[\psi_i](x) = \omega^2 h(x,\omega), \quad x\in \p D, \quad i = 1,2,
\end{equation}
for some function $h$ which is holomorphic as a function of $\omega$ in a neighbourhood of $0$. Since
$$\int_{D_i} \Scal_D^{\alpha,0}[\psi_i](x) \dx x \neq 0, \ i=1,2,$$
it follows from \eqnref{eq:K1int} that $h(x,0)$ is not identically zero. Therefore, the rank of both $\Psi_1$ and $\Psi_2$ is 2. Since $\Psi_1$ and $\Psi_2$ are linearly independent, the multiplicity of the characteristic value $\omega=0$ is 4. 

The Neumann-Poincar\'e operator $(\Kcal_D^{-\alpha,0})^* : L^2(\partial D) \rightarrow L^2(\partial D)$ is well-known to be a compact operator \cite{surv235}, so $-\frac{1}{2}I+(\mathcal{K}_{D}^{-\alpha,0})^*$ is Fredholm of index zero. Since  $ \Scal^{\alpha,0}_D$ is invertible, also $\Acal_0^{\alpha,0}$ is Fredholm of index zero. Since $(\Kcal_D^{-\alpha,\omega})^*$ and $\Scal_D^{\alpha,\omega}$ are holomorphic as functions of $\omega$ in a neighbourhood of $0$, it follows that $\Acal_0^{\alpha,\omega}$ is of Fredholm type.

Let $V\subset \mathbb{C}$ be a disk around $0$ with a small enough radius, chosen such that $\Acal_0^{\alpha,\omega}$ is invertible on $\p V$ and $\omega = 0$ is the only characteristic value in $V$. From \cite[Lemma 1.11]{surv235}, it follows that $\Acal_0^{\alpha,\omega}$ is normal with respect to $\p V$. 

Now, we turn to the full operator $\Acal_\delta^{\alpha,\omega}$. It is clear that 
\begin{align*}
\Acal_\delta^{\alpha,\omega} =& \Acal_0^{\alpha,\omega} + \begin{pmatrix}0 & 0 \\ 0 & - \delta\left(\frac{1}{2}I + (\Kcal_D^{-\alpha,k})^*\right)\end{pmatrix} \\
:=&  \Acal_0^{\alpha,\omega} + A^{(1)}(\omega,\delta),
\end{align*}
where $ A^{(1)}(\omega,\delta)$, as a function of $\omega$, is holomorphic in $V$ and continuous up to $\partial V$. For small enough $\delta$ we have
$$\left\|\left(\Acal_0^{\alpha,\omega}\right)^{-1}A^{(1)}(\omega,\delta) \right\|_{\B\big((L^2(\p D))^2,(L^2(\p D))^2\big)} < 1, \quad \omega \in \p V.$$
Hence, the generalization of Rouch\'e's theorem \cite[Theorem 1.15]{surv235} shows that $\Acal_\delta^{\alpha,\omega}$ has 4 characteristic values inside $V$, for small enough $\delta$. Since the characteristic values are symmetric around the origin, is clear that two of these, namely $\omega_1^\alpha$ and $\omega_2^\alpha$, have positive real parts, while two characteristic values have negative real parts.

The fact that $\omega_1^\alpha(\delta) $ and $\omega_2^\alpha(\delta) $ are continuous in $\delta$ can be deduced in a similar way: if $U \in \mathbb{C}$ is a neighbourhood of $\omega_i^\alpha(\delta_1), \ i=1,2$, we can write  
\begin{align*}
\Acal_{\delta_2}^{\alpha,\omega} = \Acal_{\delta_1}^{\alpha,\omega} + (\delta_1-\delta_2) \begin{pmatrix}0 & 0 \\ 0 & \frac{1}{2}I + (\Kcal_D^{-\alpha,k})^*\end{pmatrix},
\end{align*}
and from the generalization of Rouch\'e's theorem it follows that $\omega^\alpha_i(\delta_2)\in U$ when $|\delta_1-\delta_2|$ is small enough.
\end{proof}

\begin{theorem} \label{thm:delta}
The band functions $\omega_j^\alpha=\omega_j^\alpha(\delta),~j=1,2$ of $\mathcal{A}_\delta^{\alpha,\omega}$ can be approximated as
$$ \omega_j^\alpha= \sqrt{\frac{\delta \lambda_j^\alpha }{|D_1|}}  v_b + O(\delta),$$
uniformly for $\alpha \in Y^*_0$, where $|D_1|$ is the volume of one resonator and $\lambda_j^\alpha,~j=1,2$ are the eigenvalues of the quasi-periodic capacitance matrix $C^\alpha$.
\end{theorem}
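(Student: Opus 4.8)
The plan is to reduce the continuous spectral problem \eqnref{int_eq12} to a $2\times 2$ eigenvalue problem for the capacitance matrix $C^\alpha$, by projecting onto the two-dimensional kernel identified in Lemma \ref{lem:layer}. Throughout I work in the subwavelength regime, seeking characteristic values with $\omega = O(\delta^{1/2})$ (to be verified a posteriori), so that $k^2 = \omega^2/v^2$ and $k_b^2 = \omega^2/v_b^2$ are both $O(\delta)$. First I would use the representation \eqnref{eq:rep} and the coupled system \eqnref{phipsi}. From the first equation $\Scal_D^{\alpha,k_b}[\phi] = \Scal_D^{\alpha,k}[\psi]$, together with the invertibility of $\Scal_D^{\alpha,0}$ and the expansion \eqnref{eq:SKexp}, one obtains $\phi = \psi + O(\delta)$ in $L^2(\p D)$, the leading discrepancy being $(k_b^2-k^2)(\Scal_D^{\alpha,0})^{-1}\Scal_{D,1}^{\alpha,0}[\psi]$.

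Next I would localise the densities. The second equation reads $(-\tfrac12 I + (\Kcal_D^{-\alpha,k_b})^*)[\phi] = \delta(\tfrac12 I + (\Kcal_D^{-\alpha,k})^*)[\psi]$, whose right-hand side is $O(\delta)$; expanding the left-hand side via \eqnref{eq:SKexp} and using $k_b^2 = O(\delta)$ shows $(-\tfrac12 I + (\Kcal_D^{-\alpha,0})^*)[\phi] = O(\delta)$. Since, by Lemma \ref{lem:layer}(i), the kernel of $-\tfrac12 I + (\Kcal_D^{-\alpha,0})^*$ is exactly $\mathrm{span}\{\psi_1,\psi_2\}$, and this operator is Fredholm of index zero with a uniform spectral gap on the complement for $\alpha \in Y_0^*$, I can write $\phi = c_1\psi_1 + c_2\psi_2 + \phi^\perp$ with $\|\phi^\perp\|_{L^2(\p D)} = O(\delta)$, and likewise $\psi = c_1\psi_1 + c_2\psi_2 + O(\delta)$, for some coefficient vector $\mathbf{c} = (c_1,c_2)$ normalised by $|\mathbf{c}| = 1$.

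The eigenvalue equation then comes from a solvability (Fredholm) condition: integrate the second equation of \eqnref{phipsi} over $\p D_i$ for $i=1,2$. Lemma \ref{lem:layer}(ii) annihilates the leading term, Lemma \ref{lem:layer}(iii) converts the $O(k_b^2)$ contribution into $-k_b^2\int_{D_i}\Scal_D^{\alpha,0}[\phi]\dx x$, and the identity $\int_{\p D_i}(\tfrac12 I + (\Kcal_D^{-\alpha,0})^*)[\psi]\dx\sigma = \int_{\p D_i}\psi\dx\sigma$ (again by (ii)) handles the $\delta$-term. Evaluating the leading integrals on the basis $\{\psi_1,\psi_2\}$ gives $\int_{D_i}\Scal_D^{\alpha,0}[\psi_j]\dx x = \delta_{ij}|D_i|$ (because $\Scal_D^{\alpha,0}[\psi_j] \equiv \delta_{ij}$ on $D_i$ by uniqueness of the harmonic extension) and $\int_{\p D_i}\psi_j\dx\sigma = -C_{ij}^\alpha$ by \eqnref{eq:Cpsi}; the symmetry $R_0 D_1 = D_2$ gives $|D_1| = |D_2|$. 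Collecting terms yields
$$ k_b^2\,|D_1|\,\mathbf{c} = \delta\, C^\alpha \mathbf{c} + O(\delta^2), $$
so that $k_b^2 |D_1|/\delta$ is within $O(\delta)$ of an eigenvalue $\lambda_j^\alpha$ of $C^\alpha$. Solving $\omega^2 = v_b^2 k_b^2$ and taking square roots gives $\omega_j^\alpha = v_b\sqrt{\delta\lambda_j^\alpha/|D_1|} + O(\delta)$, with the identification of the two branches from Lemma \ref{lem:first} made by matching; uniformity in $\alpha$ follows from the uniform bounds on $\Scal_{D,j}^{\alpha,0}, \Kcal_{D,j}^{\alpha,0}$ recorded before \eqnref{eq:SKexp}.

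The main obstacle is making the informal projection rigorous and self-consistent: one must control $\phi^\perp$ and close the a posteriori scaling $\omega = O(\delta^{1/2})$ simultaneously, using the Fredholm structure of $\Acal_\delta^{\alpha,\omega}$ rather than a naive expansion. This is most safely done within the Gohberg--Sigal framework already invoked in Lemma \ref{lem:first}, viewing $\omega_j^\alpha$ as characteristic values of $\Acal_\delta^{\alpha,\omega}$ and reducing, via the generalised argument principle, to the vanishing of a $2\times 2$ determinant whose entries carry the asymptotics above. Extra care is needed to keep all error estimates uniform across $Y_0^*$, in particular near the Dirac points where $C^\alpha$ has a degenerate eigenvalue and the two branches cannot be separated by a spectral gap; this degeneracy is precisely why the error is stated only as $O(\delta)$ rather than the sharper order the formal computation would suggest.
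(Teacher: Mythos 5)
Your proposal follows essentially the same route as the paper's proof: use the invertibility of $\Scal_D^{\alpha,0}$ on the first equation of \eqnref{phipsi} to get $\psi = \phi + O(\omega^2)$, decompose $\phi$ into its component in $\ker\bigl(-\tfrac{1}{2}I + (\Kcal_D^{-\alpha,0})^*\bigr) = \mathrm{span}\{\psi_1,\psi_2\}$ plus an $O(\omega^2+\delta)$ remainder, and then integrate over $\p D_i$ using Lemma \ref{lem:layer}(ii)--(iii) and \eqnref{eq:Cpsi} to reduce to the $2\times 2$ eigenvalue problem $\tfrac{\omega^2|D_1|}{v_b^2}\mathbf{c} = \delta C^\alpha \mathbf{c} + O(\omega^4+\delta^2)$. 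The only cosmetic difference is that you impose the scaling $\omega = O(\delta^{1/2})$ up front and verify it a posteriori, whereas the paper keeps $\omega$ and $\delta$ as independent small parameters (with errors $O(\omega^4+\delta^2)$) and invokes Lemma \ref{lem:first} for the existence and count of the two subwavelength branches.
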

\pf \hl{We seek solutions $(\phi, \psi)$ to the integral equation} \eqnref{int_eq12}, \hl{normalized such that $\|\phi\|_{L^2(\p D)} = 1$ and $\|\psi\|_{L^2(\p D)} = 1$. Using the asymptotic expansions} \eqnref{eq:SKexp}, \hl{we find that $\phi$ and $\psi$ satisfy}
\begin{align}
\Scal_D^{\alpha,0}[\phi]- \Scal_D^{\alpha, 0}[\psi] &= O(\omega^2), \nonumber \\
\left(-\frac{1}{2}I + (\Kcal_D^{-\alpha,0})^* + k_b^2 \Kcal_{D,1}^{\alpha,0}\right)[\phi]-\delta \left(\frac{1}{2}I+(\mathcal{K}_{D}^{\alpha,0})^*\right)[\psi]&=O(\omega^4 +\delta^2),\label{eq_line2}
\end{align}
\hl{uniformly for $\alpha \in Y^*_0$, where the error terms are with respect to the norm in $L^2(\p D)$. Observe that $\Scal_D^{\alpha,0}$ is invertible, and by the inverse mapping theorem together with the fact that $Y^*_0$ is a closed set we have that $\|(\Scal_D^{\alpha,0})^{-1}\|_{\B(L^2(\p D),H^1(\p D))}$ is uniformly bounded for $\alpha \in Y^*_0$.} Then, we get
\begin{equation*} 
\psi= \phi+O(\omega^2),
\end{equation*}
uniformly for $\alpha \in Y^*_0$. Inserting the above approximation into \eqnref{eq_line2}, we obtain that
\begin{equation}
\left(-\frac{1}{2}I + (\Kcal_D^{-\alpha,0})^* + k_b^2 \Kcal_{D,1}^{\alpha,0}\right)[\phi]-\delta \left(\frac{1}{2}I+(\mathcal{K}_{D}^{\alpha,0})^*\right)[\phi]=
O(\omega^4 +\delta^2),  \label{int_eq_redc}
\end{equation}
uniformly for $\alpha \in Y^*_0$. Recall that 
$ \ker \left(-\frac{1}{2}I + (\Kcal_D^{-\alpha,0})^*\right)$ is spanned by $\psi_1$ and $\psi_2$. Then we write $\phi$ as
\begin{equation}
\mathhl{\phi= a \psi_1 + b\psi_2 + \varphi,} \label{phi_approx}
\end{equation}
where $\varphi$ is orthogonal to $\mathrm{span}\{ \psi_1,\psi_2\}$ in $L^2(\p D)$. We then have from \eqnref{int_eq_redc} that
$$
\left(-\frac{1}{2}I + (\Kcal_D^{-\alpha,0})^*\right)[\varphi] = O(\omega^2 + \delta).
$$
uniformly for $\alpha \in Y^*_0$. Since $\left(-\frac{1}{2}I + (\Kcal_D^{-\alpha,0})^*\right)$, restricted to the orthogonal complement of $\mathrm{span}\{ \psi_1,\psi_2\}$, is invertible with bounded inverse, it follows that, in the $L^2(\p D)$-norm,
$$
\varphi = O(\omega^2 + \delta),
$$
uniformly for $\alpha \in Y^*_0$. Moreover, find that $|a| + |b| > 0$. Now, we substitute \eqnref{phi_approx} into \eqnref{int_eq_redc} and integrate around $\partial D_i$ for $i = 1,2$. Then, using \eqnref{eq:K1int}, we get
\begin{align*}
-\frac{\omega^2 |D_1|}{v_b^2} a +\delta (a C_{11}^\alpha+b C_{12}^\alpha)= O(\omega^4+\delta^2),\\
-\frac{\omega^2 |D_1|}{v_b^2} b +\delta (a C_{21}^\alpha+b C_{22}^\alpha)= O(\omega^4+\delta^2),
\end{align*}
uniformly for $\alpha \in Y^*_0$. Therefore, $\frac{\omega^2 |D|}{\delta v_b^2}$ approximates the eigenvalues of the quasi-periodic capacitance matrix. This completes the proof. 
\qed

\subsection{Asymptotic band structure close to Dirac points}
Theorem \ref{thm:delta} gives an asymptotic formula of the band functions in terms of $\delta$. In this section, we will investigate the behaviour of this approximation for $\alpha$ close to the Dirac points. \hl{For $\alpha \in Y^*$, we define two transformations $T_1^\alpha$ and $T_2$ of $\alpha$-quasi-periodic functions in $Y$ by}
\begin{equation*}\label{t1t2}
(T_1^\alpha f)(x):= \begin{cases} e^{-\iu\alpha\cdot l_1} f(R_1x), &x\in Y_1,\\
e^{-2\iu\alpha\cdot l_1} f(R_2x), &x\in Y_2, \end{cases} \qquad (T_2f)(x):= \overline{f(2x_0 - x)}.
\end{equation*} 
Observe that $T_1^\alpha f$ is well-defined on $\partial Y_1 \cap \partial Y_2$. \hl{For any $\alpha \in Y^*$, we have that $T_2f$ is $\alpha$-quasi-periodic, while at $\alpha = \alpha^*$ we have that $T_1^{\alpha^*} f$ is $\alpha^*$-quasi-periodic.} We will denote $T_1 := T_1^{\alpha^*}$, and define  $\tau:= e^{2\pi \iu /3}$. 

We remark that the quasi-periodic capacitance matrix $C^\alpha$ is Hermitian for any $\alpha$. At Dirac points, the following result holds.
\begin{lemma}
	At the Dirac points, the quasi-periodic capacitance matrix is a constant multiple of the identity matrix.
\end{lemma}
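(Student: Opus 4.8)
The plan is to exploit the two symmetry transformations $T_1$ and $T_2$ introduced above, which generate the point-group action fixing the Dirac point $\alpha^*$. The first step is to record how the capacitance potentials $V_i^{\alpha^*}$ transform under these maps. Since $R_1 D_1 = D_1$ and $R_2 D_2 = D_2$, a direct evaluation of the boundary data of $T_1 V_i^{\alpha^*}$ on $\p D_1$ and $\p D_2$ shows, using $\alpha^* \cdot l_1 = 4\pi/3$, that $T_1 V_1^{\alpha^*}$ equals $\tau$ on $\p D_1$ and $0$ on $\p D_2$, while $T_1 V_2^{\alpha^*}$ equals $0$ on $\p D_1$ and $\tau^2$ on $\p D_2$. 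As $T_1 V_i^{\alpha^*}$ is harmonic in $\R^2\setminus\C$ and $\alpha^*$-quasi-periodic, uniqueness of the quasi-periodic exterior Dirichlet problem (which follows from the energy identity together with the vanishing of the boundary term over $\p Y$, cf. \eqnref{eq:quasiint}) yields the eigenrelations
$$ T_1 V_1^{\alpha^*} = \tau\, V_1^{\alpha^*}, \qquad T_1 V_2^{\alpha^*} = \tau^2\, V_2^{\alpha^*}. $$
An analogous computation, this time using $R_0 D_1 = D_2$ together with the conjugation built into $T_2$, gives $T_2 V_1^{\alpha^*} = V_2^{\alpha^*}$ and $T_2 V_2^{\alpha^*} = V_1^{\alpha^*}$.

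The second step is to show that the sesquilinear energy form defining $C^{\alpha^*}$ is compatible with these transformations. The key geometric observation is that $Y_1$ and $Y_2$ are equilateral triangles with centroids $x_1$ and $x_2$, so $R_1 Y_1 = Y_1$ and $R_2 Y_2 = Y_2$, while $Y$ is a rhombus with centre $x_0$, so $R_0 Y = Y$ and $R_0 D = D$. Changing variables by the relevant rotation on each piece of $Y\setminus D$, and using that rotations preserve the Euclidean inner product while the quasi-periodic phases have modulus one, I would establish, for $\alpha^*$-quasi-periodic $f,g$,
$$ \int_{Y\setminus D}\overline{\nabla(T_1 f)}\cdot\nabla(T_1 g)\dx x = \int_{Y\setminus D}\overline{\nabla f}\cdot\nabla g\dx x, \qquad \int_{Y\setminus D}\overline{\nabla(T_2 f)}\cdot\nabla(T_2 g)\dx x = \overline{\int_{Y\setminus D}\overline{\nabla f}\cdot\nabla g\dx x}, $$
the complex conjugation on the right of the second identity arising from the conjugation in the definition of $T_2$.

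Combining the two steps finishes the proof. Applying the $T_1$-identity with $f=V_i^{\alpha^*}$, $g=V_j^{\alpha^*}$ and inserting the eigenrelations gives $C_{ij}^{\alpha^*} = \overline{\mu_i}\mu_j\, C_{ij}^{\alpha^*}$ with $\mu_1=\tau$, $\mu_2=\tau^2$; since $\overline{\mu_1}\mu_2 = \overline{\mu_2}\mu_1 = \tau \neq 1$, the off-diagonal entries $C_{12}^{\alpha^*}$ and $C_{21}^{\alpha^*}$ vanish. Applying the $T_2$-identity with $f=g=V_2^{\alpha^*}$ and using $T_2 V_2^{\alpha^*}=V_1^{\alpha^*}$ gives $C_{11}^{\alpha^*} = \overline{C_{22}^{\alpha^*}}$, and since $C_{ii}^{\alpha^*} = \int_{Y\setminus D}|\nabla V_i^{\alpha^*}|^2\dx x$ is real, we conclude $C_{11}^{\alpha^*}=C_{22}^{\alpha^*}$. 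Hence $C^{\alpha^*} = C_{11}^{\alpha^*} I$, a constant multiple of the identity. The main obstacle is the careful bookkeeping in the second step: confirming that each rotation maps $Y\setminus D$ onto itself so the change of variables is legitimate, and correctly tracking the interplay of the quasi-periodic phases, the orthogonal action on the gradients, and—for $T_2$—the complex conjugation; the cube-root-of-unity algebra delivering the conclusion is then routine.
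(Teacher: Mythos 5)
Your proposal is correct and follows essentially the same route as the paper: the $T_1$-eigenrelations $T_1V_1^{\alpha^*}=\tau V_1^{\alpha^*}$, $T_1V_2^{\alpha^*}=\tau^2V_2^{\alpha^*}$ together with invariance of the energy form give $C_{12}^{\alpha^*}=\tau C_{12}^{\alpha^*}=0$, and the $T_2$-symmetry $T_2V_1^{\alpha}=V_2^{\alpha}$ gives $C_{11}^{\alpha^*}=C_{22}^{\alpha^*}$. You simply make explicit several steps the paper leaves as ``one can check'' (uniqueness of the quasi-periodic Dirichlet problem, $R_1Y_1=Y_1$, $R_2Y_2=Y_2$, $R_0(Y\setminus D)=Y\setminus D$, and the conjugation in the $T_2$ energy identity), all of which are correct.
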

\pf Since $T_2V_1^{\alpha}=V_2^{\alpha}$, we have 
$$C_{11}^{\alpha}=\int_{Y\setminus D} \overline{\nabla V_1^\alpha}\cdot \nabla V_1^\alpha \dx x = \int_{Y\setminus D} \overline{\nabla T_2 V_1^\alpha}\cdot \nabla T_2V_1^{\alpha} \dx x =C_{22}^{\alpha}.$$
Hence $C_{11}^{\alpha}=C_{22}^{\alpha}$ for any $\alpha \in Y^*_0$ and in particular $C_{11}^{\alpha^*}=C_{22}^{\alpha^*}$. We can also check that
$$ T_1V_1^{\alpha^*}= \tau V_1^{\alpha^*},~T_1 V_2^{\alpha^*} = \tau^2 V_2^{\alpha^*}.$$
Then, it follows that
$$ C_{12}^{\alpha^*}=\int_{Y\setminus D}\overline{\nabla V_1^{\alpha^*}} \cdot \nabla V_2^{\alpha^*} \dx x = \int_{Y\setminus D} \overline{\nabla T_1V_1^{\alpha^*}} \cdot \nabla T_1V_2^{\alpha^*}\dx x= \tau C_{12}^{\alpha^*}.$$
Therefore, we have $C_{12}^{\alpha^*}=0$ and so, $C_{21}^{\alpha^*}=0$. 
\qed

Since the quasi-periodic capacitance matrix has a double eigenvalue at Dirac points, we have that $\omega_1^{\alpha^*}=\omega_2^{\alpha^*} + O(\delta)$. The following proposition shows that in fact $\omega_1^{\alpha^*} = \omega_2^{\alpha^*}$ and that this is a double characteristic value.
\begin{prop} \label{prop_mul}
At the Dirac point $\alpha=\alpha^*$ and for $\delta$ small enough, the first Bloch resonant frequency $\omega^*:=\omega_1^{\alpha^*}$ is of multiplicity $2$, \ie{},  $\mathcal{A}_\delta^{\alpha^*,\omega^*}$ has a two dimensional kernel.
\end{prop}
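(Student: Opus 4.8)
The plan is to exploit the symmetry operators $T_1$ and $T_2$ to show that the kernel of $\mathcal{A}_\delta^{\alpha^*,\omega^*}$ cannot be one-dimensional, and hence, since the total multiplicity of the characteristic value $\omega^*$ is known, must be exactly two-dimensional. First I would verify that $T_1$ and $T_2$ descend to operators acting on the pair $(\phi,\psi)\in L^2(\p D)\times L^2(\p D)$, commuting with the operator-valued function $\mathcal{A}_\delta^{\alpha^*,\omega}$ at the Dirac point. The key point is that at $\alpha=\alpha^*$ the map $T_1$ is a genuine symmetry of the $\alpha^*$-quasi-periodic problem (this is exactly why the excerpt records $T_1^{\alpha^*}f$ is $\alpha^*$-quasi-periodic and why $R\alpha^*=\alpha^*$), so it commutes with $\Scal_D^{\alpha^*,k}$, $(\Kcal_D^{-\alpha^*,k})^*$, and their $k_b$-counterparts, and therefore with $\mathcal{A}_\delta^{\alpha^*,\omega}$. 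Consequently $T_1$ preserves the kernel $\ker \mathcal{A}_\delta^{\alpha^*,\omega^*}$.

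The heart of the argument is a representation-theoretic obstruction. Since $T_1^3=\mathrm{Id}$ (it is built from the order-three rotation $R$), $T_1$ acts on the finite-dimensional kernel with eigenvalues among $\{1,\tau,\tau^2\}$, where $\tau=e^{2\pi\iu/3}$. I would compute how $T_1$ acts on the leading-order kernel vectors: from Theorem~\ref{thm:delta} and the reduction in its proof, any kernel element is, to leading order, of the form $(\phi,\psi)$ with $\phi\approx a\psi_1+b\psi_2$, and the coefficient vector $(a,b)$ is an eigenvector of the capacitance matrix $C^{\alpha^*}$. Because $T_1V_1^{\alpha^*}=\tau V_1^{\alpha^*}$ and $T_1V_2^{\alpha^*}=\tau^2 V_2^{\alpha^*}$ (as established just above the proposition), the corresponding action on $\psi_1,\psi_2$ multiplies them by $\tau$ and $\tau^2$ respectively. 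Thus on the two-dimensional leading-order model, $T_1$ acts as $\mathrm{diag}(\tau,\tau^2)$, having one eigenvalue $\tau$ and one eigenvalue $\tau^2$.

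Now suppose for contradiction that $\ker\mathcal{A}_\delta^{\alpha^*,\omega^*}$ were one-dimensional, spanned by some $\Phi$. Then $T_1\Phi=\lambda\Phi$ for a single cube root of unity $\lambda$. Here I would bring in the second symmetry $T_2$: I expect $T_2$ to be an anti-linear involution that conjugates $T_1$ to $T_1^{-1}$ (equivalently, interchanges the $\tau$- and $\tau^2$-eigenspaces, consistent with $T_2$ swapping $D_1\leftrightarrow D_2$ and hence $\psi_1\leftrightarrow\psi_2$). Applying $T_2$ to the eigenrelation $T_1\Phi=\lambda\Phi$ and using the intertwining $T_2T_1=T_1^{-1}T_2$ (with complex conjugation on $\lambda$) would produce a second kernel vector $T_2\Phi$ that is a $T_1$-eigenvector with eigenvalue $\bar\lambda=\lambda^{-1}$. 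Unless $\lambda=1$, this forces $T_2\Phi$ to be linearly independent of $\Phi$, contradicting one-dimensionality; and the case $\lambda=1$ is excluded because the leading-order action of $T_1$ on the kernel has eigenvalues $\tau,\tau^2$ and no eigenvalue $1$. Hence $\ker\mathcal{A}_\delta^{\alpha^*,\omega^*}$ has dimension at least two; combined with the total multiplicity count of $\omega^*$ from Lemma~\ref{lem:first} (four characteristic values collapsing to $\pm\omega^*$, giving multiplicity two for $\omega^*$), the dimension is exactly two.

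\textbf{Main obstacle.} The delicate step is making the symmetry argument rigorous at finite $\delta$ rather than only at leading order. The eigenvalue computation $\mathrm{diag}(\tau,\tau^2)$ is clean in the leading-order capacitance model, but I must ensure the exact kernel at $\omega^*$ inherits the $T_1$-eigenstructure without any eigenvalue degenerating to $1$. The safe route is to argue entirely at the operator level: establish exactly that $T_1$ and $T_2$ commute (resp. anti-commute up to conjugation) with $\mathcal{A}_\delta^{\alpha^*,\omega^*}$ for all small $\delta$, so that the kernel is an exact $T_1$-invariant subspace on which $T_1^3=\mathrm{Id}$, and then use $T_2$ to pair eigenvalues. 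The only role of the asymptotics is to rule out the spurious possibility that the kernel eigenvalue is $1$ (which would allow a one-dimensional kernel); for this I would check that no nonzero $\alpha^*$-quasi-periodic kernel element can be $T_1$-invariant, using that a $T_1$-invariant density would correspond to a capacitance eigenvector fixed by $\mathrm{diag}(\tau,\tau^2)$, which is impossible. Verifying this incompatibility carefully — that the $1$-eigenspace of $T_1$ on the relevant function space does not intersect the kernel — is where the real work lies.
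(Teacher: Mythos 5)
Your proposal is correct and follows essentially the same route as the paper: bound the kernel dimension by $2$ via Lemma \ref{lem:first}, use that $T_1$ and $T_2$ preserve $\ker\mathcal{A}_\delta^{\alpha^*,\omega^*}$, exploit the antilinearity of $T_2$ to force the $T_1$-eigenvalue of a putative one-dimensional kernel to be real (hence $1$), and rule out eigenvalue $1$ by the leading-order structure $\phi \approx a\psi_1+b\psi_2$ on which $T_1$ acts as $\mathrm{diag}(\tau,\tau^2)$. One minor slip: the actual relation is $T_1T_2=T_2T_1$ (not $T_2T_1=T_1^{-1}T_2$), but since $T_2$ is antilinear this commutation already sends the $\lambda$-eigenspace to the $\bar\lambda$-eigenspace, which is exactly the conclusion you use.
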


\pf From Lemma \ref{lem:first} we know that there are only two band functions (counted with multiplicity) converging to $0$ as $\delta\rightarrow 0$. Hence, for small enough $\delta$, the dimension of $\ker\left(\mathcal{A}_\delta^{\alpha^*,\omega^*} \right)$ is at most $2$. Suppose that $\omega^*$ is of multiplicity $1$. Suppose also that
$$ \mathcal{A}_\delta^{\alpha^*,\omega^*} \begin{pmatrix} \phi \\ \psi \end{pmatrix}=0,$$
for a non-trivial pair $(\phi,\psi)$.
\hl{Denote}  $k^* = \omega^*/v$ and $k_b^* = \omega^*/v_b$ and let
\begin{equation*}
u(\Bx) = \begin{cases}
 \Scal_D^{\alpha^*,k_b^*} [\phi](\Bx),\quad \Bx\in D, \\
 \Scal_D^{ \alpha^*,k^*}[\psi](\Bx),\quad \Bx\in Y\setminus\overline{D}.\\
 \end{cases}
\end{equation*}

We can easily check that $T_1 u$ and $T_2 u$ also satisfy \eqnref{HP1}.
Then  
 $u,~T_1 u,$ and $T_2 u$
are linearly dependent and so, 
$$ T_1 u =c_1 u, ~T_2 u = c_2 u,$$
for some non-zero constants $c_1$ and $c_2$. 
Here, we observe that
$$T_1^3=I, ~T_2^2=I,~ T_1 T_2 f = T_2 T_1 f. $$ Then, it follows that 
 $ c_1 \in \{1, \tau, \tau^2\}$, $c_2=\pm 1$, and
$$  c_1c_2 u=T_1T_2 u = T_2 T_1 u =  \bar c_1 c_2 u.$$
Therefore, we get $c_1=1$, \ie{}, $T_1u = u$. However, taking the expression for $\phi$ in \eqnref{phi_approx}, along with the representation \eqnref{eq:rep}, we find that $u$ is constant on $D_i, i=1,2$ up to an error of order $O(\delta)$. This contradicts $T_1u = u$, which completes the proof. 
\qed
 
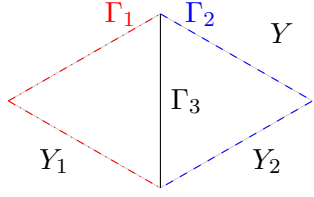
\begin{figure}[htb]
	\centering
	\begin{tikzpicture}[scale=2]
	\coordinate (a) at (1,{1/sqrt(3)});		
	\coordinate (b) at (1,{-1/sqrt(3)});	
	\coordinate (Y) at (1.8,0.45);
	\coordinate (c) at (2,0);
	\coordinate (x1) at ({2/3},0);
	\coordinate (x0) at (1,0);
	\coordinate (x2) at ({4/3},0);
	
	\draw (Y) node{$Y$};
	\draw[opacity=0.2] (0,0) -- (a);
	\draw[opacity=0.2] (0,0) -- (b);
	\draw[opacity=0.2] (a) -- (c) -- (b);
	
	\draw[red,dash dot] (0,0) -- (a);
	\draw[red,dash dot] (0,0) -- (b);
	\draw (a) -- (b) node[right,pos=0.5]{$\Gamma_3$};
	\draw[blue,dashed] (a) -- (c) -- (b);
	\draw (0.3,-0.4) node{$Y_1$};
	\draw (1.7,-0.4) node{$Y_2$};
	\draw[blue] (a) node[xshift=15pt]{$\Gamma_2$};
	\draw[red] (a) node[xshift=-15pt]{$\Gamma_1$};
	\end{tikzpicture}
	\caption{Fundamental domain $Y$ and the curves $\Gamma_1, \Gamma_2$ and $\Gamma_3$ used in the proof of Lemma \ref{lem:c}.} \label{fig:gamma}
\end{figure} 
We define the curves $\Gamma_1, \Gamma_2$ and $\Gamma_3$ illustrated in Figure \ref{fig:gamma} as 
$$ \Gamma_1:=\p Y_1 \cap \p Y, \quad \Gamma_2 := \p Y_2 \cap \p Y, \quad \Gamma_3:= \p Y_1\cap \p Y_2.$$
In the sequel, we will occasionally need the following additional assumption on the bubble geometry.
\begin{asump} \label{asump:g3}
	$D$ is symmetric with respect to $\Gamma_3$, \ie{} $D$ satisfies
	$R_3D = D$, 
	where $R_3$ is the reflection across $\Gamma_3$.
\end{asump}
\hl{In the next lemma, and throughout the remainder of this work, we will use bracketed subscripts to denote components of vectors; as an example we write} $\alpha = \left( \begin{smallmatrix} \alpha_{(1)} \\ \alpha_{(2)} \end{smallmatrix} \right)$ for $\alpha \in Y^*$.
\begin{lemma} \label{lem:c} The quasi-periodic capacitance matrix coefficients 
	$C_{11}^\alpha $ and $C_{12}^\alpha $ are differentiable with respect to $\alpha$ at $\alpha=\alpha^*$. Moreover,
	$$\nabla_\alpha C_{11}^\alpha \Big|_{\alpha=\alpha^*}=0,\quad \nabla_\alpha C_{12}^\alpha \Big|_{\alpha=\alpha^*} = c \begin{pmatrix}1\\ -\iu\end{pmatrix}, $$
	where $c := \pd{C_{12}^\alpha}{\alpha_{(1)}}\Big|_{\alpha=\alpha^*}$. Under Assumption \ref{asump:g3}, we have $ c \ne 0$.
\end{lemma}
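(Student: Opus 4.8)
The plan is to settle differentiability directly from the layer-potential expansions, and then to reduce each gradient statement to an elementary equivariance of $C^\alpha$ under the point symmetries that fix $\alpha^*$. For differentiability, expansion \eqnref{eq:S1} at $k=0$ shows that $\alpha\mapsto\mathcal{S}_D^{\alpha,0}$ is differentiable near $\alpha^*$ as a map into $\mathcal{B}(L^2(\partial D),H^1(\partial D))$; since $\mathcal{S}_D^{\alpha,0}$ is invertible for $\alpha\in Y_0^*$, inversion is differentiable on the set of invertible operators, so $\psi_j=(\mathcal{S}_D^{\alpha,0})^{-1}[\chi_{\partial D_j}]$ depends differentiably on $\alpha$, and hence so does $C_{ij}^\alpha=-\int_{\partial D_i}\psi_j\,d\sigma$ by \eqnref{eq:Cpsi}.

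For the gradients, I would first record how the rotation transformation acts away from the Dirac point. Just as in the computation $T_1V_1^{\alpha^*}=\tau V_1^{\alpha^*}$, one checks that $T_1^\alpha$ sends $\alpha$-quasiperiodic functions to $R^{-1}\alpha$-quasiperiodic ones (consistent with $R^{-1}\alpha^*=\alpha^*$), and that, because $R_1D_1=D_1$ and $R_2D_2=D_2$, the capacitance potentials satisfy $T_1^\alpha V_i^\alpha=u_i(\alpha)V_i^{R^{-1}\alpha}$ with $u_1(\alpha)=e^{-\iu\alpha\cdot l_1}$ and $u_2(\alpha)=e^{-2\iu\alpha\cdot l_1}$ (so $u_1(\alpha^*)=\tau$, $u_2(\alpha^*)=\tau^2$). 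Since $T_1^\alpha$ is built from rotations and constant phases it preserves the Dirichlet pairing on $Y\setminus D$, exactly as used in the previous lemma, so substituting into the energy form of the capacitance gives the equivariance
\[
C_{ij}^{R^{-1}\alpha}=u_i(\alpha)\,\overline{u_j(\alpha)}\,C_{ij}^\alpha .
\]
Taking $i=j=1$ gives $C_{11}^{R^{-1}\alpha}=C_{11}^\alpha$; differentiating at $\alpha^*$ (chain rule, with $(R^{-1})^T=R$) yields $R\,\nabla_\alpha C_{11}^{\alpha^*}=\nabla_\alpha C_{11}^{\alpha^*}$, and since the rotation $R$ by $-\tfrac{2\pi}{3}$ fixes no nonzero vector, $\nabla_\alpha C_{11}^{\alpha^*}=0$.

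Taking $i=1,j=2$ gives $u_1\overline{u_2}=e^{\iu\alpha\cdot l_1}$, hence $C_{12}^{R^{-1}\alpha}=e^{\iu\alpha\cdot l_1}C_{12}^\alpha$. Writing $g:=\nabla_\alpha C_{12}^{\alpha^*}\in\mathbb{C}^2$ and differentiating this identity at $\alpha^*$, the term coming from the derivative of the phase is multiplied by $C_{12}^{\alpha^*}=0$ and drops out, leaving $R\,g=\tau^2 g$. Thus $g$ is the eigenvector of the matrix of rotation by $-\tfrac{2\pi}{3}$ for the eigenvalue $\tau^2=e^{-2\pi\iu/3}$, which is $\left(\begin{smallmatrix}1\\-\iu\end{smallmatrix}\right)$; therefore $g=c\left(\begin{smallmatrix}1\\-\iu\end{smallmatrix}\right)$ with $c=\partial C_{12}^\alpha/\partial\alpha_{(1)}|_{\alpha^*}$, as claimed.

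The genuinely hard part is $c\neq0$ under Assumption \ref{asump:g3}, and here symmetry alone is insufficient: the symmetry relations are homogeneous and consistent with $c=0$, so a quantitative input is needed. I would first make $c$ explicit by differentiating $\psi_2=(\mathcal{S}_D^{\alpha,0})^{-1}[\chi_{\partial D_2}]$, obtaining $c=\int_{\partial D_1}(\mathcal{S}_D^{\alpha^*,0})^{-1}(\partial_{\alpha_{(1)}}\mathcal{S}_D^{\alpha^*,0})[\psi_2]\,d\sigma$, where the kernel of $\partial_{\alpha_{(1)}}\mathcal{S}_D^{\alpha^*,0}$ is read off from \eqnref{eq:exp}; pairing against $\psi_1=(\mathcal{S}_D^{\alpha^*,0})^{-1}[\chi_{\partial D_1}]$ rewrites this as the bilinear form $\langle\psi_1,(\partial_{\alpha_{(1)}}\mathcal{S}_D^{\alpha^*,0})\psi_2\rangle$. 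The reflection $R_3$ across $\Gamma_3$ provided by Assumption \ref{asump:g3} swaps $D_1\leftrightarrow D_2$ and fixes $\alpha^*$, and combined with the conjugation symmetry $T_2$ it forces $c$ to be real, ruling out an imaginary cancellation. The decisive and hardest step is then to show the resulting real quantity does not vanish; I expect this to follow by identifying $c$, up to a nonzero factor, with a strictly signed functional of the densities across $\Gamma_3$ (morally, the leading transverse mode that would vanish only if $\psi_1,\psi_2$ were symmetric across $\Gamma_3$, which they are not), and establishing this non-degeneracy is where the real analytic work of the lemma lies.
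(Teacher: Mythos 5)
Your treatment of differentiability and of the two gradient identities is correct and is essentially the paper's own route: the paper likewise obtains differentiability from the Neumann-series expansion of $(\Scal_D^{\alpha,0})^{-1}$ together with \eqnref{eq:Cpsi}, and it likewise derives $C_{11}^{R^2\alpha}=C_{11}^{\alpha}$ and $C_{12}^{R^2\alpha}=e^{\iu\alpha\cdot l_1}C_{12}^{\alpha}$ from the transformation behaviour of $V_1^\alpha,V_2^\alpha$ under $T_1^\alpha$ (note $R^2=R^{-1}$, so your equivariance is the same identity), then differentiates at $\alpha^*$ and reads off the eigenvector of $R$ for the eigenvalue $\tau^2$. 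Your eigenvalue bookkeeping ($e^{\iu\alpha^*\cdot l_1}=\tau^2$, eigenvector $(1,-\iu)^T$) checks out.

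The genuine gap is the claim $c\neq 0$, which you explicitly leave as an expectation ("I expect this to follow by identifying $c$ \dots with a strictly signed functional"); as you yourself note, this is the hard content of the lemma, and no proof is supplied. The paper's argument is a contradiction argument, not an identification of $c$ with a manifestly signed quantity. It introduces $\hat V_j:=\pd{V_j^\alpha}{\alpha_{(1)}}\big|_{\alpha=\alpha^*}$, which solves a Dirichlet problem with an inhomogeneous quasi-periodicity condition, and integrates by parts to express $c$ as the boundary integral $\iu\frac{\sqrt 3 a}{2}\int_{\Gamma_2}\big(\overline{\p_\nu V_1^{\alpha^*}}V_2^{\alpha^*}-\overline{V_1^{\alpha^*}}\p_\nu V_2^{\alpha^*}\big)\dx\sigma$. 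Assuming $c=0$ and combining with the quasi-periodicity relation \eqnref{eq:quasiint} and the $T_2$ symmetry, all the $\Gamma_1,\Gamma_2$ pairings of $V_1^{\alpha^*}$ and $V_2^{\alpha^*}$ vanish; together with $C_{12}^{\alpha^*}=0$ this forces the corresponding pairing over $\Gamma_3$ to vanish. Assumption \ref{asump:g3} ($V_1^{\alpha^*}=V_2^{\alpha^*}\circ R_3$) converts that into $\mathrm{Re}\int_{\Gamma_3}V_1^{\alpha^*}\overline{\p_{\nu_3}V_1^{\alpha^*}}\dx\sigma=0$, and the rotation symmetry $T_1V_1^{\alpha^*}=\tau V_1^{\alpha^*}$ transports this to $\Gamma_1$, yielding $\int_{Y_2\setminus D_2}|\nabla V_1^{\alpha^*}|^2\dx x=0$; since $V_1^{\alpha^*}=0$ on $\p D_2$ this gives $V_1^{\alpha^*}\equiv 0$, contradicting $V_1^{\alpha^*}=1$ on $\p D_1$. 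So the "quantitative input" is the positivity of the Dirichlet energy plus uniqueness for harmonic functions, reached through a specific chain of boundary identities that your proposal does not establish. In particular, your proposed representation $c=\langle\psi_1,(\p_{\alpha_{(1)}}\Scal_D^{\alpha^*,0})\psi_2\rangle$ and the reality claim, even if correct, do not by themselves preclude $c=0$, so as written the final assertion of the lemma remains unproved.
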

\begin{proof}
	For a small $\epsilon \in Y^*$, we have from \eqnref{eq:S1}
	\begin{equation*}
	\mathhl{\left(\Scal_D^{\alpha^*+\epsilon,0}\right)^{-1} = \left(\Scal_D^{\alpha^*,0}\right)^{-1} + \left(\Scal_D^{\alpha^*,0}\right)^{-1} \left(\Scal_{1}^0\cdot \epsilon\right)\left(\Scal_D^{\alpha^*,0}\right)^{-1} + O(|\epsilon|^2),}
	\end{equation*}
	where the error term is with respect to the operator norm. Therefore, from \eqnref{eq:Cpsi} it follows that the quasi-periodic capacitance coefficients are differentiable at $\alpha=\alpha^*$.
	
	We have the relations
	$$  V_1^{R^2\alpha}(x) =  \begin{cases}  V_1^\alpha(R_1x),\quad &x\in Y_1,\\
	e^{-\iu\alpha \cdot l_1} V_1^\alpha(R_2x),\quad &x\in Y_2, \end{cases}$$
	$$  V_2^{R^2\alpha}(x) =  \begin{cases} e^{\iu\alpha \cdot l_1} V_2^\alpha(R_1x),\quad &x\in Y_1,\\
	V_2^\alpha(R_2x),\quad &x\in Y_2, \end{cases}$$
	from which it follows that
	$$ C_{11}^\alpha= C_{11}^{R\alpha}= C_{11}^{R^2\alpha}, \quad C_{12}^{R^2\alpha} = e^{\iu\alpha \cdot l_1} C_{12}^\alpha.$$
	Differentiating these expressions, and applying $R\alpha^* = \alpha^*$, we arrive  at
	$$ \nabla_\alpha C_{11}^\alpha \Big|_{\alpha=\alpha^*}=0,\quad \nabla_\alpha C_{12}^\alpha \Big|_{\alpha=\alpha^*} = c \begin{pmatrix}1\\-\iu \end{pmatrix}, $$
	where $c = \pd{C_{12}^\alpha}{\alpha_{(1)}}\Big|_{\alpha=\alpha^*}$.  It only remains to show that $c \ne 0$ if $D$ is symmetric with respect to  $\Gamma_3$. Let $\hat V_j := \pd{V_j^\alpha}{\alpha_{(1)}}\big|_{\alpha=\alpha^*}$, which satisfies
\begin{equation*}
\begin{cases}
\Delta \hat V_j =0 \quad &\mbox{in}\quad Y\setminus \overline{D},\\
\hat V_j =0 &\mbox{on}\quad \p D,\\
\hat V_j( x + l) = e^{\iu\alpha\cdot l} \hat V_j(x) +\iu l_{(1)} e^{\iu\alpha\cdot l} V_j^\alpha (x), \quad  & l\in \Lambda,
\end{cases}
\end{equation*}
where $l=\left( \begin{smallmatrix} l_{(1)} \\ l_{(2)}\end{smallmatrix}\right)$. Then, using quasi-periodicity of $V_j^\alpha$ and $\hat V_j$, we have
\begin{align}
\pd{C_{12}^{\alpha}}{\alpha_{(1)}} \Big|_{\alpha=\alpha^*} &= \int_{Y\setminus D} \left(\overline{ \nabla V_1^{\alpha^*}} \cdot  \nabla \hat V_2 + \overline{\nabla  \hat V_1} \cdot  \nabla  V_2^{\alpha^*} \right) \dx x \nonumber \\
&= \int_{\p Y} \left(\overline{ \pd{V_1^{\alpha^*}}{\nu} }    \hat V_2+   \overline{ \hat V_1} \pd{V_2^{\alpha^*}}{\nu}   \right)\dx \sigma \nonumber \\
&=\iu\frac{\sqrt{3}a}{2}\int_{\Gamma_2} \left( \overline{ \pd{V_1^{\alpha^*}}{\nu}}   V_2^{\alpha^*} -\overline{ V_1^{\alpha^*}} \pd{V_2^{\alpha^*}}{\nu}    \right)\dx \sigma, \label{eq:pc}
\end{align}
where we have used the fact that $\frac{\sqrt{3}a}{2}$ is the $l_{(1)}$-component of the lattice vectors $l_1$ and $l_2$. Using \eqnref{eq:quasiint}, we find that
\begin{align}
\int_{\Gamma_1}  \overline{ \pd{V_1^{\alpha^*}}{\nu}}   V_2^{\alpha^*}\dx\sigma =- \int_{\Gamma_2} \overline{ \pd{V_1^{\alpha^*}}{\nu}}   V_2^{\alpha^*}\dx\sigma =- \int_{\Gamma_2} \overline{ \pd{(T_2V_2^{\alpha^*})}{\nu} }  T_2V_1^{\alpha^*} \dx \sigma
= - \int_{\Gamma_1} \overline{ V_1^{\alpha^*} }\pd{V_2^{\alpha^*}}{\nu}   \dx \sigma. \label{eq:V1V2}
\end{align}
Assume that $ \pd{C_{12}^\alpha}{\alpha_{(1)}}\Big|_{\alpha=\alpha^*} = 0$. Then, from \eqnref{eq:pc} and \eqnref{eq:V1V2}, we obtain that
\begin{equation}\label{gamma_zero}
\int_{\Gamma_j} \overline{ V_1^{\alpha^*}} \pd{V_2^{\alpha^*}}{\nu} \dx\sigma = \int_{\Gamma_j}  \overline{ \pd{V_1^{\alpha^*}}{\nu}}  V_2^{\alpha^*}\dx\sigma=0, \end{equation}
for $j=1,2$. Let $\nu_3 = (1,0)$. Then it follows from \eqnref{gamma_zero}  and $C_{12}^{\alpha^*}=0$ that
\begin{equation}\label{gamma3_eval}
\int_{\Gamma_3}  \left( \overline{\pd{V_1^{\alpha^*}}{\nu_3}} V_2^{\alpha^*}-\overline{V_1^{\alpha^*}}\pd{V_2^{\alpha^*}}{\nu_3}\right)\dx\sigma = \int_{Y\setminus D} \overline{\nabla V_1^{\alpha^*}} \cdot \nabla V_2^{\alpha^*} \dx x=0.
\end{equation}
If we write $\alpha^* = \left( \begin{smallmatrix} \alpha_{(1)}^* \\ \alpha_{(2)}^*\end{smallmatrix}\right)$, then $\left( \begin{smallmatrix} -\alpha_{(1)}^* \\ \alpha_{(2)}^*\end{smallmatrix}\right)=\alpha^*$ in $Y^*$. Under Assumption \ref{asump:g3}, \ie{} if $D$ is symmetric with respect to $\Gamma_3$, we observe that $V_1^{\alpha^*}(x) = V_2^{(-\alpha_{(1)}^*, \alpha_{(2)}^*)}(R_3x) = V_2^{\alpha^*}(R_3x)$. From this, we have that 
$$V_1^{\alpha^*} = V_2^{\alpha^*}, \quad \pd{V_2^{\alpha^*}}{\nu_3} = - \pd{V_1^{\alpha^*}}{\nu_3} \quad \text{on } \Gamma_3.$$
Together with \eqnref{gamma3_eval}, we get
\begin{equation}\label{gamma3_eval2}
2 \mbox{Re}\int_{\Gamma_3}    V_1^{\alpha^*}\overline{ \pd{V_1^{\alpha^*}}{\nu_3}}\dx\sigma=  \int_{\Gamma_3}  \left( \overline{\pd{V_1^{\alpha^*}}{\nu_3}} V_2^{\alpha^*} - \overline{V_1^{\alpha^*}}\pd{V_2^{\alpha^*}}{\nu_3}\right)\dx\sigma =0.
\end{equation}
We recall that $T_1 V_1^{\alpha^*} = \tau V_1^{\alpha^*}$ and observe that $\Gamma_1 = \big(R_1 \Gamma_3 \big)\cup \big((R_1)^2\Gamma_3\big)$. Using these facts and \eqnref{gamma3_eval2}, we get 
$$ \mbox{Re} \int_{\Gamma_1}  V_1^{\alpha^*}\overline{ \pd{V_1^{\alpha^*}}{\nu}}\dx\sigma=0,$$
so that $$ \int_{Y_2\setminus D_2} |\nabla V_1^{\alpha^*}|^2 \dx x=0.$$
Combined with $ V_1^{\alpha^*}\Big|_{\p D_2}=0$, this tells us that $V_1^{\alpha^*} =0$ in $Y\setminus D$, which is a contradiction. Therefore,  we can conclude that $ \pd{C_{12}^\alpha}{\alpha_{(1)}}\Big|_{\alpha=\alpha^*} = c \ne 0$. 
\end{proof}

Starting from the asymptotic formula for $\omega_j^\alpha$ in terms of $\delta$, we can now ascertain the \hl{asymptotic} dependence for $\alpha$ close to $\alpha^*$. From Lemma \ref{lem:c}, we obtain that
\begin{align*}
C_{11}^\alpha &= C_{11}^{\alpha^*} + O(|\alpha-\alpha^*|^2),\quad |C_{12}^\alpha|= \left|\pd{C_{12}^\alpha}{\alpha_{(1)}}\Big|_{\alpha=\alpha^*}\right| |\alpha-\alpha^*| + O(|\alpha-\alpha^*|^2).
\end{align*}
Because $C^\alpha$ is Hermitian with identical diagonal elements, the eigenvalues are given by $\lambda_j^\alpha = C_{11}^\alpha \pm |C_{12}^\alpha|$. For $\alpha$ close to $\alpha^*$,  we find the following asymptotic behaviour:
\begin{align*} 
\lambda_j^\alpha = C_{11}^{\alpha^*}\pm \left|\pd{C_{12}^\alpha}{\alpha_{(1)}}\Big|_{\alpha=\alpha^*}\right| |\alpha-\alpha^*| + O(|\alpha-\alpha^*|^2).
\end{align*}
Now, we conclude that $\lambda_1^{\alpha^*}=\lambda_2^{\alpha^*}=C_{11}^{\alpha^*}$ and
\begin{equation} \label{eq:asymptotic}
\omega_j^\alpha(\delta)= \sqrt{\frac{\delta C_{11}^{\alpha^*} }{|D|}}  v_b \left( 1\pm \frac{\left|\pd{C_{12}^\alpha}{\alpha_{(1)}}\Big|_{\alpha=\alpha^*}\right|}{2C_{11}^{\alpha^*}}|\alpha-\alpha^*|+O(|\alpha-\alpha^*|^2) \right)+ O(\delta).
\end{equation}
Equation \eqnref{eq:asymptotic} gives the asymptotic band structure for small $\delta$, and suggests that the system has a Dirac cone. However, we do not know the behaviour of the error term $O(\delta)$, so we can not conclude the existence of a Dirac cone from equation \eqnref{eq:asymptotic} alone. This will be addressed in the following section. 

\begin{remark} Theorem \ref{thm:delta} shows that \hl{$(\omega_j^\alpha)^2$} scales like $O(\delta)$ for small $\delta$. In \cite{H3a}, it was found the Minnaert resonant frequency $\omega_M$ of a single bubble in free space scales according to $\omega_M^2\ln \omega_M = O(\delta)$ in two dimensions. 
Thus, $\omega_j^\alpha$ has a different asymptotic behaviour than $\omega_M$. The difference is explained by the quasi-periodic single layer potential not exhibiting a $\log$-singularity as $\omega\rightarrow 0$.
\end{remark}

\section{Conical behaviour of subwavelength bands at Dirac points} \label{sec:dirac}
In this section, we prove the main result of the paper, Theorem \ref{main}. \hl{In contrast to the approximations in Section} \ref{sec:pre},\hl{ we here prove the existence of an \emph{exact} Dirac cone. However, unlike Section} \ref{sec:pre}, \hl{the method utilized here does not enable explicit computations of the slope and centre frequency of the cone.} As before, let $\omega^*$ be the Bloch resonant frequency at $\alpha^*=\frac{2\alpha_1+\alpha_2}{3}$.
\begin{theorem}\label{main}
	For sufficiently small $\delta$, the first and second band functions form a Dirac cone at $\alpha^*$, \ie{},
\begin{align} 
\omega_1^\alpha(\delta) = \omega^*- \lambda |\alpha - \alpha^*| \big[ 1+ O(|\alpha-\alpha^*|) \big],  \label{eq:cone1}\\
\omega_2^\alpha(\delta) = \omega^*+ \lambda |\alpha - \alpha^*| \big[ 1+ O(|\alpha-\alpha^*|) \big],  \label{eq:cone2}
\end{align}
for some $\lambda$ independent of $\alpha$, where the error term $O(|\alpha-\alpha^*|)$ is uniform in $\delta$. As $\delta\rightarrow 0$, we have the following asymptotic expansions of $\omega^*$ and $\lambda$:
$$ \omega^*= \sqrt{\frac{\delta C_{11}^{\alpha^*} }{|D|}}  v_b + O(\delta), \qquad \lambda=\frac{1}{2}\sqrt{\frac{\delta }{|D|C_{11}^{\alpha^*}}}  v_b \left|\pd{C_{12}^\alpha}{\alpha_{(1)}}\Big|_{\alpha=\alpha^*}\right| + O(\delta).$$
Under Assumption \ref{asump:g3}, $\lambda$ is non-zero for sufficiently small $\delta$. 
\end{theorem}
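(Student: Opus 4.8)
The plan is to reduce the characteristic-value equation $\mathcal{A}_\delta^{\alpha,\omega}(\phi,\psi)^\top = 0$ in a neighbourhood of $(\alpha^*,\omega^*)$ to a $2\times2$ determinant condition, and then to use the lattice symmetries $T_1$ and $T_2$ to force this determinant into the canonical Dirac form $p^2(\omega-\omega^*)^2 = |q|^2|\alpha-\alpha^*|^2 + \text{h.o.t.}$ Write $\beta := \alpha-\alpha^*$. By Proposition \ref{prop_mul}, $\mathcal{A}_\delta^{\alpha^*,\omega^*}$ is Fredholm of index zero with a two-dimensional kernel $K$, and hence a two-dimensional cokernel. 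First I would fix bases $\{v_1,v_2\}$ of $K$ and $\{w_1,w_2\}$ of the cokernel and carry out a Lyapunov--Schmidt (Schur complement) reduction, which identifies the characteristic values of $\mathcal{A}_\delta^{\alpha,\omega}$ near $\omega^*$ with the zeros of $\det\mathcal{M}(\alpha,\omega)=0$, where $\mathcal{M}$ is the $2\times2$ matrix obtained by pairing the restriction of $\mathcal{A}_\delta^{\alpha,\omega}$ to (a perturbation of) $K$ against $w_1,w_2$. Since $\mathcal{A}_\delta^{\alpha,\omega}$ is analytic in $\omega$ and, by \eqnref{eq:S1}--\eqnref{eq:K1}, differentiable in $\alpha$, the generalized Rouché theorem \cite[Theorem 1.15]{surv235} (used as in Lemma \ref{lem:first}) confirms that exactly two characteristic values lie near $\omega^*$ for $\alpha$ near $\alpha^*$.

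Next I would expand $\mathcal{M}$ to first order using the two independent expansions already available in the paper: in $\omega$ through $k,k_b$ and \eqnref{eq:SKexp}, and in $\beta$ through \eqnref{eq:S1}--\eqnref{eq:K1}. Since $\mathcal{M}(\alpha^*,\omega^*)=0$, this yields $\mathcal{M}(\alpha^*+\beta,\omega) = (\omega-\omega^*)\,P + B\!\cdot\!\beta + O\big((|\omega-\omega^*|+|\beta|)^2\big)$ for a constant matrix $P$ and a $\beta$-linear term $B\!\cdot\!\beta$.

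The decisive step is to pin down the structure of $P$ and $B$ by symmetry. As in the proof of Proposition \ref{prop_mul}, $T_1$ and $T_2$ induce an action on density pairs commuting with $\mathcal{A}_\delta^{\alpha^*,\omega}$ for every $\omega$; since $T_1^3=I$ and $T_1\ne I$ on $K$, the kernel splits into the $\tau$- and $\tau^2$-eigenspaces of $T_1$, each one-dimensional, and I would choose $v_1,v_2$ (and $w_1,w_2$) adapted to this splitting. Because $\partial_\omega\mathcal{A}_\delta^{\alpha^*,\omega}$ also commutes with $T_1$, the matrix $P$ is diagonal; and because conjugation by $T_1$ turns the $\beta$-derivative family into the $R\beta$-family (using $R\alpha^*=\alpha^*$), the same selection-rule argument as in Lemma \ref{lem:c} shows that the diagonal of $B\!\cdot\!\beta$ vanishes while its off-diagonal entries are proportional to $\beta_{(1)}-\iu\beta_{(2)}$ and its conjugate. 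The residual $T_2$ symmetry, which swaps the two eigenspaces and conjugates, then forces the two diagonal entries of $P$ to coincide. To leading order we therefore obtain
$$ \mathcal{M}(\alpha^*+\beta,\omega) = \begin{pmatrix} p\,(\omega-\omega^*) & q\,(\beta_{(1)}-\iu\beta_{(2)}) \\ \overline{q}\,(\beta_{(1)}+\iu\beta_{(2)}) & p\,(\omega-\omega^*) \end{pmatrix} + \text{h.o.t.}, $$
and since $|\beta_{(1)}-\iu\beta_{(2)}|^2=|\beta|^2$, the condition $\det\mathcal{M}=0$ becomes $p^2(\omega-\omega^*)^2 = |q|^2|\beta|^2+\text{h.o.t.}$, giving the two branches $\omega-\omega^* = \pm(|q|/|p|)\,|\beta|\,(1+O(|\beta|))$. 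This is precisely \eqnref{eq:cone1}--\eqnref{eq:cone2} with $\lambda=|q|/|p|$. The asymptotics of $\omega^*$ and $\lambda$ in $\delta$ I would then read off not from $p,q$ directly but by matching this exact conical relation against the explicit approximation \eqnref{eq:asymptotic}; the non-degeneracy $c\ne0$ of Lemma \ref{lem:c} under Assumption \ref{asump:g3} finally gives $\lambda\ne0$ for small $\delta$.

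I expect the main obstacle to be twofold. First, transferring the selection rules from the capacitance matrix (Lemma \ref{lem:c}) to the full operator $\mathcal{A}_\delta$ requires carefully checking how $T_1$-conjugation acts on the first-order operators built from $G_1^k$ in \eqnref{eq:S1}--\eqnref{eq:K1}, together with verifying that $P$ is invertible (\ie{} $p\ne0$) so that dividing to define $\lambda$ is legitimate. Second, and more delicate, is the claim that the $O(|\alpha-\alpha^*|)$ remainder is \emph{uniform in} $\delta$: since $\omega^*=O(\delta^{1/2})\to0$ and $\mathcal{A}_\delta^{\alpha,\omega}$ depends on $\omega$ only through $k^2,k_b^2=O(\delta)$, the coefficients $p,q$ themselves depend on $\delta$, so the Lyapunov--Schmidt reduction and the Rouché estimates must be performed with bounds uniform for $\alpha\in Y_0^*$ and all small $\delta$, ensuring the remainder does not degenerate as $\delta\to0$.
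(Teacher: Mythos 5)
Your overall strategy is sound and, at the level of the symmetry analysis, essentially reproduces the paper's: you use the same eigenbasis of $T_1$ on the two-dimensional kernel (Lemma \ref{ker_basis}), the same intertwining of the $\alpha$-derivative family with $R$ (Lemma \ref{rel_st}), and the same matching against \eqnref{eq:asymptotic} to extract the $\delta$-asymptotics of $\omega^*$ and $\lambda$. Where you genuinely diverge is the reduction mechanism: you propose a Lyapunov--Schmidt/Schur-complement reduction to a $2\times 2$ matrix $\mathcal{M}(\alpha,\omega)$ and read the cone off $\det\mathcal{M}=0$, whereas the paper never forms an effective matrix. Instead it applies the generalized argument principle on a contour $\partial V$ of radius $O(\sqrt{\delta})$ to compute the two power sums $\sum_j(\omega_j^\alpha-\omega^*)$ and $\sum_j(\omega_j^\alpha-\omega^*)^2$, reducing everything to $\mathrm{tr}\,L\Acal_{\delta,1}^{\omega^*}\!\cdot(\alpha-\alpha^*)$ and $\mathrm{tr}\,[L\Acal_{\delta,1}^{\omega^*}\!\cdot(\alpha-\alpha^*)]^2$, where $L$ is the residue of $(\Acal_\delta^{\alpha^*,\omega})^{-1}$ at the simple pole $\omega^*$. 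Your determinant form makes the Dirac structure more transparent; the trace form spares the paper from choosing cokernel functionals and from proving invertibility of your matrix $P$ (your ``$p\neq 0$'' issue), since only the residue operator $L$ and its explicit representation \eqnref{eq:T} enter. (A small imprecision on your side: $T_2$ is antilinear, so it makes the diagonal entries of $P$ complex conjugates rather than equal; this is harmless since only $p_1p_2=|p_1|^2$ enters the determinant.)

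The one genuine gap is the uniformity in $\delta$ of the $O(|\alpha-\alpha^*|)$ remainder, which you correctly flag as the delicate point but for which you offer no mechanism. This is not a technicality one can wave through: $\|(\Acal_\delta^{\alpha^*,\omega})^{-1}\|=O(\delta^{-1})$ on the relevant contour, so the naive estimate of the perturbation $(\Acal_\delta^{\alpha^*,\omega})^{-1}(\Acal_\delta^{\alpha^*,\omega}-\Acal_\delta^{\alpha,\omega})$ is $O(\delta^{-1}|\alpha-\alpha^*|)$, which would destroy the claimed uniformity (and, in your framework, could make the $O((|\omega-\omega^*|+|\beta|)^2)$ remainder of $\mathcal{M}$ blow up as $\delta\to 0$). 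The paper closes this with Lemma \ref{lem:err}: the singular part $L$ of the resolvent is spanned against cokernel elements of the form $\delta^{-1/2}(0,\chi_{\partial D_i})^{\top}+O(\sqrt{\delta})$, and Lemma \ref{lem:layer}(ii) annihilates the leading contribution of $\Acal_\delta^{\alpha,\omega}$ against these, yielding $\|L\Acal_\delta^{\alpha,\omega}\|=O(\sqrt{\delta})$ and hence a $\delta$-uniform $O(|\alpha-\alpha^*|)$ bound. To complete your argument you would need the analogous statement for the reduced resolvent (the inverse of the complementary block in your splitting), i.e.\ a proof that it stays bounded uniformly, or a cancellation of the same kind; without it, \eqnref{eq:cone1}--\eqnref{eq:cone2} as stated (with the error uniform in $\delta$) do not follow.
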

\hl{Unlike the method in Section} \ref{sec:pre}, \hl{where the operator $\mathcal{A}_\delta^{\alpha,k}$ was asymptotically expanded in terms of $\omega$ and $\delta$, the idea is now to expand this operator for $\alpha$ close to $\alpha^*$, while keeping the dependence on $\omega$ and $\delta$ exact.} In Section \ref{sec:lemmas} we prove some preliminary results, while the main proof of Theorem \ref{main} is given in Section \ref{sec:pf}.

\subsection{Preliminary lemmas} \label{sec:lemmas}
Here, we prove Lemmas \ref{lemma_t1t2}, \ref{ker_basis}, \ref{rel_st} and \ref{lem:err} needed for the proof of Theorem \ref{main}. In the following, we will interchangeably use $T_1, T_2$ and $T_1^\alpha$ as operators on $L^2(\partial D)$ and as operators on $\big(L^2(\partial D)\big)^2$ (the latter defined by applying the operator coordinate-wise).
\begin{lemma}\label{lemma_t1t2} For every $\omega, \alpha$ and $\delta$,
\begin{equation}\label{eq:lemma_t1}
T_1^\alpha \mathcal{A}_\delta^{R^2\alpha,\omega} = \mathcal{A}_\delta^{\alpha,\omega}  T_1^\alpha, 
\end{equation}
and
\begin{equation} \label{eq:lemma_t2}
T_2 \mathcal{A}_\delta^{\alpha,\omega} = \mathcal{A}_\delta^{\alpha,\omega}  T_2.
\end{equation}

\end{lemma}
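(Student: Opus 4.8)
The plan is to reduce both matrix identities to covariance relations for the two operators out of which $\mathcal{A}_\delta^{\alpha,\omega}$ is built, the single layer potential $\Scal_D^{\cdot,k}$ and the Neumann--Poincar\'e operator $(\Kcal_D^{-\cdot,k})^*$, for the two wavenumbers $k=\omega/v$ and $k_b=\omega/v_b$. Since $T_1^\alpha$ and $T_2$ act coordinate-wise on $(L^2(\p D))^2$ and commute with the scalar operator $I$, comparing the four blocks of the two sides of \eqref{eq:lemma_t1} and of \eqref{eq:lemma_t2} reduces them to the single-operator covariances
$$ T_1^\alpha\,\Scal_D^{\alpha,k} = \Scal_D^{R^2\alpha,k}\,T_1^\alpha, \qquad T_1^\alpha\,(\Kcal_D^{-\alpha,k})^* = (\Kcal_D^{-R^2\alpha,k})^*\,T_1^\alpha, $$
i.e.\ conjugation by $T_1^\alpha$ shifts the quasi-momentum by $R^2$, together with the corresponding relations $T_2\Scal_D^{\alpha,k}=\Scal_D^{\alpha,k}T_2$ and $T_2(\Kcal_D^{-\alpha,k})^*=(\Kcal_D^{-\alpha,k})^*T_2$, in which $T_2$ leaves the quasi-momentum fixed. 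Everything then comes down to covariance of the kernel $G^{\alpha,k}$ under the rigid motions $R_1,R_2$ and the point reflection $R_0$.

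For $T_1$, I would first read two properties off the Fourier series of $G^{\alpha,k}$: the rotational covariance $G^{\beta,k}(Rx)=G^{R^2\beta,k}(x)$ (using that $\Lambda^*$ is $R$-invariant and $R^{\mathsf T}=R^{-1}=R^2$), and the quasi-periodicity $G^{\beta,k}(x+m)=e^{\iu\beta\cdot m}G^{\beta,k}(x)$ for $m\in\Lambda$. I would then evaluate $T_1^\alpha\Scal_D^{\alpha,k}[\psi]$ at $x\in\p D_i$, split the integral over $\p D=\p D_1\cup\p D_2$, and substitute $y=R_1w$ on $\p D_1$ and $y=R_2w$ on $\p D_2$, which are isometries fixing the respective component. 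Since $R_i x-R_j w=R(x-w)+(i-j)l_1$, the rotational covariance turns $G^{\alpha,k}$ into $G^{R^2\alpha,k}$, while the residual translation $(i-j)l_1$ produces a phase $e^{(i-j)\iu\alpha\cdot l_1}$ by quasi-periodicity; multiplied by the defining prefactor $e^{-i\iu\alpha\cdot l_1}$ of $T_1^\alpha$ this collapses to $e^{-j\iu\alpha\cdot l_1}$, which is exactly the prefactor carried by $(T_1^\alpha\psi)$ on $\p D_j$, reproducing $\Scal_D^{R^2\alpha,k}[T_1^\alpha\psi](x)$. For the Neumann--Poincar\'e operator the same substitutions apply, with the extra observation that the outward normal rotates, $\nu_{R_1x}=R\nu_x$, which precisely cancels the rotation of $\nabla G$ produced by the chain rule.

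For $T_2$ I would use that $T_2$ is antilinear and that, for real $k$, $G^{\alpha,k}(-x)=\overline{G^{\alpha,k}(x)}$ (from the series after $q\mapsto -q$). Evaluating $T_2\Scal_D^{\alpha,k}[\psi]$ and substituting $y=R_0w$ (which swaps $\p D_1\leftrightarrow\p D_2$, since $R_0D_1=D_2$) gives $R_0x-R_0w=-(x-w)$, so the inner conjugation turns $G^{\alpha,k}(-(x-w))$ back into $G^{\alpha,k}(x-w)$ with no shift of $\alpha$, yielding $T_2\Scal_D^{\alpha,k}=\Scal_D^{\alpha,k}T_2$. For $(\Kcal_D^{-\alpha,k})^*$ the normal now flips, $\nu_{R_0x}=-\nu_x$, and $\nabla G$ picks up an extra sign under $z\mapsto -z$; these two signs cancel and the conjugation absorbs the last one, so $T_2$ commutes with $(\Kcal_D^{-\alpha,k})^*$ as well. (For complex $\omega$ the conjugation sends $\omega\mapsto\bar\omega$; since the Bloch frequencies of interest are real, this is harmless.)

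The main obstacle is the phase bookkeeping in the $T_1$ step. The map $R_1$ fixes $\p D_1$ but sends $\p D_2$ to the translated copy $\p D_2-l_1$ (and symmetrically $R_2$ fixes $\p D_2$ while moving $\p D_1$), so on each of the four source/target pairings the change of variables generates a quasi-periodic phase correction that must interlock exactly with the two distinct prefactors in the piecewise definition of $T_1^\alpha$; the cancellation $e^{-i\iu\alpha\cdot l_1}\,e^{(i-j)\iu\alpha\cdot l_1}=e^{-j\iu\alpha\cdot l_1}$ is precisely what enforces the quasi-momentum shift by $R^2$ and is the one place a sign or index slip would silently break the identity. Because all substitutions are isometries, the principal value and the $\tfrac12 I$ jump terms are preserved, so once the covariances are established the $2\times2$ blocks assemble immediately into the intertwining relations \eqref{eq:lemma_t1} and \eqref{eq:lemma_t2}; at the Dirac point, where $R^2\alpha^*=\alpha^*$, this specializes to the commutation $T_1\mathcal{A}_\delta^{\alpha^*,\omega}=\mathcal{A}_\delta^{\alpha^*,\omega}T_1$ used in the sequel.
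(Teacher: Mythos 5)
Your plan is essentially the paper's own proof. The paper likewise reduces \eqnref{eq:lemma_t1} and \eqnref{eq:lemma_t2} block-by-block to covariance relations for $\Scal_D^{\cdot,k}$ and $(\Kcal_D^{-\cdot,k})^*$, derives the rotational covariance $G^{R^2\alpha,k}(x-y)=G^{\alpha,k}(Rx-Ry)$ from the Fourier series of $G^{\alpha,k}$ using $R\Lambda^*=\Lambda^*$, performs exactly your substitutions $y=R_1w$ on $\p D_1$ and $y=R_2w$ on $\p D_2$ with the same quasi-periodic phase bookkeeping, and obtains \eqnref{eq:lemma_t2} from $\overline{G^{\alpha,k}(x-y)}=G^{\alpha,k}\big((2x_0-x)-(2x_0-y)\big)$. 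Your additional observations (rotation of the normal vector, preservation of the principal value and the $\tfrac12 I$ jump terms, antilinearity of $T_2$ and the reality caveat for complex $\omega$) are correct and are left implicit in the paper.

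One point must be pinned down before this becomes a proof. The single-operator covariance you state, $T_1^\alpha\,\Scal_D^{\alpha,k}=\Scal_D^{R^2\alpha,k}\,T_1^\alpha$, has $\alpha$ and $R^2\alpha$ in the opposite positions from what the block decomposition of \eqnref{eq:lemma_t1} requires, namely $T_1^\alpha\,\Scal_D^{R^2\alpha,k}=\Scal_D^{\alpha,k}\,T_1^\alpha$. Because $T_1^\alpha$ itself carries the $\alpha$-dependent prefactors $e^{-\iu\alpha\cdot l_1}$ and $e^{-2\iu\alpha\cdot l_1}$, these two identities are \emph{not} interchangeable by relabelling $\alpha$. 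Moreover, your phase cancellation $e^{-i\iu\alpha\cdot l_1}e^{(i-j)\iu\alpha\cdot l_1}=e^{-j\iu\alpha\cdot l_1}$ uses the quasi-periodicity phase $e^{\iu\beta\cdot m}$ of the kernel $G^{\beta,k}$ with $\beta=\alpha$; if instead the operator being transformed is $\Scal_D^{R^2\alpha,k}$, the residual translation $(i-j)l_1$ produces $e^{(i-j)\iu(R^2\alpha)\cdot l_1}$, which does not collapse against the prefactors of $T_1^\alpha$ in the same way. So your computation, as described, establishes the first form, not literally the statement; the two coincide at $\alpha=\alpha^*$ (where $R^2\alpha^*=\alpha^*$), but the lemma is also invoked for $\alpha$ near $\alpha^*$ in Lemma \ref{rel_st}. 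This is precisely the index slip you yourself identify as the danger point, and it needs to be resolved explicitly — either by redoing the substitution starting from the kernel $G^{R^2\alpha,k}$ and checking how the phases close up, or by restating the covariance in the form your bookkeeping actually delivers and propagating that form through the later arguments.
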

\pf 
To prove \eqnref{eq:lemma_t1}, recall that $R\Lambda^*=\Lambda^*$. Therefore, it follows that
$$
G^{R^2\alpha,k}(x-y) = \frac{1}{|Y|}\sum_{q\in \Lambda^*} \frac{e^{\iu(R^2\alpha+q)\cdot (x-y)}}{ k^2-|R^2\alpha+q|^2}=\frac{1}{|Y|}\sum_{q\in \Lambda^*} \frac{e^{\iu R(R^2\alpha+q)\cdot (Rx-Ry)}}{ k^2-|R\left(R^2\alpha+q\right)|^2} = G^{\alpha,k}(Rx-Ry).
$$
Then, we can check that
\begin{align*}
\Scal_{D_1}^{\alpha,k}[\psi(R_1y)](x)&= \int_{\p D_1} G^{\alpha,k}(x-y) \psi(R_1y) \dx\sigma(y)\\
&=\int_{\p D_1} G^{R^2\alpha,k}(R_1x-R_1y) \psi(R_1y) \dx\sigma(y) \\
&=\Scal_{D_1}^{R^2\alpha,k}[\psi(y)](R_1x)\\
&=\begin{cases} \Scal_{D_1}^{R^2\alpha,k}[\psi(y)](R_1x),\quad &x\in Y_1,\\
 e^{-\iu\alpha\cdot l_1}\Scal_{D_1}^{R^2\alpha,k}[\psi(y)](R_2x),\quad &x\in Y_2 .\end{cases}
\end{align*}
Similarly, we obtain that
\begin{align*}
\Scal_{D_2}^{\alpha,k}[\psi(R_2y)](x)&=\Scal_{D_2}^{R^2\alpha,k}[\psi(y)](R_2x)\\
&=\begin{cases} e^{\iu\alpha\cdot l_1}\Scal_{D_2}^{R^2\alpha,k}[\psi(y)](R_1x),\quad &x\in Y_1,\\
 \Scal_{D_2}^{R^2\alpha,k}[\psi(y)](R_2x),\quad &x\in Y_2 .\end{cases}
\end{align*}
Thus, we get
$$ \Scal_D^{\alpha,k}[T_1^\alpha\psi](x)=T_1^\alpha\Scal_D^{R^2\alpha,k}[\psi](x),\quad x\in Y.$$
This proves \eqnref{eq:lemma_t1}. To prove \eqnref{eq:lemma_t2}, we use the fact that
$$\overline{G^{\alpha,k}(x-y)} = G^{\alpha,k}\big( (2x_0-x) - (2x_0-y) \big).$$
Considering this together with the definitions of $\Scal_D^{\alpha,k}$ and $(\Kcal_D^{-\alpha,k})^*$, we find these operators commute with $T_2$, and hence $\Acal_{\delta}^{\alpha,\omega}$ commutes with $T_2$. This concludes the proof.
\qed

\begin{lemma} \label{ker_basis}
There are two elements $ \Phi_1$ and $\Phi_2$ in the kernel of  $\mathcal{A}_\delta^{\alpha^*,\omega^*}$ which satisfy
$$  T_1\Phi_1= \tau \Phi_1, \quad  T_1\Phi_2 =  \tau^2 \Phi_2, \quad  T_2 \Phi_1 = \Phi_2,$$
where $\tau = e^{2\pi \iu/3}$.
\end{lemma}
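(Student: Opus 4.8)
The plan is to exploit the symmetry operators $T_1$ and $T_2$, which by Lemma \ref{lemma_t1t2} commute with $\mathcal{A}_\delta^{\alpha^*,\omega^*}$ at the Dirac point. Indeed, since $R\alpha^* = \alpha^*$ we also have $R^2\alpha^* = \alpha^*$, so \eqnref{eq:lemma_t1} reduces to $T_1 \mathcal{A}_\delta^{\alpha^*,\omega} = \mathcal{A}_\delta^{\alpha^*,\omega} T_1$, while \eqnref{eq:lemma_t2} gives the corresponding relation for $T_2$. Consequently both $T_1$ and $T_2$ map the kernel $K := \ker \mathcal{A}_\delta^{\alpha^*,\omega^*}$ into itself; note that $\mathcal{A}_\delta^{\alpha^*,\omega^*}(T_2\Phi) = T_2(\mathcal{A}_\delta^{\alpha^*,\omega^*}\Phi) = 0$ holds despite $T_2$ being antilinear, since linearity of $\mathcal{A}_\delta^{\alpha^*,\omega^*}$ suffices. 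By Proposition \ref{prop_mul}, $K$ is two-dimensional.

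First I would analyse the action of $T_1$ on $K$. Since $T_1^3 = I$, the restriction $T_1|_K$ is diagonalizable with eigenvalues among $\{1, \tau, \tau^2\}$. I claim the eigenvalue $1$ cannot occur: if $T_1 \Phi = \Phi$ for some nonzero $\Phi = (\phi,\psi) \in K$, then the associated field $u$ defined through the representation \eqnref{eq:rep} satisfies $T_1 u = u$; but, exactly as in the proof of Proposition \ref{prop_mul}, $u$ is constant on each $D_i$ up to an error of order $O(\delta)$, and the nontrivial phase factors $e^{-\iu\alpha^*\cdot l_1} = \tau$ and $e^{-2\iu\alpha^*\cdot l_1} = \tau^2$ produced by $T_1$ on $D_1$ and $D_2$ force these constants to vanish, contradicting nontriviality. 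Hence $T_1|_K$ has eigenvalues only in $\{\tau, \tau^2\}$.

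Next I would bring in $T_2$. Because $T_2$ is antilinear with $\overline{\tau} = \tau^2$, and because $T_1 T_2 = T_2 T_1$, any $\tau$-eigenvector $\Phi$ satisfies $T_1(T_2\Phi) = T_2(T_1\Phi) = T_2(\tau\Phi) = \tau^2\, T_2\Phi$; that is, $T_2$ carries the $\tau$-eigenspace onto the $\tau^2$-eigenspace, and it does so bijectively since $T_2^2 = I$. In particular the two eigenspaces have equal dimension. Combined with the exclusion of the eigenvalue $1$ and $\dim K = 2$, this forces each of the $\tau$- and $\tau^2$-eigenspaces to be exactly one-dimensional. Finally I would choose $\Phi_1$ to be any nonzero element of the $\tau$-eigenspace and set $\Phi_2 := T_2\Phi_1$. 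Then $T_1\Phi_1 = \tau\Phi_1$ and $T_2\Phi_1 = \Phi_2$ hold by construction, $T_1\Phi_2 = \tau^2\Phi_2$ follows from the computation above, and $\Phi_1, \Phi_2$ are linearly independent since they lie in distinct eigenspaces.

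I expect the only genuinely delicate step to be the exclusion of the eigenvalue $1$, which rests on the approximate constancy of $u$ on the resonators with controlled $O(\delta)$ error — a fact imported from the analysis underlying Theorem \ref{thm:delta} and Proposition \ref{prop_mul}. The remainder is essentially linear-algebraic bookkeeping, where the one point requiring care is the antilinearity of $T_2$, which is precisely what makes $\tau$ and $\tau^2$ genuinely interchanged rather than preserved.
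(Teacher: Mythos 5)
Your proposal is correct and follows essentially the same route as the paper: use Lemma \ref{lemma_t1t2} (with $R^2\alpha^*=\alpha^*$) to make $T_1,T_2$ act on the two-dimensional kernel, rule out the eigenvalue $1$ of $T_1$ by the approximate-constancy argument from Proposition \ref{prop_mul}, and then use the commutation $T_1T_2=T_2T_1$ together with the antilinearity of $T_2$ to send a $\tau$-eigenvector to a $\tau^2$-eigenvector. Your extra remarks (diagonalizability of $T_1|_K$, equality of the eigenspace dimensions, and the explicit handling of antilinearity) are sound refinements of the same argument rather than a different method.
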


\pf Let $A$ be the kernel of $\mathcal{A}_\delta^{\alpha^*,\omega^*}$. By Lemma \ref{lemma_t1t2}, $ T_1$ and $T_2$ are operators from $A$ onto itself. Since $ T_1^3=I$ and $\ker ( T_1-I)$ is trivial in $A$ by the same argument as in the proof of Proposition \ref{prop_mul}, there is an element $\Phi\in A$ such that either $T_1\Phi = \tau \Phi$ or $ T_1\Phi = \tau^2 \Phi.$ In the first case we let $\Phi_1:=\Phi$ and $\Phi_2:= T_2 \Phi_1$. We then have 
$$T_1\Phi_2 = T_1( T_2\Phi_1)= T_2( T_1\Phi_1) = \tau^2 \Phi_2.$$ 
In the second case we let $\Phi_2 := \Phi$ and $\Phi_1 := T_2 \Phi$ and similarly find that $T_1\Phi_1 = \tau \Phi_1$. This proves the claim
\qed

Using the expansions \eqnref{eq:S1} and \eqnref{eq:K1}, we decompose $\Acal_\delta^{\alpha,\omega}$ into
\begin{align}
\Acal_\delta^{\alpha,\omega}&= \Acal_\delta^{\alpha^*,\omega}+ \begin{pmatrix} \mathcal{S}_1^{ k_b}& -\mathcal{S}_1^k \\ \mathcal{K}_1^{ k_b} & - \delta\mathcal{K}_1^k \end{pmatrix} \cdot (\alpha-\alpha^*) + O(|\alpha-\alpha^*|^2)\nonumber\\
&:= \Acal_\delta^{\alpha^*,\omega}+ \Acal_{\delta,1}^\omega \cdot (\alpha-\alpha^*)+ O(|\alpha-\alpha^*|^2),\label{A_exp}
\end{align}
uniformly for $\delta$ in a neighbourhood of $0$, with error term with respect to the operator norm. Here, $\cdot$ means the standard inner product taken component-wise. As shown in the proof of Lemma \ref{lem:first}, $\omega^*$ is a characteristic value of multiplicity 2, and from Proposition \ref{prop_mul} we know that $\ker\Acal_\delta^{\alpha^*,\omega^*}$ is two-dimensional. Consequently, $\omega^*$ is a pole of order 1 of $(\Acal_\delta^{\alpha^*,\omega})^{-1}$, and we can write
\begin{align} \label{eq:polepencil}
(\Acal_\delta^{\alpha^*,\omega})^{-1} = \frac{L}{\omega-\omega^*} 
+ E^\omega ,
\end{align}
where \hl{the operator $L$ is from $\big(L^2(\partial D)\big)^2$ onto $\ker\Acal_\delta^{\alpha^*,\omega^*}$} and $E^\omega$ is analytic in $\omega$.

Next, we investigate some properties of $L$. It is easy to check that $L$ vanishes on the range of $\Acal_\delta^{\alpha^*,\omega^*} $. By Lemma \ref{lemma_t1t2}, we have
\begin{equation*}
L  T_1 =  T_1 L,~~L T_2 =  T_2 L.
\end{equation*}
We also have the following result.
\begin{lemma}\label{rel_st}
For every $\alpha \in Y^*$ in a neighbourhood of $0$, it holds that
\begin{equation*}
L (\Acal_{\delta,1}^{\omega^*} \cdot \alpha) T_1\Phi =  T_1 L (\Acal_{\delta,1}^{\omega^*} \cdot  R^2\alpha)\Phi,
\end{equation*}
for every $\Phi$ in the kernel of $\Acal_\delta^{\alpha^*,\omega^*}$.
\end{lemma}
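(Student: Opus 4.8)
The plan is to obtain the stated identity as the first-order consequence, in the displacement $\alpha$, of the $T_1$-intertwining relation \eqnref{eq:lemma_t1} from Lemma \ref{lemma_t1t2}, evaluated at the pole $\omega=\omega^*$ and then projected by $L$. The point is that everything reduces to two structural facts that are already available: the commutation $L T_1 = T_1 L$, and the vanishing of $L$ on the range of $\Acal_\delta^{\alpha^*,\omega^*}$.

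First I would set $\beta = \alpha^* + \alpha$ for a small displacement $\alpha \in Y^*$ and write \eqnref{eq:lemma_t1} at quasi-periodicity $\beta$ and frequency $\omega^*$:
$$T_1^{\beta}\,\Acal_\delta^{R^2\beta,\omega^*} = \Acal_\delta^{\beta,\omega^*}\,T_1^{\beta}.$$
Since $R\alpha^* = \alpha^*$ and $R$ is linear, $R^2\alpha^* = \alpha^*$, so $R^2\beta = \alpha^* + R^2\alpha$. Using the expansion \eqnref{A_exp} we have $\Acal_\delta^{\beta,\omega^*} = \Acal_\delta^{\alpha^*,\omega^*} + \Acal_{\delta,1}^{\omega^*}\cdot\alpha + O(|\alpha|^2)$ and $\Acal_\delta^{R^2\beta,\omega^*} = \Acal_\delta^{\alpha^*,\omega^*} + \Acal_{\delta,1}^{\omega^*}\cdot R^2\alpha + O(|\alpha|^2)$, while the defining phases $e^{-\iu\beta\cdot l_1}$ of $T_1^\beta$ depend smoothly on $\beta$, so $T_1^{\beta} = T_1 + T_1'\cdot\alpha + O(|\alpha|^2)$ for some operator-valued directional derivative $T_1'$. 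Substituting and cancelling the $O(1)$ terms via the zeroth-order relation $T_1\Acal_\delta^{\alpha^*,\omega^*} = \Acal_\delta^{\alpha^*,\omega^*}T_1$ (which is \eqnref{eq:lemma_t1} at $\alpha^*$), the terms linear in $\alpha$ give
$$T_1\big(\Acal_{\delta,1}^{\omega^*}\cdot R^2\alpha\big) + (T_1'\cdot\alpha)\,\Acal_\delta^{\alpha^*,\omega^*} = \Acal_\delta^{\alpha^*,\omega^*}\,(T_1'\cdot\alpha) + \big(\Acal_{\delta,1}^{\omega^*}\cdot\alpha\big)\,T_1.$$

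Next I would apply this operator identity to an arbitrary $\Phi \in \ker\Acal_\delta^{\alpha^*,\omega^*}$ and then multiply on the left by $L$. Evaluation on $\Phi$ kills the second term on the left, since $(T_1'\cdot\alpha)\,\Acal_\delta^{\alpha^*,\omega^*}\Phi = 0$, and left-multiplication by $L$ kills the first term on the right, since $L$ vanishes on the range of $\Acal_\delta^{\alpha^*,\omega^*}$ (as noted above). What survives is
$$L\,T_1\big(\Acal_{\delta,1}^{\omega^*}\cdot R^2\alpha\big)\Phi = L\big(\Acal_{\delta,1}^{\omega^*}\cdot\alpha\big)T_1\Phi,$$
and since $LT_1 = T_1L$ this is precisely the asserted identity after transposing the two sides.

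The one subtlety — and the main obstacle — is the $\beta$-dependence of the intertwining operator $T_1^\beta$: unlike $T_2$, it carries the quasi-periodicity in its phase factors, so its derivative $T_1'$ appears in the linear expansion and cannot simply be ignored a priori. The decisive observation is that $T_1'$ enters only through the combinations $(T_1'\cdot\alpha)\,\Acal_\delta^{\alpha^*,\omega^*}$ and $\Acal_\delta^{\alpha^*,\omega^*}\,(T_1'\cdot\alpha)$, the former annihilated by evaluation on $\ker\Acal_\delta^{\alpha^*,\omega^*}$ and the latter by left-multiplication by $L$. Consequently no explicit formula for $T_1'$ is ever needed, and the argument closes using only the smoothness of $\beta\mapsto\Acal_\delta^{\beta,\omega^*}$ and $\beta\mapsto T_1^\beta$ together with the two facts $L\Acal_\delta^{\alpha^*,\omega^*}=0$ and $LT_1=T_1L$.
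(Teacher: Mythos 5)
Your proof is correct, and it reaches the identity by a genuinely more direct route than the paper. The paper first inverts the intertwining relation of Lemma \ref{lemma_t1t2} to get $(\mathcal{A}_\delta^{\alpha,\omega})^{-1}T_1^\alpha = T_1^\alpha(\mathcal{A}_\delta^{R^2\alpha,\omega})^{-1}$, expands both resolvents by a Neumann series together with the expansion of $T_1^\alpha$, collects the terms of order $O(|\alpha-\alpha^*|)$, multiplies by $\mathcal{A}_\delta^{\alpha^*,\omega}$ on the right, and finally extracts $L$ by integrating over a small contour around $\omega^*$ using the pole structure \eqnref{eq:polepencil}; the unwanted term $L\,\hat T_1\cdot(\alpha-\alpha^*)\,\mathcal{A}_\delta^{\alpha^*,\omega^*}$ is then killed by restricting to the kernel. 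You instead differentiate the \emph{forward} relation $T_1^\beta \Acal_\delta^{R^2\beta,\omega^*} = \Acal_\delta^{\beta,\omega^*} T_1^\beta$ at $\beta=\alpha^*$, and dispose of the two terms involving the derivative $T_1'$ of the phase factors by evaluating on $\ker\Acal_\delta^{\alpha^*,\omega^*}$ (for $(T_1'\cdot\alpha)\Acal_\delta^{\alpha^*,\omega^*}$) and by the identity $L\Acal_\delta^{\alpha^*,\omega^*}=0$ (for $\Acal_\delta^{\alpha^*,\omega^*}(T_1'\cdot\alpha)$), closing with $LT_1=T_1L$. Both arguments rest on the same inputs — the symmetry relation of Lemma \ref{lemma_t1t2}, the first-order expansion \eqnref{A_exp}, and the two stated properties of $L$ — but yours avoids the resolvent expansion and the residue calculus entirely, which makes it shorter and arguably cleaner; the paper's version has the minor advantage of producing along the way the explicit expansion of $(\mathcal{A}_\delta^{\alpha,\omega})^{-1}$ that is reused elsewhere in the proof of Theorem \ref{main}. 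Your identification of the linear-in-$\alpha$ terms is legitimate since the relation holds for all small $\alpha$ and a linear map that is $O(|\alpha|^2)$ vanishes.
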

\pf By Lemma \ref{lemma_t1t2}, we have
\begin{equation}
\label{comm_1}
(\mathcal{A}_\delta^{\alpha,\omega} )^{-1}T_1^\alpha  =  T_1^\alpha (\mathcal{A}_\delta^{R^2\alpha,\omega})^{-1}.
\end{equation}
\hl{Moreover, since $R^2\alpha^* = \alpha^*$, we get from Lemma} \ref{lemma_t1t2} that
\begin{equation} \label{eq:comm_2}
\mathhl{T_1 \mathcal{A}_\delta^{\alpha^*,\omega} = \mathcal{A}_\delta^{\alpha^*,\omega}  T_1.}
\end{equation}
Using \eqnref{A_exp} and the Neumann series, we get, for fixed $\omega, \delta$,
\begin{align}
(\mathcal{A}_\delta^{\alpha,\omega} )^{-1}&= (\mathcal{A}_\delta^{\alpha^*,\omega} )^{-1}
+(\mathcal{A}_\delta^{\alpha^*,\omega} )^{-1} \Acal_{\delta,1}^\omega \cdot (\alpha-\alpha^*) (\mathcal{A}_\delta^{\alpha^*,\omega} )^{-1} + O(|\alpha-\alpha^*|^2), \label{eq:Aexp}\\
(\mathcal{A}_\delta^{R^2\alpha,\omega} )^{-1}&= (\mathcal{A}_\delta^{\alpha^*,\omega} )^{-1}
+(\mathcal{A}_\delta^{\alpha^*,\omega} )^{-1} \Acal_{\delta,1}^\omega \cdot (R^2\alpha-R^2\alpha^*) (\mathcal{A}_\delta^{\alpha^*,\omega} )^{-1} + O(|\alpha-\alpha^*|^2), \label{eq:AR2exp}
\end{align}
where error terms are in the operator norm. \hl{We also expand $T_1^\alpha$ as}
\begin{equation}\label{eq:Texp}
\mathhl{T_1^\alpha=T_1 + \hat T_1\cdot(\alpha-\alpha^*)+ O(|\alpha-\alpha^*|^2),}\end{equation}
where the error is in the operator norm and $\hat T_1$ is given by
$$
(\hat T_1 f)(x):= \begin{cases} -\iu \tau l_1 f(R_1x), &x\in Y_1,\\
-2\iu \tau^2 l_1 f(R_2x), &x\in Y_2, \end{cases}
$$
\hl{Substituting} the asymptotic expansions \eqnref{eq:Aexp}, \eqnref{eq:AR2exp} and \eqnref{eq:Texp} into \eqnref{comm_1}, and collecting terms of order $O(|\alpha-\alpha^*|)$, we get
\begin{align*}
&(\mathcal{A}_\delta^{\alpha^*,\omega} )^{-1} \hat T_1\cdot(\alpha-\alpha^*) + (\mathcal{A}_\delta^{\alpha^*,\omega} )^{-1}  \Acal_{\delta,1}^\omega \cdot (\alpha-\alpha^*) (\mathcal{A}_\delta^{\alpha^*,\omega} )^{-1} T_1 \\
&\qquad = \hat T_1\cdot(\alpha-\alpha^*)(\mathcal{A}_\delta^{\alpha^*,\omega} )^{-1}  + T_1(\mathcal{A}_\delta^{\alpha^*,\omega} )^{-1} \Acal_{\delta,1}^\omega \cdot (R^2\alpha-R^2\alpha^*)  (\mathcal{A}_\delta^{\alpha^*,\omega} )^{-1}.
\end{align*}
Applying $\mathcal{A}_\delta^{\alpha^*,\omega}$ on the above identity \hl{from the right, and using} \eqnref{eq:comm_2}, we obtain
\begin{align*}
&(\mathcal{A}_\delta^{\alpha^*,\omega} )^{-1} \hat T_1\cdot(\alpha-\alpha^*) \mathcal{A}_\delta^{\alpha^*,\omega}+ (\mathcal{A}_\delta^{\alpha^*,\omega} )^{-1}  \Acal_{\delta,1}^\omega \cdot (\alpha-\alpha^*)  T_1  \\
&\qquad = \hat T_1\cdot(\alpha-\alpha^*) + T_1(\mathcal{A}_\delta^{\alpha^*,\omega} )^{-1}  \Acal_{\delta,1}^\omega \cdot (R^2\alpha-R^2\alpha^*).
\end{align*}
Integrating over a small contour around $\omega^*$, we have
$$
L \hat T_1\cdot(\alpha-\alpha^*) \mathcal{A}_\delta^{\alpha^*,\omega^*}+ L \Acal_{\delta,1}^{\omega^*} \cdot (\alpha-\alpha^*)  T_1 
=  T_1L  \Acal_{\delta,1}^{\omega^*} \cdot (R^2\alpha-R^2\alpha^*) .
$$
On the kernel of $\Acal_\delta^{\alpha^*,\omega^*}$, the first term vanishes. Therefore, we get
$$
L \Acal_{\delta,1}^{\omega^*} \cdot (\alpha-\alpha^*)  T_1 
=  T_1L  \Acal_{\delta,1}^{\omega^*} \cdot (R^2\alpha-R^2\alpha^*),
$$
on the kernel of $\Acal_\delta^{\alpha^*,\omega^*}$. This completes the proof. 
\qed

\begin{lemma} \label{lem:err}
	For $\omega = O(\sqrt{\delta})$ with $\omega-\omega^* = \mu \sqrt{\delta}$ for a fixed $\mu \neq 0$, we have
	$$\left\|(\Acal_\delta^{\alpha^*,\omega})^{-1} (\Acal_\delta^{\alpha^*,\omega}-\Acal_\delta^{\alpha,\omega})\right\|_{\B\big((L^2(\p D))^2,(L^2(\p D))^2\big)} = O(|\alpha-\alpha^*|),$$
	uniformly for $\delta$ in a neighbourhood of $0$.
\end{lemma}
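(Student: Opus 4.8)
The difficulty to overcome is a genuine mismatch of scales: from the decomposition \eqnref{A_exp} one has $\Acal_\delta^{\alpha^*,\omega}-\Acal_\delta^{\alpha,\omega}=O(|\alpha-\alpha^*|)$ in operator norm, whereas at $\omega=\omega^*+\mu\sqrt\delta$ the resolvent $(\Acal_\delta^{\alpha^*,\omega})^{-1}$ has norm of order $\delta^{-1/2}$, since $\omega^*$ is a characteristic value and $\omega$ lies only an $O(\sqrt\delta)$-distance away. The naive product bound is therefore $O(|\alpha-\alpha^*|\,\delta^{-1/2})$, which is not uniform in $\delta$. The plan is to recover the missing factor $\sqrt\delta$ by combining the precise pole structure of the resolvent with the block structure of the difference operator.

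First I would exploit that $G^{\alpha,k}$, and hence every block of $\Acal_\delta^{\alpha^*,\omega}$, depends on $\omega$ only through $\omega^2$; consequently $(\Acal_\delta^{\alpha^*,\omega})^{-1}$ is even in $\omega$, and by Proposition \ref{prop_mul} its only poles in the disk $V$ of Lemma \ref{lem:first} are the simple poles $\pm\omega^*$, each with two-dimensional kernel. Writing $L$ for the residue at $\omega^*$ as in \eqnref{eq:polepencil}, evenness forces the residue at $-\omega^*$ to be $-L$, so that
\begin{equation*}
(\Acal_\delta^{\alpha^*,\omega})^{-1} = \frac{2\omega^* L}{\omega^2-(\omega^*)^2} + F^\omega ,
\end{equation*}
with $F^\omega$ holomorphic in $V$. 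The crucial gain is that the combined principal part is $O(\omega^*\|L\|)$ on $\partial V$; since $\|L\|=O(\delta^{-1/2})$ (as computed below) and $\omega^*=O(\sqrt\delta)$, this is $O(1)$, and by the maximum principle $\|F^\omega\|_{\B((L^2(\p D))^2,(L^2(\p D))^2)}=O(1)$ uniformly in $\delta$. This is where the symmetry $\omega\mapsto-\omega$ is indispensable: subtracting only the pole at $\omega^*$ would leave a remainder of size $\delta^{-1/2}$.

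Next I would analyse $L(\Acal_\delta^{\alpha^*,\omega}-\Acal_\delta^{\alpha,\omega})$. The operator $L$ has rank two, mapping onto $\ker\Acal_\delta^{\alpha^*,\omega^*}$, and may be written $L=\sum_{i,j}m_{ij}\Phi_i\langle\,\cdot\,,\Xi_j\rangle$, where $\Phi_i$ span the kernel and $\Xi_j$ span the cokernel. A perturbation count gives $\Acal_\delta^{\alpha^*,\omega^*}-\Acal_0^{\alpha^*,0}=O(\delta)$, so the cokernel vectors are $\Xi_j=\left(\begin{smallmatrix}0\\ \chi_{\p D_j}\end{smallmatrix}\right)+O(\delta)$, while $(m_{ij})$ inverts the overlap matrix $\langle \p_\omega\Acal_\delta^{\alpha^*,\omega}|_{\omega^*}\Phi_j,\Xi_i\rangle$, which is of order $\sqrt\delta$ and nonzero by \eqnref{eq:K1int}; hence $\|L\|=O(\delta^{-1/2})$. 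The key computation is the pairing of the difference operator against the cokernel: only the second component survives, and using the exact identity $\int_{\p D_j}\big(-\tfrac12 I+(\Kcal_D^{-\beta,k})^*\big)[\phi]\dx\sigma=-k^2\int_{D_j}\Scal_D^{\beta,k}[\phi]\dx x$ established in the proof of Lemma \ref{lem:layer} with $\beta=\alpha^*$ and $\beta=\alpha$, the two $\tfrac12 I$ contributions cancel and one is left with a factor $k_b^2=O(\delta)$ multiplying $\Scal_D^{\alpha^*,k_b}-\Scal_D^{\alpha,k_b}=O(|\alpha-\alpha^*|)$. This yields $\langle(\Acal_\delta^{\alpha^*,\omega}-\Acal_\delta^{\alpha,\omega})g,\Xi_j\rangle=O(\delta\,|\alpha-\alpha^*|\,\|g\|)$, and combined with $\|L\|=O(\delta^{-1/2})$ gives $\|L(\Acal_\delta^{\alpha^*,\omega}-\Acal_\delta^{\alpha,\omega})\|=O(\sqrt\delta\,|\alpha-\alpha^*|)$.

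Finally I would assemble the estimate. For fixed $\mu\neq0$ (with $\mu$ away from the value forcing $\omega=-\omega^*$) both $\omega\mp\omega^*$ are of order $\sqrt\delta$, so $|\omega^2-(\omega^*)^2|\ge c\delta$ and
\begin{equation*}
\left\|\frac{2\omega^* L(\Acal_\delta^{\alpha^*,\omega}-\Acal_\delta^{\alpha,\omega})}{\omega^2-(\omega^*)^2}\right\| \le \frac{O(\sqrt\delta)\,O(\sqrt\delta\,|\alpha-\alpha^*|)}{c\delta} = O(|\alpha-\alpha^*|),
\end{equation*}
while the analytic part gives $\|F^\omega(\Acal_\delta^{\alpha^*,\omega}-\Acal_\delta^{\alpha,\omega})\|\le\|F^\omega\|\,O(|\alpha-\alpha^*|)=O(|\alpha-\alpha^*|)$; summing yields the claim uniformly in $\delta$. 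I expect the main obstacle to be the two intertwined cancellations — the uniform boundedness of $F^\omega$, which rests on the evenness of the problem in $\omega$ together with $\omega^*\|L\|=O(1)$, and the gain of a full power of $\delta$ in the cokernel pairing, which rests on the resonant directions living in the second component and on the divergence-theorem identity behind \eqnref{eq:K1int}. Keeping every estimate uniform in $\delta$, rather than for fixed $\delta$, is the part that demands the most care.
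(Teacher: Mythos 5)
Your argument is correct and rests on the same two pillars as the paper's proof: the rank-two residue $L$ whose cokernel functionals have size $O(\delta^{-1/2})$ and are concentrated in the second component, and the divergence-theorem identity behind Lemma \ref{lem:layer} (ii)--(iii), which makes the pairing of $\Acal_\delta^{\alpha^*,\omega}-\Acal_\delta^{\alpha,\omega}$ against $\left(\begin{smallmatrix} 0 \\ \chi_{\p D_j}\end{smallmatrix}\right)$ gain a full factor of $\delta$ (the identity parts cancelling because they are $\alpha$-independent); this is exactly the paper's estimate \eqnref{eq:TA} adapted to the difference operator, and both routes arrive at $\|L(\Acal_\delta^{\alpha^*,\omega}-\Acal_\delta^{\alpha,\omega})\| = O(\sqrt{\delta}\,|\alpha-\alpha^*|)$. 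Where you genuinely depart from the paper is in the treatment of the regular part of the resolvent: the paper uses the single-pole decomposition \eqnref{eq:polepencil} and leaves the control of $E^\omega$ implicit, even though $E^\omega$ still carries the contribution $-L/(\omega+\omega^*)$, of size $O(\delta^{-1})$ near $\omega^*$, so that bounding $E^\omega(\Acal_\delta^{\alpha^*,\omega}-\Acal_\delta^{\alpha,\omega})$ by $\|E^\omega\|\,O(|\alpha-\alpha^*|)$ alone would not suffice; you instead exploit the evenness of $\Acal_\delta^{\alpha^*,\omega}$ in $\omega$ to subtract both poles $\pm\omega^*$ simultaneously, obtaining a remainder $F^\omega$ that is genuinely $O(1)$ uniformly in $\delta$ via a maximum-principle argument on a fixed contour. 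This makes the uniformity in $\delta$ more transparent, at the modest cost of the extra (harmless in the application, where $\omega$ stays near the positive value $\omega^*$) caveat that $\mu$ must also avoid the value forcing $\omega=-\omega^*$; the paper's version reaches the same conclusion but only after one notices that the dangerous part of $E^\omega$ is again a multiple of $L$, to which the refined estimate applies.
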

\pf We first observe, from \eqnref{eq:sing}, that
\begin{equation} \label{eq:Asing}
\left\|(\Acal_\delta^{\alpha^*,\omega})^{-1}\right\| = O(\delta^{-1}),
\end{equation}
where we use the shorthand $\| \cdot \|$ for the norm in $\B\big(\big(L^2(\p D)\big)^2,\big(L^2(\p D)\big)^2\big)$. We know that $L$ vanishes on the range of $\Acal_\delta^{\alpha^*,\omega^*}$ and similarly,  the adjoint $L^*$ vanishes on the range of $\left(\Acal_\delta^{\alpha^*,\omega^*} \right)^*$. Therefore, $L$ can be written as 
\begin{equation} \label{eq:T}
L = \langle \mathcal{X}_1, \cdot \rangle \Phi_1 + \langle \mathcal{X}_2, \cdot \rangle \Phi_2\end{equation}
for some $\mathcal{X}_1, \mathcal{X}_2 \in \ker\left(\Acal_\delta^{\alpha^*,\omega^*} \right)^*$. Here, $\langle \cdot, \cdot \rangle$ denotes the standard inner product in $\big(L^2(\p D)\big)^2$. Using Lemma \ref{lem:first} (i), it is straightforward to check that a basis for $\ker\left(\Acal_\delta^{\alpha^*,\omega^*} \right)^*$ is given by
$$\left\{\begin{pmatrix} 0 \\ \chi_{\p D_1} \end{pmatrix} + O(\delta), \ \begin{pmatrix} 0 \\ \chi_{\p D_2} \end{pmatrix}+ O(\delta)\right\},$$
where the error is with respect to the norm in $\big(L^2(\p D)\big)^2$. Therefore, from \eqnref{eq:Asing} and \eqnref{eq:polepencil} we have
\begin{equation}\label{eq:Chi}
\mathcal{X}_i = \frac{1}{\sqrt{\delta}}\begin{pmatrix} 0 \\ \chi_i \end{pmatrix} + O(\sqrt{\delta}), \ i = 1,2,
\end{equation}
where $\chi_i(x)$ is a function that is constant for $x \in \p D_j, \ j=1,2$ and satisfies $\|\chi_i\|_{L^2(\p D)} = O(1)$. Combining Lemma \ref{lem:layer} (ii) with expansion \eqnref{eq:SKexp}, we find using \eqnref{eq:T} and \eqnref{eq:Chi} that 
\begin{equation}\label{eq:TA}
\|L\Acal_{\delta}^{\alpha,\omega}\| = O(\sqrt{\delta}),
\end{equation}
uniformly for $\alpha \in Y^*_0$. Since $\left\|\Acal_\delta^{\alpha^*,\omega}-\Acal_\delta^{\alpha,\omega}\right\| = O(|\alpha-\alpha^*|) $ uniformly for $\delta$ in a neighbourhood of $0$, we obtain that 
\begin{align*}
\left\|(\Acal_\delta^{\alpha^*,\omega})^{-1} (\Acal_\delta^{\alpha^*,\omega}-\Acal_\delta^{\alpha,\omega})\right\| &= \frac{1}{\mu \sqrt{\delta}} \left\|L (\Acal_\delta^{\alpha^*,\omega}-\Acal_\delta^{\alpha,\omega})\right\| + O(|\alpha-\alpha^*|) \\
&= O(|\alpha-\alpha^*|),
\end{align*}
uniformly for $\delta$ in a neighbourhood of $0$. This concludes the proof.
\qed

\subsection{Proof of Theorem \ref{main}} \label{sec:pf}
Now, we are ready to prove our main result in this paper, namely Theorem \ref{main}. We first observe that we only need to prove \eqnref{eq:cone1} and \eqnref{eq:cone2}, the remaining statements then follow from \eqnref{eq:asymptotic} and Lemma \ref{lem:c}. Throughout the proof, we assume that $\delta$ is sufficiently small so that Proposition \ref{prop_mul} holds.

Let $V\subset \mathbb{C}$ be a neighbourhood of $\omega^*$ containing only the two characteristic values $\omega_j^\alpha(\delta)$ for $j=1,2$, and satisfying $|\p V| = O(\sqrt{\delta})$. Then the generalized argument principle \cite[Theorem 1.14]{surv235} tells us that the characteristic values $\omega_1^\alpha(\delta)$ and $\omega_2^\alpha(\delta)$  of $\Acal_\delta^{\alpha,\omega}$ near $\alpha^*$ satisfy
$$ f(\omega_1^\alpha(\delta))+f(\omega_2^\alpha(\delta))=\frac{\mbox{tr}}{2\pi \iu}\int_{\p V}  (\Acal_\delta^{\alpha,\omega})^{-1} \frac{\mathrm{d}}{\dx\omega}\Acal_\delta^{\alpha,\omega} f(\omega)\dx\omega, $$
for any analytic function $f(\omega)$. As in the proof of \cite[Theorem 3.9]{akl}, we have
\begin{align*}
f(\omega_1^\alpha(\delta))+f(\omega_2^\alpha(\delta))=-\frac{\mbox{tr}}{2\pi \iu}\sum_{p=1}^\infty \int_{\p V} \frac{f(\omega)}{p} \frac{\mathrm{d}}{\dx\omega}\left[ (\Acal_\delta^{\alpha^*,\omega})^{-1} (\Acal_\delta^{\alpha^*,\omega}-\Acal_\delta^{\alpha,\omega})   \right]^p \dx\omega.
\end{align*}
\hl{Integrating by parts, we find}
\begin{align}\label{eq:gap}
\mathhl{f(\omega_1^\alpha(\delta))+f(\omega_2^\alpha(\delta))=\frac{\mbox{tr}}{2\pi \iu}\sum_{p=1}^\infty \int_{\p V} \frac{\mathrm{d} f(\omega)}{\dx\omega} \frac{1}{p} \left[ (\Acal_\delta^{\alpha^*,\omega})^{-1} (\Acal_\delta^{\alpha^*,\omega}-\Acal_\delta^{\alpha,\omega})   \right]^p \dx\omega.}
\end{align}
Using \eqnref{eq:gap} twice, with $f(\omega) = \omega-\omega^*$ and $f(\omega) =\left(\omega-\omega^*\right)^2$, and using Lemma \ref{lem:err} we have
\begin{align}
\omega_1^\alpha(\delta)- \omega^* + \omega_2^\alpha(\delta)-\omega^*&= \frac{\mbox{tr}}{2\pi \iu}\int_{\p V}  (\Acal_\delta^{\alpha^*,\omega})^{-1} \Acal_{\delta,1}^{\omega}\cdot (\alpha-\alpha^*)\dx\omega + O(\sqrt{\delta}|\alpha-\alpha^*|^2), \label{eq:wlin} \\
(\omega_1^\alpha(\delta)- \omega^*)^2 + (\omega_2^\alpha(\delta)-\omega^*)^2&= \frac{\mbox{tr}}{2\pi \iu}\int_{\p V} (\omega-\omega^*)\left[ (\Acal_\delta^{\alpha^*,\omega})^{-1} \Acal_{\delta,1}^{\omega}\cdot (\alpha-\alpha^*)\right]^2 \dx\omega \nonumber \\
&\qquad + O(\delta|\alpha-\alpha^*|^3), \label{eq:wsquare}
\end{align}
where we have used Cauchy's theorem to conclude that the term corresponding to $p=1$ vanishes in \eqnref{eq:wsquare}. Here, the error terms hold uniformly for $\delta$ in a neighbourhood of $0$ and $\alpha \in Y^*_0$. To finish the proof, it suffices to show that
\begin{align}
\frac{\mbox{tr}}{2\pi \iu}\int_{\p V}  (\Acal_\delta^{\alpha^*,\omega})^{-1} \Acal_{\delta,1}^{\omega}\cdot (\alpha-\alpha^*)\dx\omega&=0, \label{eq:firsttoprove}\\
 \frac{\mbox{tr}}{2\pi \iu}\int_{\p V} (\omega-\omega^*)\left[ (\Acal_\delta^{\alpha^*,\omega})^{-1} \Acal_{\delta,1}^{\omega}\cdot (\alpha-\alpha^*)\right]^2 \dx\omega&= C|\alpha-\alpha^*|^2, \label{eq:secondtoprove}
\end{align}
for some $C$ which is constant in $\alpha$ and scales as $O(\delta)$ as $\delta \rightarrow 0$. Indeed, this together with \eqnref{eq:wlin} and \eqnref{eq:wsquare} would imply that
$$\omega_j^\alpha(\delta) = \omega^* \pm \lambda|\alpha-\alpha^*|[1+O(|\alpha-\alpha^*|)],$$
uniformly for $\delta$ in a neighbourhood of $0$, where $2\lambda^2 = C$. The expression for $\lambda$ then follows from \eqnref{eq:asymptotic}. We see that 
\begin{align}\label{eq:first}
\frac{1}{2\pi \iu}\int_{\p V}  (\Acal_\delta^{\alpha^*,\omega})^{-1} \Acal_{\delta,1}^{\omega}\cdot (\alpha-\alpha^*)\dx\omega = L\Acal_{\delta,1}^{\omega^*}\cdot(\alpha-\alpha^*), 
\end{align}
is an operator \hl{that maps the kernel of $\Acal_\delta^{\alpha^*,\omega^*}$ onto itself}. Similarly, we get
\begin{align}\label{eq:second}
\frac{1}{2\pi \iu}\int_{\p V} (\omega-\omega^*)\left[ (\Acal_\delta^{\alpha^*,\omega})^{-1} \Acal_{\delta,1}^{\omega}\cdot (\alpha-\alpha^*)\right]^2 \dx\omega =[L\Acal_{\delta,1}^{\omega^*}\cdot(\alpha-\alpha^*)]^2.
\end{align}
For $\widetilde\alpha = \left( \begin{smallmatrix} \widetilde\alpha_{(1)} \\ \widetilde\alpha_{(2)} \end{smallmatrix} \right)$ in a neighbourhood of $0$, let
\begin{equation*}
\Acal_{\delta,1}^{\omega^*}\cdot \widetilde\alpha= \Acal_{\delta,11}^{\omega^*}\widetilde\alpha_{(1)} + \Acal_{\delta,12}^{\omega^*} \widetilde\alpha_{(2)}.
\end{equation*}
Suppose that
\begin{align*}
L\Acal_{\delta,11}^\omega[\Phi_1] = a \Phi_1 + b \Phi_2,\\
L\Acal_{\delta,12}^\omega[\Phi_1] = c \Phi_1 + d \Phi_2.
\end{align*}
From \eqnref{eq:TA} we know that $a,b,c$ and $d$ scale as $O(\sqrt{\delta})$ as $\delta \rightarrow 0$. Since
$$R^2= \frac{1}{2} \begin{pmatrix}-1 &  - \sqrt{3} \\ \sqrt{3} & -1 \end{pmatrix} $$
and $ T_1 \Phi_1 = \tau \Phi_1$, we obtain
\begin{align*}
L(\Acal_{\delta,1}^{\omega^*} \cdot\widetilde \alpha) T_1[\Phi_1]&=\tau \big(\widetilde \alpha_{(1)}(a \Phi_1 + b \Phi_2)+\widetilde\alpha_{(2)}(c \Phi_1 + d \Phi_2) \big) \\
 T_1 L(\Acal_{\delta,1}^{\omega^*} \cdot  R^2\widetilde\alpha)[\Phi_1]&=  T_1 \left[ \frac{-\widetilde\alpha_{(1)} - \sqrt{3}\widetilde\alpha_{(2)}}{2}(a \Phi_1 + b \Phi_2)+\frac{\sqrt{3}\widetilde\alpha_{(1)}-\widetilde\alpha_{(2)}}{2} (c \Phi_1 + d \Phi_2) \right] \\
&=\frac{-\widetilde\alpha_{(1)} - \sqrt{3}\widetilde\alpha_{(2)}}{2}(a \tau\Phi_1 + b\tau^2 \Phi_2)+\frac{\sqrt{3}\widetilde\alpha_{(1)}-\widetilde\alpha_{(2)}}{2} (c \tau\Phi_1 + d \tau^2\Phi_2).
\end{align*}
By Lemma \ref{rel_st},  it follows that
\begin{align*}
2a= -a+\sqrt{3}c,~ 2b=-\tau b+\sqrt{3}\tau d,~2c=-\sqrt{3}a -c,~2d=-\sqrt{3}\tau b - \tau d.
\end{align*}
Solving these equations, we obtain $a=c=0,~ d= -\frac{\sqrt{3} \tau}{2+\tau} b=-\iu b$, which means
\begin{equation*}
L\Acal_{\delta,11}^{\omega^*}[\Phi_1] = b \Phi_2,\quad L\Acal_{\delta,12}^{\omega^*}[\Phi_1] = - \iu b \Phi_2.
\end{equation*}
Similarly, putting 
\begin{align*}
L\Acal_{\delta,11}^\omega[\Phi_2] = \tilde a \Phi_1 + \tilde b \Phi_2,\\
L\Acal_{\delta,12}^\omega[\Phi_2] = \tilde c \Phi_1 + \tilde d \Phi_2.
\end{align*}
we carry out the same steps to conclude that
\begin{equation*}
L\Acal_{\delta,11}^{\omega^*}[\Phi_2] = \tilde  a \Phi_1, \quad L\Acal_{\delta,12}^{\omega^*}[\Phi_2] =  \iu\tilde a \Phi_1,
\end{equation*}
where $\tilde a = O(\sqrt{\delta})$. Now, we arrive at 
\begin{align*}
L \Acal_{\delta,1}^{\omega^*} \cdot (\alpha-\alpha^*) [\Phi_1] =  b\left((\alpha_{(1)}-\alpha_{(1)}^*)-\iu(\alpha_{(2)} -\alpha_{(2)}^*) \right)\Phi_2,\\
L \Acal_{\delta,1}^{\omega^*} \cdot (\alpha-\alpha^*) [\Phi_2] =  \tilde a\left((\alpha_{(1)}-\alpha_{(1)}^*)+\iu(\alpha_{(2)} -\alpha_{(2)}^*) \right)\Phi_1.
\end{align*}
Therefore, we can conclude that
\begin{align*} \mbox{tr}\,L \Acal_{\delta,1}^{\omega^*} \cdot (\alpha-\alpha^*)&=0,\\
 \mbox{tr}\,[L\Acal_{\delta,1}^{\omega^*} \cdot (\alpha-\alpha^*)]^2 &= 2\tilde ab |\alpha-\alpha^*|^2.
\end{align*}
This, together with  \eqnref{eq:first} and \eqnref{eq:second}, proves equations \eqnref{eq:firsttoprove} and \eqnref{eq:secondtoprove}, which completes the proof. \qed 

\section{Numerical illustrations} \label{sec:num} 
We consider the bubbly honeycomb crystal as described in Section \ref{sec:setup}, and additionally assume that the bubbles are circular with radius $R$. The center-to-center distance between adjacent bubbles is assumed to be one and  the material parameters are such that $v = v_b = 1$. The lattice basis vectors are given by 
$$ l_1 = \left( 3, \sqrt{3} \right),~~l_2 = \left( 3, -\sqrt{3} \right),$$
\begin{figure}
	\centering
	\begin{tikzpicture}[scale=1.3]	
	\coordinate (a) at ({1/sqrt(3)},1);		
	\coordinate (b) at ({1/sqrt(3)},-1);
	\coordinate (c) at ({2/sqrt(3)},0);
	\coordinate (M) at ({0.5/sqrt(3)},0.5);
	\coordinate (K1) at ({1/sqrt(3)},{1/3});
	\coordinate (K2) at ({1/sqrt(3)},{-1/3});
	\coordinate (K3) at (0,{-2/3});
	\coordinate (K4) at ({-1/sqrt(3)},{-1/3});
	\coordinate (K5) at ({-1/sqrt(3)},{1/3});
	\coordinate (K6) at (0,{2/3});
	
	\draw[->] (0,0) -- (a) node[above]{$\alpha_1$};
	\draw[->] (0,0) -- (b) node[below]{$\alpha_2$};
	\draw (a) -- (c) -- (b) node[pos=0.4,below right]{$Y^*$};
	\draw[fill] (M) circle(1pt) node[yshift=8pt, xshift=-2pt]{$M$}; 
	\draw[fill] (0,0) circle(1pt) node[left]{$\Gamma$}; 
	\draw[fill] (K1) circle(1pt) node[right]{$K$}; 
	
	\draw[postaction={decorate}, decoration={markings, mark=at position 0.2 with {\arrow{>}}, markings, mark=at position 0.65 with {\arrow{>}}, markings, mark=at position 0.9 with {\arrow{>}}}, color=red]
	(M) -- (0,0) -- (K1) -- (M);
	\draw[opacity=0.4] (K1) -- (K2) -- (K3) -- (K4) -- (K5) -- (K6) -- cycle; 
	\end{tikzpicture}
	\caption{Symmetry points in the reciprocal space, and the path along which the band structure is numerically computed.} \label{fig:BZband}
\end{figure}
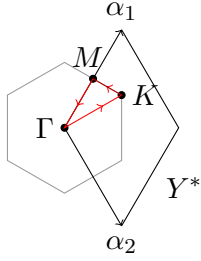
and the reciprocal basis vectors are defined as
$$ \alpha_1 = 2\pi\left( \frac{1}{6}, \frac{1}{2\sqrt{3}} \right),~~\alpha_2 = 2\pi\left(\frac{1}{6}, -\frac{1}{2\sqrt{3}} \right).$$
Using the same notation as in Section \ref{sec:setup}, this corresponds to $a = 2\sqrt{3}$. We also define the symmetry points in the reciprocal space as follows:
$$
\Gamma  = (0,0), \quad K = \frac{2\alpha_1 + \alpha_2}{3} ,  \quad M = \frac{\alpha_1}{2}.
$$
	
We use the multipole expansion method to compute the band diagrams \cite{AFLYZ}. Because $D$ has two connected components $D_1$ and $D_2$, we can identify $L^2(\partial D) = L^2(\partial D_1)\times L^2(\partial D_2)$. This gives the following matrix expressions of $\Scal_D^{\alpha,k}$ and $(\Kcal_D^{-\alpha,k})^*$:
\begin{align*}
\Scal_D^{\alpha,k}[\phi] = \begin{pmatrix}
\Scal_{D_1}^{\alpha,k} & \Scal_{D_2}^{\alpha,k} \\
\Scal_{D_1}^{\alpha,k} & \Scal_{D_2}^{\alpha,k}
\end{pmatrix} \begin{pmatrix}
\phi_{(1)} \\ \phi_{(2)}
\end{pmatrix},\quad 
(\Kcal_D^{-\alpha,k})^*[\phi] = \begin{pmatrix}
(\Kcal_{D_1}^{-\alpha,k})^* & \frac{\p}{\p \nu}\Scal_{D_2}^{\alpha,k}  \\
\frac{\p}{\p \nu}\Scal_{D_1}^{\alpha,k} & (\Kcal_{D_2}^{-\alpha,k})^*
\end{pmatrix} \begin{pmatrix}
\phi_{(1)} \\ \phi_{(2)}
\end{pmatrix}.
\end{align*}
Here, $\phi \in L^2(\p D)$ is represented by $\phi = \left( \begin{smallmatrix} \phi_{(1)} \\ \phi_{(2)} \end{smallmatrix} \right) \in L^2(\p D_1) \times L^2(\p D_2)$. Using these expressions, the integral operator $\Acal_\delta^{\alpha,\omega}$ defined in equation \eqnref{eq:A} can be discretised with the multipole expansion method as described in \cite[Appendix C]{AFLYZ}. We consider the band structure along the line $M\Gamma KM$, illustrated in Figure \ref{fig:BZband}, in the following numerical examples:
\begin{itemize}

\item[(i)] 
(Dilute regime).
We set $R=1/50$ and $\delta=1/9000$. The band structure is given in Figure \ref{fig:honeycomb_R_1_50}. The left subfigure shows the first four bands. The right subfigure shows the first two bands, which correspond to subwavelength curves and which cross at $K$. Observe that the crossing is a linear dispersion which means that it signifies a Dirac point.

\item[(ii)] 
(Non-dilute regime)
We set $R=1/5$ and $\delta=1/1000$. The band structure is given in Figure \ref{fig:honeycomb_R_1_5}. In this non-dilute regime, there is still a Dirac cone at the point $K$.

\end{itemize}

\begin{figure} [!h]
\begin{center}
 \includegraphics[height=5.0cm]{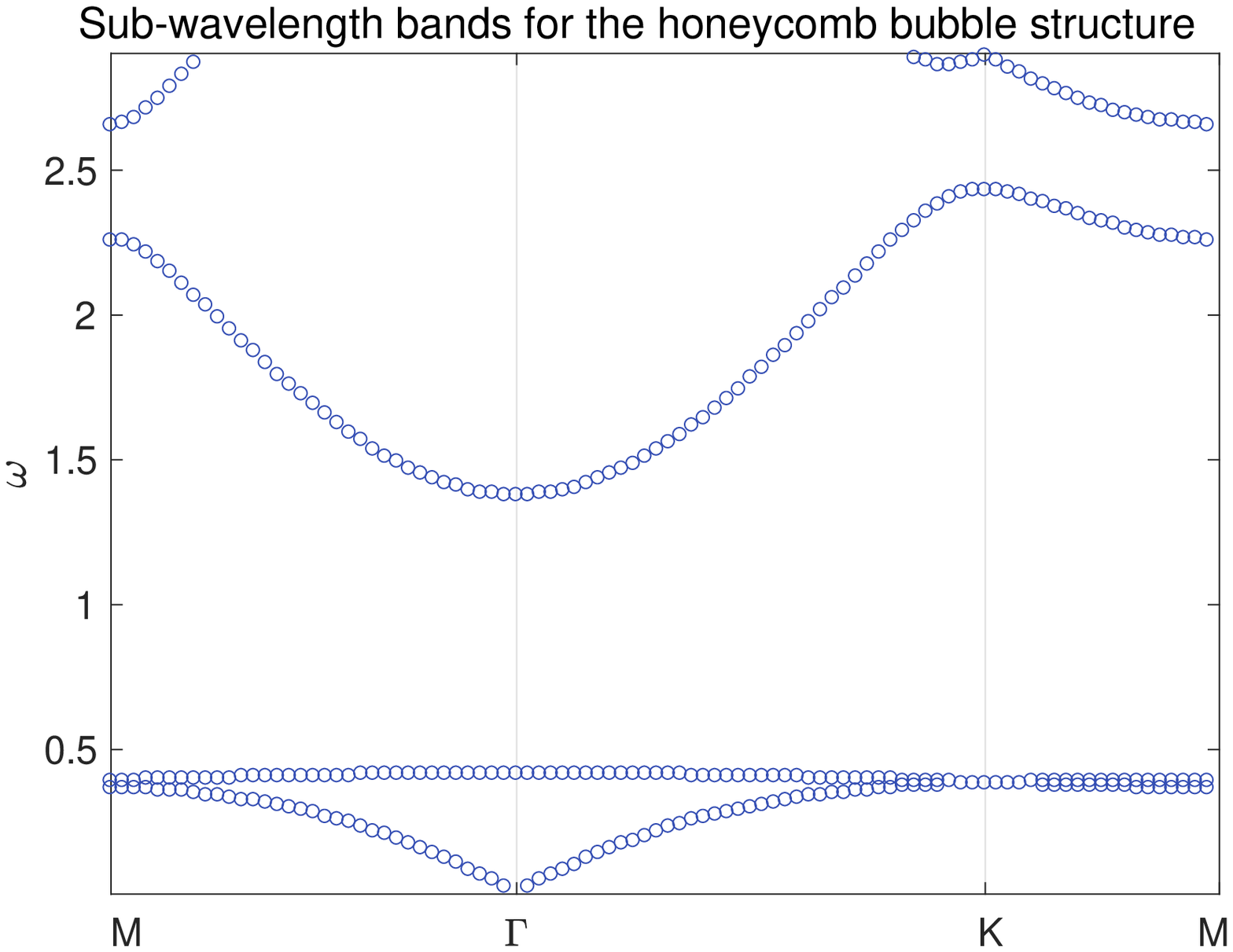}
 \hskip-.5cm
  \includegraphics[height=5.0cm]{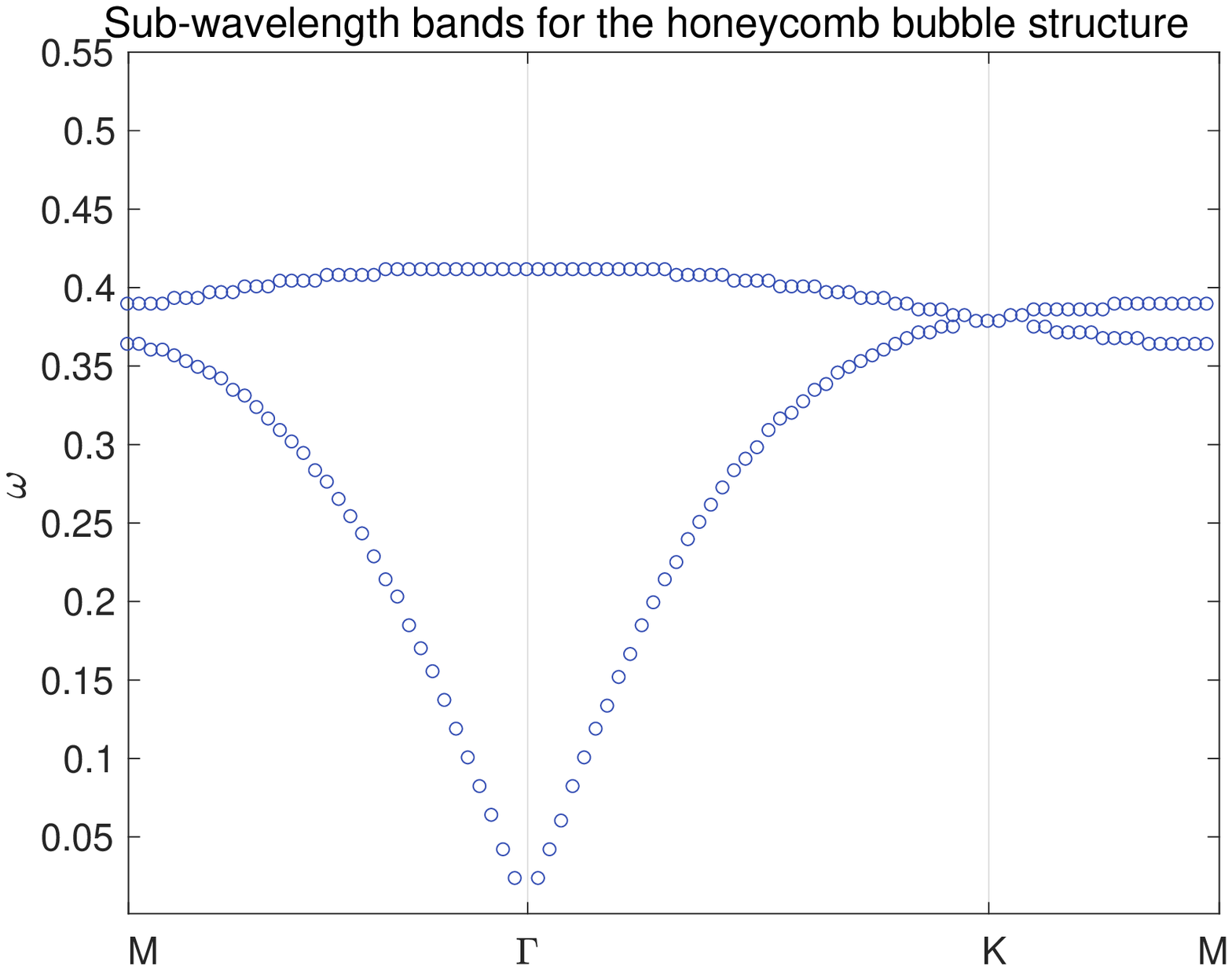}
   \caption{ 
  (left) The band structure of a bubbly honeycomb phononic crystal  with $R=1/50$ and $\delta = 1/9000$. The distance between the adjacent bubbles is one; (right) The band structure upon zooming in on the subwavelength region.}
  \label{fig:honeycomb_R_1_50}
\end{center}
\end{figure}

\begin{figure} [!h]
\begin{center}
 \includegraphics[height=5.0cm]{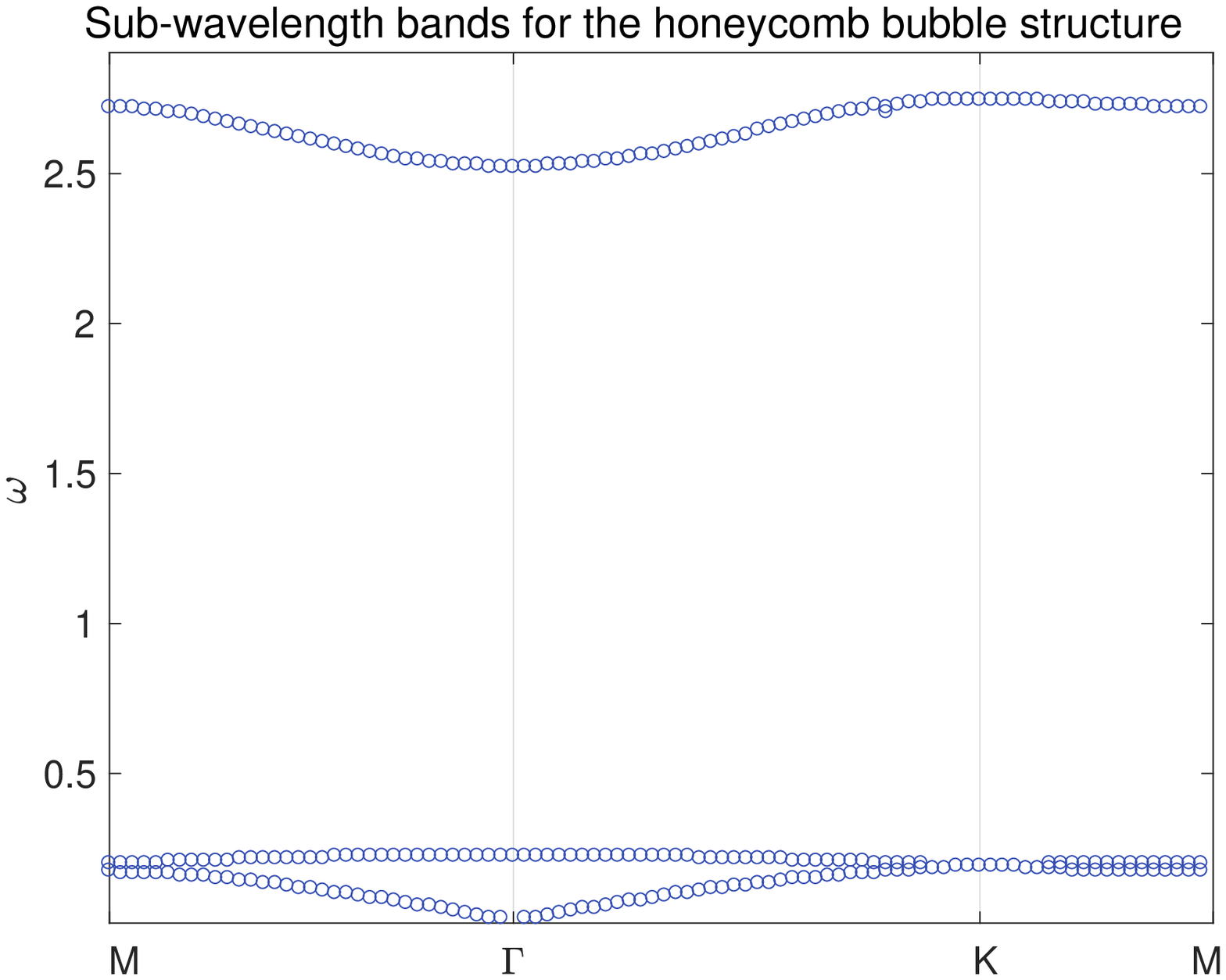}
 \hskip-.5cm
  \includegraphics[height=5.0cm]{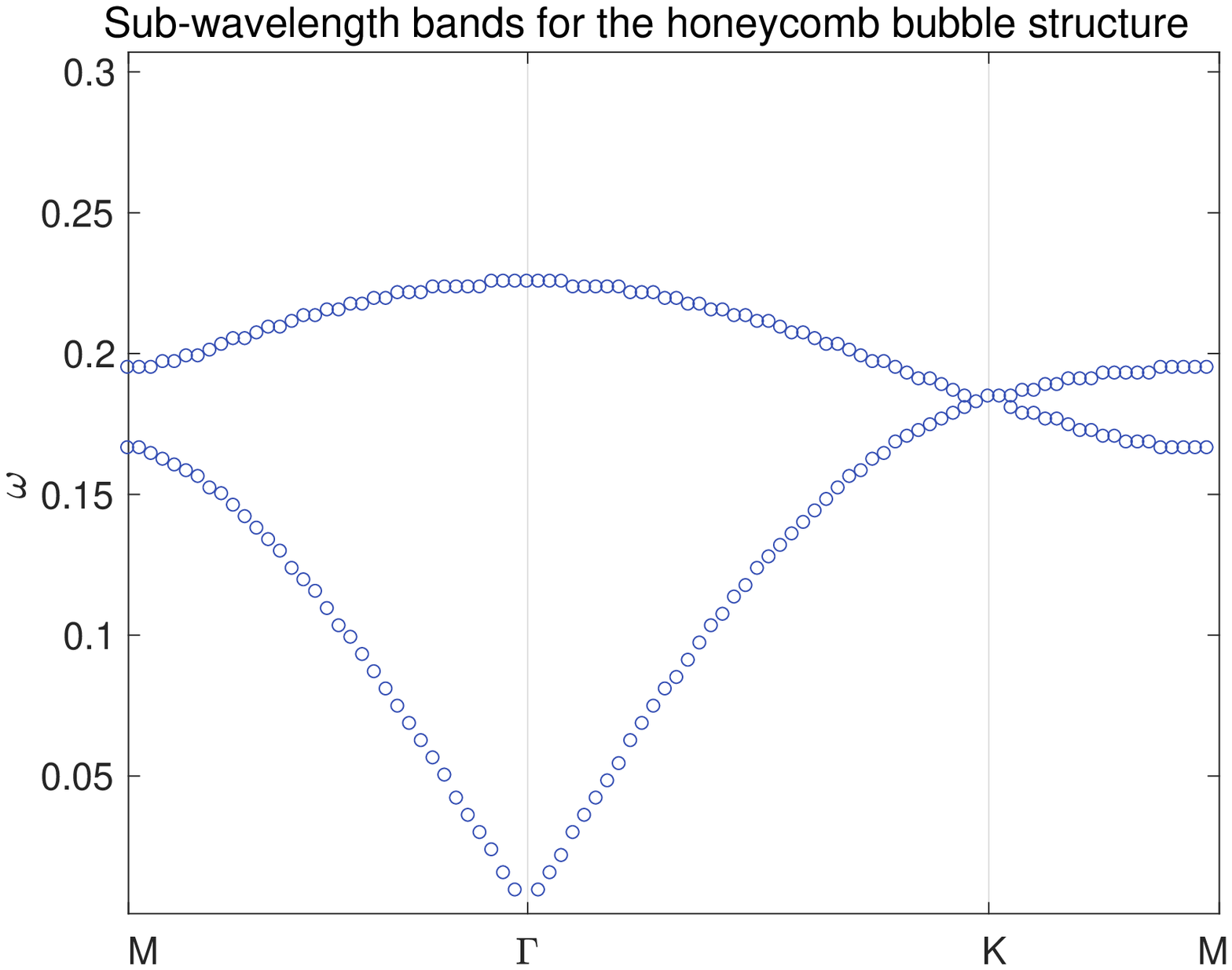}
   \caption{ 
  (left)   The band structure of a bubbly honeycomb phononic crystal  with  $R=1/5$ and $\delta = 1/1000$. The distance between the adjacent bubbles is one; (right) The band structure upon zooming in on the subwavelength region.}
  \label{fig:honeycomb_R_1_5}
\end{center}
\end{figure}

\section{Concluding remarks} \label{sec:conclusion}
In this paper, we have rigorously proven the existence of a Dirac cone in the subwavelength regime in a bubbly phononic crystal with a honeycomb lattice structure. We have illustrated our main results with different numerical experiments. In view of the recent results in \cite{pierre,matias1,matias2}, our original approach in this work can be extended to plasmonics. 
In future works, we plan to further study topological phenomena in bubbly crystals. In particular, we will rigorously show the existence of localized edge states at the surface of a topologically non-trivial bubbly crystal. Similar to \cite{homogenization}, a high-frequency homogenization of a bubbly honeycomb phononic crystal is performed in \cite{arma_bubble}.


\begin{thebibliography}{99}

%\bibitem{inv} {H. Ammari, G. Ciraolo, H. Kang, H. Lee, and G. Milton}. Spectral theory of a Neumann-Poincar\'e-type operator and analysis of
%cloaking due to anomalous localized resonance, Arch. Ration. Mech. Anal., 208:2 (2013), 667--692.
%
\bibitem{pierre}
H. Ammari, Y. Deng, and P. Millien.
Surface plasmon resonance of nanoparticles and applications in imaging, Arch. Ration. Mech. An., 220 (2016), 109--153. 

\bibitem{prsa}
H.~Ammari,  B. Fitzpatrick, D.~Gontier, H.~Lee, and H.~Zhang.
\newblock  Sub-wavelength focusing of acoustic waves in bubbly media. 
\newblock Proc. A. 473 (2017), no. 2208, 20170469, 17 pp.

\bibitem{H3a}
H.~Ammari,  B. Fitzpatrick, D.~Gontier, H.~Lee, and H.~Zhang.
\newblock {M}innaert resonances for acoustic waves in bubbly media.
\newblock {Ann. I. H. Poincare-An.}, 
\newblock{35 (2018), 1975--1998}. 

\bibitem{ammari2017double}
H.~Ammari, B.~Fitzpatrick, H.~Lee, S.~Yu, and H.~Zhang.
\newblock Double-negative acoustic metamaterials.
\newblock {\em Q. Appl. Math.}, 77(1):105--130, 2019.

\bibitem{surv235} H. Ammari, B. Fitzpatrick, H. Kang, M. Ruiz, S. Yu, and H. Zhang. \newblock {\em Mathematical and Computational Methods in Photonics and Phononics}, Mathematical Surveys and Monographs, Vol. 235, American Mathematical Society, Providence, 2018.

\bibitem{AFLYZ} H. Ammari, B. Fitzpatrick, H. Lee, S. Yu, and H. Zhang. Subwavelength phononic bandgap opening in bubbly media. J. Diff. Equat., 263 (2017), 5610--5629.


\bibitem{Ammari_David} H. Ammari, B. Fitzpatrick, D. Gontier and H. Lee and H. Zhang.
A mathematical and numerical framework for bubble meta-screens.
SIAM J. Appl. Math., 77(5) 2017, 1827--1850.

\bibitem{defect} H. Ammari, B. Fitzpatrick, E.O. Hiltunen and S. Yu. Subwavelength localized modes for acoustic waves in bubbly crystals with a defect. SIAM J. Appl. Math., 
78 (2018),  3316--3335.


%\bibitem{hamdache} H. Ammari, K. Hamdache, and J.C. N\'ed\'elec. 
%Chirality in the Maxwell equations by the dipole approximation, 
%59 (1999), SIAM J. Appl. Math., 2045--2059.
%
%
%\bibitem{book2} {H. Ammari and H. Kang}. \textsl{Polarization and Moment
%	Tensors with Applications to Inverse Problems and Effective Medium
%	Theory}, Applied Mathematical Sciences, Vol. 162, Springer-Verlag,
%New York, 2007.


\bibitem{arma_bubble} H. Ammari, E.O. Hiltunen, and S. Yu, 
A high-frequency homogenization approach near the Dirac points in bubbly honeycomb crystals, arXiv:1812.06178 (to appear in Arch. Ration. Mech. An.). 

\bibitem{akl}
H.~Ammari, H.~Kang, and H.~Lee.
\newblock {\em Layer Potential Techniques in Spectral Analysis}, Mathematical Surveys and Monographs, Vol. 153,
\newblock American Mathematical Society, Providence, 2009.

%\bibitem{arma}
%H.~Ammari, H.~Kang, and H.~Lee.
%\newblock Asymptotic analysis of high-contrast phononic crystals and a
%criterion for the band-gap opening.
%\newblock {Arch. Ration. Mech. Anal.}, 193 (2009), 679--714.

\bibitem{homogenization} H.~Ammari, H.~Lee, and H. Zhang. Bloch waves in bubbly crystal near the first band gap: a high-frequency homogenization approach. SIAM J. Math. Anal., 51(1) (2019), 45-59.


\bibitem{matias1}
H.~Ammari, P.~Millien, M.~Ruiz, and H.~Zhang.
\newblock Mathematical analysis of plasmonic nanoparticles: the scalar case.
\newblock {Arch. Ration. Mech. An.}, 224 (2017), 597--658.

\bibitem{matias2}
H.~Ammari, M.~Ruiz, S.~Yu, and H.~Zhang.
\newblock Mathematical analysis of plasmonic resonances for nanoparticles: the
full Maxwell equations.
\newblock {J. Differ. Equat.}, 261 (2016), 3615--3669.

%
%\bibitem{Ammari_Hai}
%H.~Ammari and H.~Zhang.
%\newblock Effective medium theory for acoustic waves in bubbly fluids near
%Minnaert resonant frequency.
%\newblock SIAM J. Math. Anal., 49(2017), 3252--3276. 
%
%\bibitem{Ammari2015_a}
%H.~Ammari and H.~Zhang.
%\newblock Super-resolution in high-contrast media.
%\newblock { Proc. R. Soc. A}, 471 (2015), 2178.


\bibitem{jmp} J. Arbunich and C. Sparber, 
Rigorous derivation of nonlinear Dirac equations for wave propagation in honeycomb structures. 
J. Math. Phys. 59 (2018), no. 1, 011509, 18 pp. 

\bibitem{alu} S.A. Cummer, J. Christensen, and A. Al\`u. Controlling sound with acoustic metamaterials. Nat. Rev. Mater., 1 (2016), 16001. 


\bibitem{Dirac4} C.L. Fefferman, J.P. Lee-Thorp, and M.I. Weinstein. Edge states in honeycomb structures. Ann. PDE 2 (2016), no. 2, Art. 12, 80 pp.

\bibitem{Dirac5} C.L. Fefferman, J.P. Lee-Thorp, and M.I. Weinstein. Topologically protected states in one-dimensional systems. Mem. Amer. Math. Soc. 247 (2017), no. 1173. 

\bibitem{Dirac6} C.L. Fefferman, J.P. Lee-Thorp, and M.I. Weinstein. Honeycomb Schr\"odinger operators in the strong binding regime. Commun. Pur. Appl. Math. 71 (2018), no. 6, 1178--1270. 


\bibitem{Dirac7} C.L. Fefferman and M.I. Weinstein. Honeycomb lattice potentials and Dirac points. J. Amer. Math. Soc. 25 (2012), no. 4, 1169--1220.

\bibitem{Dirac8} C.L. Fefferman and M.I. Weinstein. Wave packets in honeycomb structures and two-dimensional Dirac equations. Comm. Math. Phys. 326 (2014), no. 1, 251--286. 

%\bibitem{figotin}
%A.~Figotin and P.~Kuchment.
%\newblock Spectral properties of classical waves in high-contrast periodic
%media.
%\newblock {SIAM J. Appl. Math.}, 58 (1998), 683--702.


\bibitem{rev1} N. Kaina, F. Lemoult, M. Fink, and G. Lerosey. Negative refractive index and acoustic superlens from multiple scattering in single negative metamaterials, Nature, 525 (2015), 77--81L. 



\bibitem{leroy2} M.\,Lanoy, R.\,Pierrat, F.\,Lemoult, M.\,Fink, V.\,Leroy, and A.\,Tourin. Subwavelength focusing in bubbly media using broadband time reversal. Phys. Rev. B, 91.22 (2015), 224202.

\bibitem{Dirac2018} J. P. Lee-Thorp, M. I. Weinstein, and Y. Zhu. Elliptic operators with honeycomb symmetry: Dirac Points, edge States and applications to photonic graphene, Arch. Ration. Mech. An. (2018).

\bibitem{Dirac3} M. Lee. Dirac cones for point scatterers on a honeycomb lattice. SIAM J. Math. Anal.,  48 (2016), no. 2, 1459--1488. 

%\bibitem{Lek} J. Lekner. Capacitance coefficients of two spheres, Journal of Electrostatics,
%69 (2011), 11--14.

\bibitem{fink}  F. Lemoult, N. Kaina, M. Fink, and G. Lerosey. Soda cans metamaterial: a subwavelength-scaled photonic crystal. Crystals, 6 (2016), 82. 


\bibitem{leroy1}
V.~Leroy, A.~Bretagne, M.~Fink, H.~Willaime, P.~Tabeling, and A.~Tourin.
\newblock Design and characterization of bubble phononic crystals.
\newblock {Appl. Phys. Lett.}, 95 (2009), 171904.

\bibitem{leroy3} V.\,Leroy, A.\,Strybulevych, M.\,Lanoy, F.\,Lemoult, A.\,Tourin, and J. H.\,Page. Superabsorption of acoustic waves with bubble metascreens. Phys. Rev. B, 91.2 (2015), 020301.

\bibitem{rev2} G. Ma and P. Sheng. Acoustic metamaterials: From local resonances to broad horizons, Sci. Adv., 2 (2016), e1501595. 



%\bibitem{milton2007} G.\,W.\,Milton, N.\,A.\,P.\,Nicorovici, and R.\,C.\,McPhedran. Opaque perfect lenses. Phys. B, 394 (2007), 171--175.

\bibitem{Minnaert1933}
M.~Minnaert.
\newblock  {O}n musical air-bubbles and the sounds of running water.
\newblock {The London, Edinburgh, Dublin Philos. Mag. and J. of Sci.},
16 (1933), 235--248.


%\bibitem{milton1994} N.\,A.\,Nicorovici, R.\,C.\,McPhedran, and G.\,W.\,Milton. Optical and dielectric properties of partially resonant composites. Phys. Rev. B, 49.12 (1994), 8479--8482.
%
%\bibitem{pendry2000} J.\,B.\,Pendry. Negative refraction makes a perfect lens. Phys. Rev. Lett., 85 (2000), 3966--3969.

%\bibitem{revR} S.A. Ramakrishna. Physics of negative refractive index materials, 
%Rep. Progr. Phys., 68 (2005), 449. 

\bibitem{tightbind2}
S.~Reich, J.~Maultzsch, C.~Thomsen, and P.~Ordej\'on.
\newblock Tight-binding description of graphene.
\newblock {\em Phys. Rev. B}, 66:035412, Jul 2002.

\bibitem{torrent}
D.~Torrent and J.~S\'anchez-Dehesa.
\newblock Acoustic analogue of graphene: Observation of dirac cones in acoustic
surface waves.
\newblock {\em Phys. Rev. Lett.}, 108:174301, Apr 2012.

\bibitem{NZ2012}
F.~Liu, X.~Huang, and C.~T. Chan.
\newblock Dirac cones at $k=0$ in acoustic crystals and zero refractive index
acoustic materials.
\newblock {Appl. Phys. Lett.}, 100 (2012), 071911.

\bibitem{dai2017dirac}
H.~Dai, B.~Xia, and D.~Yu.
\newblock Dirac cones in two-dimensional acoustic metamaterials.
\newblock {\em J. of Appl. Phys.}, 122(6):065103, 2017.

\bibitem{zheng2020acoustic}
L.-Y. Zheng, V.~Achilleos, Z.-G. Chen, O.~Richoux, G.~Theocharis, Y.~Wu,
J.~Mei, S.~Felix, V.~Tournat, and V.~Pagneux.
\newblock Acoustic graphene network loaded with helmholtz resonators: a
first-principle modeling, dirac cones, edge and interface waves.
\newblock {\em New J. Phys.}, 22(1):013029, jan 2020.

\bibitem{tightbind1}
P.~R. Wallace.
\newblock The band theory of graphite.
\newblock {\em Phys. Rev.}, 71:622--634, May 1947.

\bibitem{Dirac1} S. Yves, F. Lemoult, M. Fink, and G. Lerosey.   Crystalline Soda Can Metamaterial exhibiting Graphene-like Dispersion at subwavelength scale.
Sci. Rep., 7 (2017),  15359. 


\bibitem{Dirac2} S. Yves, R. Fleury, F. Lemoult, M. Fink, and G. Lerosey.  
Topological acoustic polaritons: robust sound manipulation at the subwavelength scale. 
New J. Phys., 19 (2017), 075003. 

\bibitem{yves2017crytalline}
S.~Yves, R.~Fleury, T.~Berthelot, M.~Fink, F.~Lemoult, and G.~Lerosey.
\newblock Crystalline metamaterials for topological properties at subwavelength
scales.
\newblock {\em Nat. Commun.}, 8(1):16023, Jul 2017.

\bibitem{yves2018measuring}
S.~Yves, T.~Berthelot, M.~Fink, G.~Lerosey, and F.~Lemoult.
\newblock Measuring dirac cones in a subwavelength metamaterial.
\newblock {\em Phys. Rev. Lett.}, 121:267601, Dec 2018.

\bibitem{wu2018perspective}
Y.~Wu, M.~Yang, and P.~Sheng.
\newblock Perspective: Acoustic metamaterials in transition.
\newblock {\em J. Appl. Phys.}, 123(9):090901, 2018.

\bibitem{rev3} R. Zhu, X.N. Liu, G.K. Hu, C.T. Sun, and G.L. Huang. Negative refraction of elastic waves at the deep-subwavelength scale in a single-phase metamaterial. Nat. Commun., 5(2014), 5510.

\bibitem{Gohberg1971}
I.C. Gohberg and E.I. Sigal.
\newblock An operator generalization of the logarithmic residue theorem and the
theorem of {R}ouch\'{e}.
\newblock {\em Sb. Math.}, 13(4):603--625, 1971.


\end{thebibliography}
\end{document}